\declaretheoremstyle[headfont=\normalsize\normalfont\bfseries,notefont=\mdseries,
notebraces={(}{)},bodyfont=\normalfont,postheadspace=0.5em]{basicstyle}
\declaretheoremstyle[headfont=\normalsize\normalfont\bfseries,notefont=\mdseries,
notebraces={(}{)},bodyfont=\normalfont\itshape,postheadspace=0.5em]{italstyle}
\declaretheorem[name=Definition,style=basicstyle]{defn}
\declaretheorem[style=italstyle,name=Theorem]{theorem}
\declaretheorem[style=italstyle,name=Question]{question}
\declaretheorem[style=italstyle,name=Corollary,numbered=no]{cor}
\declaretheorem[style=italstyle,name=Proposition,sibling=theorem]{prop}
\declaretheorem[style=italstyle,name=Conjecture,sibling=question]{conj}
\declaretheorem[style=italstyle,name=Lemma,sibling=theorem]{lemma}
\renewenvironment{proof}{\preproof}{\endpreproof}
\newcommand{\abs}[1]{\left|#1\right|}
\newcommand{\bd}{\partial}
\newcommand{\C}{\mathbb{C}}
\renewcommand{\d}{\mathrm{d}}
\newcommand{\id}{\mathrm{id}}
\newcommand{\intprod}{\mathbin{{\tikz{\draw(-0.1,0)--(0.1,0)--(0.1,0.2)}\hspace{0.5mm}}}}
\newcommand{\pd}[2]{\frac{\partial #1}{\partial #2}}
\newcommand{\pr}{\mathrm{pr}}
\newcommand{\R}{\mathbb{R}}
\renewcommand{\setminus}{\mathbin{\tikz[baseline={(0,0.033)}]{\draw[line width=0.4, rounded corners=0.5,line cap=round,scale=0.7] (0,0.3)--(0.25,0.1);}}}
\newcommand{\set}[1]{\left\{#1\right\}}
\renewcommand{\subsection}{\@startsection{subsection}{2}%
  \z@{.5\linespacing\@plus.7\linespacing}{-.5em}%
  {\normalfont\itshape}}
\newcommand{\ud}[2]{\frac{\mathrm{d} #1}{\mathrm{d} #2}}
\newcommand{\Z}{\mathbb{Z}}
\title{Remarks on the Oscillation Energy of Legendrian Isotopies}
\author{Dylan Cant}
\date{\today}
\begin{document}
\maketitle

\begin{abstract}
  We construct non-compact contact manifolds containing compact Legendrians which can be displaced from their Reeb flow with arbitrarily small oscillation energy. We use this to show the Shelukhin-Chekanov-Hofer pseudo-metric considered by \cite{zhang_rosen} is degenerate on the isotopy class of the constructed Legendrians. Other aspects related to oscillation energy of Legendrian isotopies are explored.
\end{abstract}

\section{Introduction}
\label{sec:intro}

A \emph{Legendrian} in a contact manifold $(Y,\xi)$ is a submanifold $\Lambda^{n}\subset Y^{2n+1}$ which is tangent to the contact distribution $\xi$. A Legendrian isotopy is a one-parameter family of Legendrian submanifolds. Given a choice of contact form $\alpha$ for $(Y,\xi)$, we will associate quantitative measurements to Legendrian isotopies, modeled on the measurements introduced by \cite{shelukhin_contactomorphism} for contactomorphisms. Similar quantitative measurements have been studied by multiple authors, see for instance \cite{zhang_rosen,usher_local_rigidity,hedicke,oh_legendrian_entanglement,rizell_sullivan_conformal,rizell_sullivan_persistence_1,rizell_sullivan_augmentation,rizell_sullivan_pos_loops,akaho01,her07}. The measurements we consider can be considered as Legendrian versions of the Hofer energy, \cite{hofer_energy}, for exact isotopies of Lagrangians, as in \cite{polterovich_lag_displacement_energy,chekanov_1998,chekanov_2000}.

Suppose $\Lambda_{t}$, $t\in [0,1]$, is an isotopy of compact Legendrians. Pick a family of smooth embeddings $i_{t}:\Lambda\to Y$ so that $i_{t}$ parametrizes $\Lambda_{t}$. Write $i'_{t}$ for the time derivative of $i_{t}$ and define the \emph{contact Hamiltonian} $h_{i,t}:\Lambda\to \R$ by the formula:
\begin{equation}\label{eq:contact_hamiltonian}
  h_{i,t}(x)=\alpha_{i_{t}(x)}(i'_{t}(x)).
\end{equation}
We define the \emph{length} and the \emph{oscillation energy} of the isotopy to be the integrals:
\begin{equation}\label{eq:length_osc_energy}
  \begin{aligned}
    \mathrm{length}_{\alpha}(\Lambda_{t})&=\int_{0}^{1}\max_{x\in \Lambda} \abs{h_{i,t}(x)}\d t,\\
    \mathrm{energy}_{\alpha}^{\mathrm{osc}}(\Lambda_{t})&=\int_{0}^{1}\max_{x\in \Lambda} h_{i,t}(x)-\min_{x\in \Lambda}h_{i,t}(x)\d t. 
  \end{aligned}
\end{equation}
Any reparametrization of the isotopy $j_{t}=i_{t}\circ \sigma_{t}$ has $h_{j,t}=h_{i,t}\circ \sigma_{t}$, and hence these quantities are independent of the choice of parametrization. Let us write $h_{t}$ when the parametrization is clear.

Taking infima over isotopies joining $\Lambda_{0}$ to $\Lambda_{1}$ yields a (pseudo) distance function,
\begin{equation*}
  \mathrm{dist}_{\alpha}(\Lambda_{0},\Lambda_{1}):=\inf\set{\mathrm{length}_{\alpha}(\Lambda_{t}):\Lambda_{t}\text{ isotopy from $\Lambda_{0}$ to $\Lambda_{1}$}}.
\end{equation*}

Standard isotopy extension results imply that for any Legendrian isotopy $i_{t}$, there exists an ambient isotopy $\varphi_{t}$ and a reparametrization $\sigma_{t}$ so that $i_{t}=\varphi_{t}\circ i_{0}\circ \sigma_{t}$. Indeed, if one extends the functions $h_{t}$ by finding $H_{t}$ so that $h_{t}=H_{t}\circ i_{t}$, then the contactomorphism generated by $H_{t}$ will be the desired extension\footnote{One can arrange that $\sigma_{t}=\mathrm{id}$, see \cite[\S2.3]{traynor_helix_links}.} $\varphi_{t}$. Using a tubular neighborhood construction, one can arrange that:
\begin{equation*}
  \max_{y\in Y}H_{t}=\max_{x\in \Lambda}h_{t}\hspace{.3cm}\text{ and }\hspace{.3cm}\min_{y\in Y}H_{t}=\min_{x\in \Lambda} h_{t}.
\end{equation*}
This yields a comparison with the norms $\abs{\varphi}_{\alpha}$ and  $\abs{\varphi}_{\alpha}^{\mathrm{osc}}$ defined in \cite{shelukhin_contactomorphism},
\begin{equation*}
  \mathrm{dist}_{\alpha}(\Lambda_{0},\Lambda_{1})=\inf\set{\abs{\varphi}_{\alpha}:\varphi(\Lambda_{0})=\Lambda_{1}}.
\end{equation*}
The distance function $\mathrm{dist}_{\alpha}$ is called the \emph{Shelukhin-Chekanov-Hofer pseudo-metric} in \cite{zhang_rosen}. One considers it as a pseudo-metric on the isotopy class of a Legendrian. Using the dichotomy established in \cite{zhang_rosen}, the paper \cite{rizell_sullivan_augmentation} proves this pseudo-metric is non-degenerate on all isotopy classes of compact Legendrians, in the class of contact manifolds where the Chekanov-Eliashberg DGA technology\footnote{As in \cite{rizell_sullivan_pos_loops}, this includes all closed contact manifolds $Y$ and any compact codimension $0$ submanifold-with-boundary in $Y$.} is established. The non-degeneracy of the metric was established previously in \cite{usher_local_rigidity} for hypertight Legendrians, and in \cite{hedicke} for the case of \emph{orderable} Legendrian isotopy classes. One of the results in this paper is the construction of a Legendrian isotopy class for which the pseudo-metric is degenerate (Theorem \ref{theorem:nonLDR}). This degeneracy shows that \cite[Conjecture 1.11]{zhang_rosen} (that the metric is always non-degenerate) does not hold in generality; one needs to restrict the statement to a suitable class of tame contact manifolds (with suitably tame contact forms).

A contactomorphisms is called \emph{strict} if it satisfies $\phi^{*}\alpha=\alpha$. For the purposes of this note, a Legendrian isotopy $\Lambda_{t}$ is called \emph{strict} if it is induced by an isotopy of strict contactomorphisms $\phi_{t}$.

\subsection{LDR contact manifolds}
\label{sec:LDR_disjoin}

The oscillation energy is related to the problem of finding Legendrian isotopies $\Lambda_{t}$ so that there are no Reeb chords between $\Lambda_{0}$ and $\Lambda_{1}$ (henceforth we refer to this as \emph{disjoining} the Legendrian). For a certain class of contact manifolds, there is a minimum oscillation energy needed to disjoin a Legendrian. This can be formalized with the following definition:

\begin{defn}\label{defn:LDR}
  A contact manifold $(Y,\alpha)$ is \emph{Legendrian disjoinment rigid} (LDR) if for all isotopies $\Lambda_{t}$:
  \begin{equation}\label{eq:LDR}
    \text{energy}^{\mathrm{osc}}_{\alpha}(\Lambda_{t})<C(\alpha,\Lambda_{0})\implies \text{there is a Reeb chord between $\Lambda_{0}$ and $\Lambda_{1}$,}
  \end{equation}
  where $C(\alpha,\Lambda_{0})$ is some positive constant depending on $\alpha$ and $\Lambda_{0}$. In words, in an LDR contact manifold, isotopies require a minimum amount of oscillation energy to disjoin a given Legendrian. It is important to note that the LDR notion depends on the choice of form, see \S\ref{sec:LDR_comment} for further discussion. 
\end{defn}
In an analogous fashion, $(Y,\alpha)$ is \emph{strict LDR} if it satisfies \eqref{eq:LDR} for all strict contact isotopies.

Let us say that a contact manifold $Y$ is \emph{quasi-fillable} if $SY$ can be symplectically embedded into a tame symplectic manifold $X$. We say that $Y$ is \emph{aspherically quasi-fillable} if we can take $X$ with $\pi_{2}(X,SY)=0$, as in \cite{klaus_chord}. Our first result is:

\begin{theorem}\label{theorem:mohnke}
  Quasi-fillable contact manifolds $Y$ are strict LDR for any choice of contact form $\alpha$. If $Y$ is aspherically quasi-fillable and the contact form has a complete Reeb flow, then we can take $C(\Lambda,\alpha)$ to be the minimal positive action of a Reeb chord of $\Lambda$.
\end{theorem}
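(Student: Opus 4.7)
The plan is to lift the strict contact isotopy to the symplectization $SY$, extend it to a compactly supported Hamiltonian isotopy of the filling $X$, and then run a Mohnke-style holomorphic-curve argument to extract a Reeb chord of $\Lambda_0$ whose action is bounded above by the oscillation energy of the isotopy.

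First I would lift the strict isotopy $\phi_t$ (with contact Hamiltonian $h_t$) to $\tilde\phi_t(r,y)=(r,\phi_t(y))$ on the symplectization $SY=(0,\infty)_r\times Y$ with symplectic form $d(r\alpha)$; this is the Hamiltonian flow of $\tilde H_t(r,y)=r\,h_t(y)$, and the strict hypothesis is exactly what makes this slice-preserving lift Hamiltonian. Cutting off in $r$ near a compact slab containing the hypersurface $\{1\}\times Y$ yields a compactly supported Hamiltonian $K_t$ on $X$ whose Hofer oscillation can be made arbitrarily close to $\mathrm{energy}^{\mathrm{osc}}_\alpha(\phi_t)$.

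Next I would encode $\Lambda_0$ as a properly embedded exact Lagrangian $L\subset X$, obtained by capping the Lagrangian half-cone $(0,1]\times\Lambda_0\subset SY$ with a smooth Lagrangian cap built using the filling. In the aspherical case, the hypothesis $\pi_2(X,SY)=0$ rules out disk bubbling into $X$, so $L$ behaves monotonically from the viewpoint of holomorphic disks.

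I would then run a filling-by-holomorphic-disks argument in the spirit of \cite{klaus_chord}, combined with SFT-style neck-stretching along the hypersurface $\{1\}\times Y$. If there were no Reeb chord between $\Lambda_0$ and $\phi_1(\Lambda_0)$, then a family of holomorphic disks with boundary on $L\cup\Phi_1(L)$ (produced by the standard point-constraint technique in the capped region) would, by Gromov--SFT compactness, degenerate onto a Reeb chord of $\Lambda_0$ to itself, of action bounded above by the Hofer energy of $K_t$. Comparing with the minimum positive chord action yields $\mathrm{energy}^{\mathrm{osc}}_\alpha(\phi_t)\geq C(\alpha,\Lambda_0)$: the general quasi-fillable case only gives some positive $C>0$ extracted from compactness of the moduli space, while the aspherically quasi-fillable case with complete Reeb flow allows the area--action comparison to be made sharp.

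The main obstacle will be handling the holomorphic disks near the cutoff region, where $K_t$ no longer equals the honest symplectization lift and where spurious area could in principle be absorbed; the asphericity and completeness assumptions feed into a maximum-principle argument that confines the disks to the symplectization layer, thereby isolating the Reeb-chord contribution to the energy budget and delivering the quantitative estimate.
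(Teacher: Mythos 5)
Your proposal diverges from the paper's argument in a way that exposes a genuine gap. You propose to ``cap the Lagrangian half-cone $(0,1]\times\Lambda_0\subset SY$ with a smooth Lagrangian cap built using the filling,'' but the hypothesis \emph{quasi-fillable} only asserts that $SY$ embeds symplectically into some tame $X$; it does not give a filling of $Y$ in the usual sense, and there is no reason a Lagrangian cap for the cone over $\Lambda_0$ should exist inside $X$. (Indeed, Lagrangian caps exist only under restrictive hypotheses, which is the whole point of the $h$-principle literature the paper cites elsewhere.) The paper avoids this issue entirely by replacing $\Lambda_0$ with Mohnke's \emph{compact Lagrangian tetragon} $L_{k,\delta}(\Lambda)$, obtained by flowing $\Lambda_0$ by the Reeb and Liouville flows and smoothing the boundary rectangle; this Lagrangian lives entirely inside $SY$ and needs no cap. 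The key lemma is then that a strict disjoinment of $\Lambda_0$ forces an equivariant lift to \emph{displace} $L_{k,\delta}(\Lambda)$, and the chord estimate is deduced from Chekanov's black-box theorem bounding displacement energy below by the minimal holomorphic-disk area $\sigma(X,L_{k,\delta}(\Lambda))$, not by running a filling-by-disks/neck-stretching argument from scratch. The other quantitative input you hand-wave past (``cutting off in $r$ ... arbitrarily close to'') is Shelukhin's cut-off proposition giving a compactly supported Hamiltonian isotopy whose Hofer oscillation is at most $e^{\delta} k E$; combined with $\sigma = e^{-\delta}(k-1)T$ in the aspherical case, taking $k\to\infty$, $\delta\to 0$ produces the contradiction when $E<T$. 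In short: your route founders on the non-existent Lagrangian cap and reinvents the holomorphic-curve analysis that the paper outsources to Chekanov; you should instead use a Lagrangian built intrinsically in $SY$ (the tetragon) so that only \emph{compact exact Lagrangians in a tame ambient} and their displacement energy are needed.
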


Results such as this, guaranteeing the existence of chords subject to a bound on the oscillation energy, can be considered as a contact version of Chekanov's result \cite{chekanov_1998} on the persistence of Lagrangian intersections. See \cite[Theorem 1.2]{rizell_sullivan_conformal}, \cite[Theorem 1.1]{rizell_sullivan_persistence_1}, \cite[Theorem 1.13]{oh_legendrian_entanglement}, \cite{rizell_sullivan_augmentation} for results in this vein.

The persistence of Reeb chords appears earlier in the literature with \cite{EHS95}, \cite{ono_lag_leg}, where the authors prove that, for certain data $(Y,\alpha,\Lambda_{0})$, and arbitrary isotopies $\Lambda_{t}$, there are Reeb chords between $\Lambda_{0}$ and $\Lambda_{1}$. The authors do not use the language of Reeb chords, but rather of intersection points of $\Lambda_{t}$ with a \emph{pre-Lagrangian} $L$ containing $\Lambda_{0}$. In fact, $L$ is foliated by Legendrians, and $\Lambda_{0}$ is a particular leaf. In the cases considered one can pick a contact form whose Reeb flow preserves $L$ and induces a free $S^{1}$-action so that $\Lambda_{0}$ is a section of the action. The authors prove that intersections between $\Lambda_{0}$ and $L$ persist under arbitrary contact isotopies of $\Lambda_{0}$. The proofs are based on two Floer homology theories counting intersection points between $S\Lambda_{0}$ and $L$ in $SY$. In similar settings, the papers \cite{akaho01}, \cite{her07} prove persistence results which incorporate the oscillation energy.

Note that \cite{chekanovQF}'s result on Legendrian isotopies of the zero-section in $1$-jet space can be interpreted as a persistence of Reeb chords.

\subsubsection{On strict LDR versus general LDR and short chords}
\label{sec:strict}

The strictness of the isotopy $\Lambda_{t}$ is a technical requirement in our geometric argument for Theorem~\ref{theorem:mohnke}. We require a lower bound on the length of Reeb chords of $\Lambda_{t}$ depending only on $\Lambda_{0}$. For general isotopies no such lower bound exists, as chords of arbitrarily short length can be born. The appearance of short chords also poses a problem when considering holomorphic curves in symplectizations.

As an example, consider counts of holomorphic strips in $SY$ with moving boundary conditions (i.e., boundary values on the Lagrangians $\set{S\Lambda_{t}:t\in [0,1]}$ in a prescribed way, e.g., $u(s,0)\in S\Lambda_{0}$, $u(s,1)\in S\Lambda_{f(s)}$, for some $f:\R\to [0,1]$). To obtain compactness of such moduli spaces, one needs to handle bubbling (in the sense of \cite{BEHWZ}) along chords of \emph{any} of the intermediate Legendrians $\Lambda_{t}$. Thus, the shortest chords which appear during the isotopy are relevant to the study of such moduli spaces. The work of \cite{oh_legendrian_entanglement} establishes a priori estimates for the $\d\alpha$-energy of such holomorphic curves in terms of the oscillation energy of isotopies. 

In another direction, one can consider \emph{Lagrangian cobordisms} $L$ in $SY$ interpolating between $\Lambda_{0}$ and $\Lambda_{1}$. The papers \cite{sabloff_traynor_length,ekholm_honda_kalman} explain how to use an isotopy $\Lambda_{t}$ to construct such a cobordism. The length of the shortest Reeb chord of $\Lambda_{t}$ determines the ``length'' of the constructed cobordism.\footnote{Recall that, using the contact form $\alpha$ to provide a vertical coordinate $s_{\alpha}:SY\to \R$, the \emph{length} of a Lagrangian $L$ is the length of the minimal interval $I$ so that $L$ is cylindrical (tangent to the Liouville field) in the region where $s_{\alpha}\not\in I$.} The length of the cobordism then appears in a priori estimates for holomorphic curves with boundary on $L$. This is the approach taken in \cite{rizell_sullivan_conformal}.

\subsection{Non-LDR contact manifolds}
\label{sec:non-LDR}

There are contact manifolds which are not strict LDR (and thus are also not LDR). The construction is a familiar one: given two contact manifolds $Y_{0},Y_{1}$ and a choice of form $\alpha_{0}$ for $Y_{0}$ we define the (generalized) \emph{contactization} to be the product $Q=Y_{0}\times SY_{1}$ with the contact form $A=\alpha_{0}-\lambda_{1}$ (often this is considered with $Y_{0}=\R$ or $S^{1}$). 
\begin{theorem}\label{theorem:nonLDR}
  For certain exotic choices of $Y_{1}$, and arbitrary $Y_{0}$, the contactizations $(Q,A)$ contain compact Legendrians $\Lambda$ which can be disjoined by strict contact isotopies with arbitrarily small oscillation energy. Moreover, the Shelukhin-Chekanov-Hofer pseudo-metric for the contact form $A$ is degenerate on the space of Legendrians isotopic to $\Lambda$.
\end{theorem}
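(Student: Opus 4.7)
The strategy is to reduce both assertions to the existence, for an exotic $Y_{1}$, of a compact Lagrangian $L_{1}\subset SY_{1}$ with $\lambda_{1}|_{L_{1}}=0$ admitting Hamiltonian isotopies of arbitrarily small Hofer oscillation energy which (i) displace $L_{1}$ from itself in $SY_{1}$, and (ii) carry $L_{1}$ to a fixed distinct exact Lagrangian $L_{1}'$ with $\lambda_{1}|_{L_{1}'}=0$. Given such $L_{1}$, take any compact Legendrian $\Lambda_{0}\subset Y_{0}$ and set $\Lambda=\Lambda_{0}\times L_{1}\subset Q$; the Legendrian condition $A|_{\Lambda}=0$ follows immediately from $\alpha_{0}|_{\Lambda_{0}}=0$ and $\lambda_{1}|_{L_{1}}=0$.

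Given a Hamiltonian isotopy $\phi_{t}$ of $SY_{1}$ generated by $h^{\mathrm{sym}}_{t}$, with primitives $G_{t}$ determined by $\phi_{t}^{*}\lambda_{1}-\lambda_{1}=\d G_{t}$, I would lift it to the map
\begin{equation*}
\tilde\phi_{t}(y,m)=\bigl(\phi^{G_{t}(m)}_{R_{\alpha_{0}}}(y),\,\phi_{t}(m)\bigr),
\end{equation*}
where $R_{\alpha_{0}}$ is the Reeb field of $\alpha_{0}$. Since $\phi^{t}_{R_{\alpha_{0}}}$ preserves $\alpha_{0}$ and $\alpha_{0}(R_{\alpha_{0}})=1$, a short computation gives $\tilde\phi_{t}^{*}\alpha_{0}=\alpha_{0}+\d G_{t}$, and because $\tilde\phi_{t}^{*}\lambda_{1}=\phi_{t}^{*}\lambda_{1}=\lambda_{1}+\d G_{t}$ the two exact pieces cancel to give $\tilde\phi_{t}^{*}A=A$; so $\tilde\phi_{t}$ is a strict contact isotopy. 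Differentiating the primitive identity in $t$ and invoking Cartan's formula for $L_{X_{h^{\mathrm{sym}}_{t}}}\lambda_{1}$ yields $\lambda_{1}(X_{h^{\mathrm{sym}}_{t}})-h^{\mathrm{sym}}_{t}=\dot G_{t}\circ\phi_{t}^{-1}+c(t)$, and substituting into $A(\dot{\tilde\phi}_{t})$ collapses the contact Hamiltonian of $\tilde\phi_{t}$ to $H_{t}=-h^{\mathrm{sym}}_{t}-c(t)$, viewed as a function on $Q$ depending only on the $SY_{1}$ factor. The normalization $c(t)$ can be adjusted freely by shifting $G_{t}$ by a time-dependent constant, which shifts $H_{t}$ by the same constant and twists the lift by a Reeb flow in the $Y_{0}$ factor.

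Restricting to the Legendrian isotopy $\Lambda_{t}=\tilde\phi_{t}(\Lambda)$, the induced function is $-h^{\mathrm{sym}}_{t}|_{\phi_{t}(L_{1})}$ up to a time-dependent constant, so its oscillation energy equals the Hofer-like oscillation of the Lagrangian isotopy of $L_{1}$, and by an optimal choice of the shift the length is bounded by half the oscillation. Since the Reeb field of $A$ on $Q$ is $R_{\alpha_{0}}$ extended trivially, any Reeb chord from $\Lambda$ to $\tilde\phi_{1}(\Lambda)$ requires a common point of $L_{1}$ and $\phi_{1}(L_{1})$; combining this with the displacements in (i) produces strict contact isotopies disjoining $\Lambda$ with arbitrarily small oscillation, proving the first assertion. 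For the degeneracy of $\mathrm{dist}_{A}$, the hypothesis $\lambda_{1}|_{L_{1}'}=0$ forces $G_{1}|_{L_{1}}$ to be constant, so after normalizing that constant to zero the endpoint is exactly $\Lambda_{0}\times L_{1}'\neq\Lambda$, and the isotopies in (ii) yield strict contact isotopies from $\Lambda$ to this fixed distinct Legendrian of arbitrarily small length. The main obstacle is therefore the construction of the exotic $Y_{1}$ and $L_{1}$: one needs a symplectization admitting a compact exact Lagrangian for which Chekanov-style Hofer-rigidity fails, which presumably requires $Y_{1}$ to be overtwisted or otherwise non-tame enough that the standard pseudo-holomorphic obstructions vanish.
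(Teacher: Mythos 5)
The proposal has a fatal geometric flaw at the outset. You assume a compact Lagrangian $L_{1}\subset SY_{1}$ with $\lambda_{1}|_{L_{1}}=0$, and this is impossible. Since $\lambda_{1}=\iota_{Z}\omega$ where $Z$ is the Liouville vector field, the condition $\lambda_{1}|_{L_{1}}=0$ says $\omega(Z,v)=0$ for all $v\in TL_{1}$; because $L_{1}$ is Lagrangian, $(TL_{1})^{\omega}=TL_{1}$, so $Z$ is everywhere tangent to $L_{1}$. Hence $L_{1}$ is invariant under the (free, proper, non-compact) Liouville $\R$-action, which is incompatible with $L_{1}$ being compact and non-empty. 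The same objection kills $L_{1}'$. Your condition is much stronger than exactness ($\lambda_{1}|_{L}=\d f$ for some $f:L\to\R$), which is all the exotic symplectizations furnish. As a consequence, the product $\Lambda=\Lambda_{0}\times L_{1}$ is never Legendrian when $L_{1}$ is a compact Lagrangian, and the entire reduction collapses.

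The correct fix, and the route the paper takes, is to twist the product by the Reeb flow using the primitive: given $j:L\to SY_{1}$ with $j^{*}\lambda_{1}=\d f$ and a Legendrian $i:\Lambda_{0}\to Y_{0}$, the lift is $J(x,y)=(\rho_{f(y)}(i(x)),j(y))$. The $\rho_{f(y)}$ factor absorbs the non-vanishing of $\lambda_{1}|_{L}$ and makes $J^{*}A=0$. Your lift of the Hamiltonian isotopy to a strict contact isotopy of $Q$ is essentially correct and matches the paper's formula, but once the lift is applied to this twisted $J$ rather than to a product, a new difficulty appears which you do not address: the primitive $f$ evolves along the isotopy (by the integral of $X_{t}\intprod\lambda_{1}-H_{t}$ along flow lines), so the endpoints $\Phi_{1}^{k}(\Lambda)$ of the small-energy isotopies produced by Sikorav's trick are generally \emph{different} Reeb translates of the same Lagrangian lift for different $k$. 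Establishing degeneracy therefore requires showing that these primitive changes stay uniformly bounded in $k$ (so that, after passing to a subsequence and a cut-off Reeb flow, all isotopies can be rerouted to one fixed $\Lambda_{2}$). This uniform bound is the genuinely delicate step and occupies the final portion of the paper's argument; your observation that ``$G_{1}|_{L_{1}}$ is constant'' rests on the impossible hypothesis $\lambda_{1}|_{L_{1}'}=0$ and does not substitute for it.
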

The dichotomy proved in \cite{zhang_rosen} immediately implies:
\begin{cor}
With $Q,A,\Lambda$ as in Theorem \ref{theorem:nonLDR}, the Shelukhin-Chekanov-Hofer pseudo-metric is identically zero on the space of Legendrians isotopic to $\Lambda$.
\end{cor}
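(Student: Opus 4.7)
The plan is to reduce the Legendrian displacement problem in $Q = Y_0 \times SY_1$ (with contact form $A = \alpha_0 - \lambda_1$) to a Lagrangian displacement problem in $SY_1$. The exotic geometric input, to be constructed for specific $Y_1$ in a later section, is a compact exact Lagrangian $L \subset SY_1$ with a $\lambda_1$-primitive $f\colon L \to \R$ satisfying the following Hofer-cheap displacement property: for every $\epsilon > 0$ there exists a compactly supported Hamiltonian $h^\epsilon_t\colon SY_1 \to \R$ with oscillation norm less than $\epsilon$ and $\phi^{h^\epsilon}_1(L) \cap L = \emptyset$.

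Given such $L$, I would fix any Legendrian $\Lambda_0 \subset Y_0$ and define
\[ \Lambda := \set{(\phi^{R_0}_{f(z)}(y),z) : y \in \Lambda_0,\ z \in L}\subset Q, \]
where $\phi^{R_0}$ is the Reeb flow of $\alpha_0$. A direct tangent-vector calculation using $\alpha_0|_{\Lambda_0}=0$, $\alpha_0(R_0)=1$, and $\lambda_1|_L=df$ gives $A|_\Lambda=0$, so $\Lambda$ is a compact Legendrian diffeomorphic to $\Lambda_0 \times L$; the Reeb shift by $f(z)$ is what compensates for the non-vanishing primitive $df$ on $L$. To displace $\Lambda$ I would lift $h^\epsilon_t$ to the contact Hamiltonian $H^\epsilon_t(y,z) := h^\epsilon_t(z)$ on $Q$. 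The Reeb field of $A$ is $R_0$ extended trivially in the $SY_1$-direction (this follows because $\omega_1$ is symplectic so any vector in $\ker(d\alpha_0 - \omega_1)$ has vanishing $SY_1$-component), hence $H^\epsilon_t$ is $R_A$-invariant and the contact isotopy $\Psi^\epsilon_t$ it generates is strict. Its projection to $SY_1$ is the Hamiltonian flow $\phi^{h^\epsilon}_t$, and the contact Hamiltonian of the Legendrian isotopy $\Lambda^\epsilon_t := \Psi^\epsilon_t(\Lambda)$ reduces to $h^\epsilon_t$ evaluated on the moving $SY_1$-coordinate, so its oscillation energy is at most $\int_0^1 (\max h^\epsilon_t - \min h^\epsilon_t)\,dt < \epsilon$. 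Since Reeb trajectories of $A$ preserve the $SY_1$-coordinate, a Reeb chord between $\Lambda$ and $\Lambda^\epsilon_1$ would force $L \cap \phi^{h^\epsilon}_1(L) \ne \emptyset$, which is ruled out by construction. This establishes the first assertion.

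For the degeneracy of the SCH pseudo-metric, the above produces, for every $\epsilon > 0$, distinct Legendrians $\Lambda \ne \Lambda^\epsilon_1$ with $\mathrm{dist}_A(\Lambda,\Lambda^\epsilon_1) < \epsilon$. To upgrade this to a single pair at genuine zero distance, I would arrange the family $h^\epsilon_t$ to have uniformly bounded support so that, by an Arzel\`a--Ascoli argument, the time-$1$ maps $\phi^{h^\epsilon}_1$ subconverge in $C^0$ to a nontrivial $\phi^\infty$ with $\phi^\infty(L) \ne L$. Then $\Lambda^\infty := \phi^\infty(\Lambda) \ne \Lambda$ should satisfy $\mathrm{dist}_A(\Lambda,\Lambda^\infty) = 0$, giving the required degeneracy; the dichotomy of \cite{zhang_rosen} then upgrades this to identically zero and yields the corollary.

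The main obstacle of the whole argument is the geometric construction of $(Y_1,L)$ with the Hofer-cheap displacement property; this is the novel input and must be supplied separately (likely via some flexibility or contractibility mechanism on the Lagrangian side in $SY_1$). The reduction performed above is essentially formal, with two small technical points that require care: the Reeb shift in the definition of $\Lambda$ (without which $\Lambda$ would fail to be Legendrian) and the $C^0$-limit step in the degeneracy argument, where enough control on the approximating flows must be extracted from the construction of $L$ to guarantee a non-trivial limit in the same isotopy class.
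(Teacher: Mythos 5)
The corollary itself is a one-line consequence of Theorem~\ref{theorem:nonLDR}: that theorem already supplies degeneracy of $\mathrm{dist}_A$ on the isotopy class, and the dichotomy of \cite{zhang_rosen} upgrades degeneracy to identically zero. You correctly identify the dichotomy as the decisive step, and that matches the paper exactly.

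However, the bulk of your write-up is an attempted re-proof of Theorem~\ref{theorem:nonLDR}, and the degeneracy portion has a genuine gap. You try to obtain a single pair at zero distance by extracting a $C^{0}$-subsequential limit $\phi^{\infty}$ of the time-$1$ maps $\phi^{h^{\epsilon}}_{1}$ on $SY_{1}$ and setting $\Lambda^{\infty}:=\phi^{\infty}(\Lambda)$. This does not work. First, $\phi^{\infty}$ is a map of $SY_{1}$, not of $Q$, and the lifted Legendrian $\Psi^{\epsilon}_{1}(\Lambda)$ is \emph{not} determined by $\phi^{h^{\epsilon}}_{1}$ alone: by the paper's Lemma~\ref{lemma:lifting_primitive}, it depends on the whole isotopy through the primitive correction $\int_{0}^{1}(X_{t}\intprod\lambda_{1}-H_{t})\circ\varphi_{t}\,\d t$. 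Two isotopies with identical endpoints lift $\Lambda$ to Legendrians differing by a Reeb translation, so even if one could fix $\phi^{h^{\epsilon}}_{1}$ to be a single map independent of $\epsilon$ (the paper's \S\ref{sec:achieving_boundedness} arranges exactly this by conjugating Sikorav's isotopy with the Liouville flow), the lifted Legendrians still drift. Second, a $C^{0}$ limit of Hamiltonian diffeomorphisms need not be a diffeomorphism, so $\Lambda^{\infty}$ need not be a Legendrian in the isotopy class, and you have no mechanism to conclude $\mathrm{dist}_{A}(\Lambda,\Lambda^{\infty})=0$. The paper instead bounds the Reeb-translation ambiguity uniformly in $k$ (Lemma~\ref{lemma:ad_hoc}), extracts a convergent subsequence of the translation constants $C_{k}$, defines $\Lambda_{2}$ to be the corresponding Reeb translate, and then concatenates the lifted isotopy with a short cut-off Reeb flow so as to land precisely on $\Lambda_{2}$. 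That quantitative tracking of the primitives is the real content of the degeneracy argument and is missing from your proposal.
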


Our proof of Theorem \ref{theorem:nonLDR} is based on Sikorav's example, as conveyed in \cite[\S4]{chekanov_2000}. Indeed, our ``exotic'' contact manifolds are those $Y_{1}$ for which the symplectization $SY_{1}$ admits a \emph{compact embedded exact Lagrangian}. The results of \cite{chekanov_2000} imply that such symplectizations cannot be embedded into any tame symplectic manifold (and hence we deem these $Y_{1}$ exotic). Sikorav's argument uses an exotic contact $\R^{5}$ whose symplectization admits a compact Lagrangian sphere due to \cite{marie_paule_muller}.

Using the $h$-principle for Lagrangian caps established in \cite{lagrangian_caps}, Murphy explains in \cite{murphy_closed_exact} that there are many compact $Y_{1}$, with $\dim(Y_{1})\ge 5$, whose symplectizations contain compact exact Lagrangians; the requirement is the existence of a certain \emph{plastikstufe} in $Y_{1}$. The papers \cite{BEM,casals_murphy_presas} relate the existence of a plastikstufe to the \emph{overtwistedness} of $Y_{1}$. This suggests the following questions:

\begin{question}
  Is there a three-dimensional compact contact manifold whose symplectization contains a compact exact Lagrangian? Note: Any such Lagrangian is displaceable and hence is necessarily a 2-torus.
\end{question}
\begin{question}
  Are there tight contact manifolds whose symplectizations contain compact exact Lagrangians?
\end{question}

\subsubsection{Dependence of the LDR property on the contact form}
\label{sec:LDR_comment}
The argument proving Theorem \ref{theorem:nonLDR} implies that $\R^{7}\simeq \R\times S\R^{5}$ contains Legendrian spheres which can be disjoined with arbitrarily small oscillation energy, where $\R^{5}$ has the exotic contact structure constructed in \cite{marie_paule_muller}. Then \cite[Corollary 1.4]{BEM} implies that this exotic $\R^{7}$ can be embedded into any $7$-dimensional overtwisted manifold, in particular, it can be embedded into a \emph{closed} contact manifold. The work of \cite{rizell_sullivan_persistence_1,rizell_sullivan_augmentation} implies that closed contact manifolds are LDR.

\subsection{Disjoinment rigidity for Legendrians admitting generating functions}
\label{sec:general_rigidity}

The next result is that certain Legendrians in $1$-jet spaces are disjoinment rigid. The relevant class of Legendrians are those which admit \emph{linear-at-infinity generating functions}, as in \cite{chekanov_pushkar,jordan_traynor,sabloff_traynor_slice,fuchs_rutherford,sabloff_traynor_obstructions,bourgeois_sabloff_traynor}. A function $F:B\times \R^{N}\to \R$, is called \emph{linear-at-infinity} if $F(x,\eta)=f(x,\eta)+\ell(\eta)$ where $\ell\ne 0$ is a linear function and $f$ is compactly supported. Let us say that $F$ is a \emph{generating function} provided the fiber derivative $\bd_{\eta}F:B\times \R^{N}\to \R^{N}$ has $0$ as a regular value. In this case, $\bd_{\eta}F=0$ defines a smooth fiberwise singular set $\Sigma$, and the map $\Sigma\to J^{1}(B)$ (returning the critical value and horizontal derivative) defines an immersed Legendrian. A Legendrian which can be parametrized by such a map $\Sigma\to J^{1}(B)$ is said to admit a generating function. As detailed in \cite[\S3]{fuchs_rutherford}, for Legendrian knots in $\R^{3}$ the existence of a linear-at-infinity generating function is equivalent to the existence of a graded normal ruling of the front projection. The paper \cite{bourgeois_sabloff_traynor} gives many examples of Legendrians which admit linear-at-infinity generating functions. The main result of this section is:

\begin{theorem}\label{theorem:generating_function}
  Suppose that $\Lambda_{0}\subset J^{1}(B)$ is an embedded Legendrian which admits a linear-at-infinity generating function. Then any isotopy $\Lambda_{t}$ disjoining $\Lambda_{0}$ has oscillation energy at least the minimal positive action of a Reeb chord of $\Lambda_{0}$. 
\end{theorem}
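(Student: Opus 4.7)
The plan is to use generating-function spectral invariants, in the style of Chekanov's Lagrangian displacement energy argument \cite{chekanov_2000}. By the Chekanov-Chaperon persistence theorem for linear-at-infinity generating functions (developed in \cite{chekanov_pushkar}), the isotopy $\Lambda_{t}$ lifts to a smooth family $F_{t}:B\times \R^{N}\to \R$ of linear-at-infinity generating functions with $F_{0}$ the given generating function of $\Lambda_{0}$, possibly after stabilizing the fiber dimension. Form the relative function
\begin{equation*}
\Delta_{t}(x,\eta_{0},\eta_{1}):=F_{t}(x,\eta_{1})-F_{0}(x,\eta_{0})
\end{equation*}
on $B\times \R^{N}\times \R^{N}$, which is again linear at infinity. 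The standard calculus of generating functions identifies the critical points of $\Delta_{t}$ with Reeb chords from $\Lambda_{0}$ to $\Lambda_{t}$ together with intersection points of $\Lambda_{0}$ with $\Lambda_{t}$; the critical value is the signed action of the chord, vanishing at intersections.

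Using min-max over classes in the relative cohomology of sublevel sets of $\Delta_{t}$, define two spectral invariants $c_{+}(t)$ and $c_{-}(t)$. The key input is the computation at $t=0$: the diagonal $\set{\eta_{0}=\eta_{1}}$ is a Morse-Bott critical submanifold of $\Delta_{0}$ at value $0$ with critical locus diffeomorphic to $\Lambda_{0}$, while the off-diagonal critical points occur at values $\pm \ell_{i}$, where $\ell_{i}>0$ ranges over the positive self-Reeb-chord actions of $\Lambda_{0}$. Choosing dually-paired classes $\beta_{\pm}$ (e.g.\ fundamental and point class of the diagonal critical submanifold, suitably lifted through the sublevel-set filtration) and applying classical min-max yields $c_{+}(0)-c_{-}(0)\ge \ell_{\min}$, where $\ell_{\min}$ denotes the minimum positive self-chord action of $\Lambda_{0}$.

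For the Lipschitz estimate, at almost every $t$ one has $\ud{c_{\pm}}{t}=\bd_{t}F_{t}(x^{*},\eta_{1}^{*})$ at a critical point realizing the spectral value. A direct computation in $J^{1}(B)$ with contact form $\d z-p\,\d q$ shows that $\bd_{t}F_{t}(x^{*},\eta_{1}^{*})=h_{t}(i_{t}(x^{*}))$, the contact Hamiltonian at the Legendrian endpoint of the selected chord; hence $\abs{\ud{(c_{+}-c_{-})}{t}}\le \max_{\Lambda_{t}}h_{t}-\min_{\Lambda_{t}}h_{t}$, and, on integration,
\begin{equation*}
\abs{(c_{+}(1)-c_{-}(1))-(c_{+}(0)-c_{-}(0))}\le \mathrm{energy}^{\mathrm{osc}}_{\alpha}(\Lambda_{t}).
\end{equation*}
When the isotopy $\Lambda_{t}$ disjoins $\Lambda_{0}$, every critical point of $\Delta_{1}$ must have value $0$ (since no positive- or negative-action chord exists), so $c_{\pm}(1)\in\set{0}$ and $c_{+}(1)-c_{-}(1)=0$. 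Combining this with the lower bound at $t=0$ yields $\ell_{\min}\le \mathrm{energy}^{\mathrm{osc}}_{\alpha}(\Lambda_{t})$.

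The main obstacle in this plan is the spectral-gap inequality $c_{+}(0)-c_{-}(0)\ge \ell_{\min}$: this requires a careful Morse-Bott persistence argument, tracking how the diagonal Morse-Bott cohomology embeds in the full generating-function cohomology of $\Delta_{0}$, and showing that no cycle representing $\beta_{+}$ can be pushed below level $\ell_{\min}$ without crossing an off-diagonal critical orbit at value $\ell_{\min}$. A secondary technical point is to justify the a.e.\ differentiability formula for the min-max value in the linear-at-infinity setting (rather than the more standard quadratic-at-infinity setting), which may require passing to a fiberwise quadratic stabilization of $\Delta_{t}$.
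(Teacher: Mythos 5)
Your plan follows the same outline as the paper's proof: lift the isotopy to a family of linear-at-infinity generating functions (the paper uses \cite{jordan_traynor} together with \cite[Lemma 3.8]{sabloff_traynor_obstructions}), form the difference function $\Delta_{t}$, show the relevant spectral data is Lipschitz in $t$ with constant the oscillation, and anchor the argument at $t=0$ via the Morse--Bott critical locus of $\Delta_{0}$ at value $0$. The Lipschitz step is correct and matches the paper's Lemma~\ref{lemma:contact_ham_t_derivative}. The divergence is in which spectral data one tracks: you propose min-max invariants $c_{\pm}(t)$ built from fixed cohomology classes, whereas the paper tracks the \emph{barcode} of the relative persistence module $V_{t}(s)=H_{*}(K_{t}<s,K_{t}<-R)$ and, in particular, the length of the longest bar.

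This difference is not cosmetic, and it is where your proposal is incomplete. For a \emph{linear-at-infinity} function $F(x,\eta)=\ell(\eta)$ outside a compact set (with $\ell\ne 0$), the pair $(\{F<R\},\{F<-R\})$ has vanishing relative homology once $R$ is large, because $\{\ell<R\}$ deformation retracts onto $\{\ell<-R\}$ along $\nabla\ell$. In other words, the total generating-function homology that would support a Viterbo-type ``fundamental class'' spectral invariant is \emph{zero}; every bar in the persistence module is finite. This is precisely the structural difference with the quadratic-at-infinity setting (your ``secondary technical point''), and it is not a technicality: it means there is no distinguished class $\beta_{+}$ that survives to the top of the filtration, so the proposed definition of $c_{+}$ does not go through directly. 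You acknowledge the spectral-gap inequality $c_{+}(0)-c_{-}(0)\ge\ell_{\min}$ as the ``main obstacle''; it is, and it is exactly the content that the paper supplies. The paper's resolution is a Barannikov-decomposition argument: after perturbing to a strong Morse family $\bar K_{t}$, one shows there must be a bar at $t=0$ pairing a critical value $\epsilon$-close to $0$ (``Type 1,'' coming from the Morse--Bott critical manifold $\cong\Lambda_{0}$) with one at distance $\ge A-\epsilon$ from $0$ (``Type 2''); otherwise the Morse homology of the cobordism $\{-2\epsilon\le\bar K_{0}\le 2\epsilon\}$ would vanish, contradicting the fact that it is the (shifted) homology of $\Lambda_{0}$. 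That step is what your proposal omits.

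Two smaller points. First, at $t=1$, if the isotopy disjoins $\Lambda_{0}$ then $\Delta_{1}$ has \emph{no} critical points at all (an action-zero chord would be an intersection point of $\Lambda_{0}$ with $\Lambda_{1}$, which disjoining forbids); so $c_{\pm}(1)$ are not ``both zero'' so much as undefined. In the barcode language this is cleanly ``the barcode at $t=1$ is empty,'' but you would need to build that degenerate case into your definitions of $c_{\pm}$. Second, you should be careful about which critical point of $\Delta_{t}$ realizes the spectral value when applying $\bd_{t}\Delta_{t}=\bd_{t}F_{t}$; this is fine since $F_{0}$ is $t$-independent and a critical point of $\Delta_{t}$ projects to a fiberwise critical point of $F_{t}$, but the paper states this carefully via Lemma~\ref{lemma:contact_ham_t_derivative}. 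In summary: same skeleton, but the load-bearing wall --- establishing a lower bound on the persistence of the Morse--Bott locus in a setting with trivial total homology --- is identified but not built in your proposal; the Barannikov argument is the missing piece.
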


The crux of the argument is to construct a \emph{barcode}\footnote{in the sense of topological data analysis; see \cite{ghrist_persistent} for further details and \cite{polterovich_shelukhin_persistence_1,usher_zhang,kislev_shelukhin,shelukhin_viterbo,shelukhin_zoll} and the references therein for applications to symplectic geometry.} so that the endpoints of the bars are actions of Reeb chords between $\Lambda_{0}$ and $\Lambda_{t}$. The lengths of the bars are Lipshitz continuous as a function of $t$, and the Lipshitz constant depends on the oscillation energy of the isotopy. If there are no chords between $\Lambda_{0}$ and $\Lambda_{1}$, then the barcode at time $t=1$ is empty. In this fashion, we will conclude the minimal amount of oscillation energy needed to disjoin $\Lambda_{0}$ is at least \emph{the length of longest bar in the barcode at time $t=0$}. The argument is completed by showing that there are bars at $t=0$ whose length is at least the minimal positive action of a chord of $\Lambda_{0}$.

Our argument is inspired by the papers \cite{rizell_sullivan_persistence_1} and \cite{rizell_sullivan_augmentation} which establish the LDR property in certain cases by examining how the Chekanov-Eliashberg DGA changes under bifurcations. Via an analysis of the possible bifurcations, they show that one can define a continuously varying barcode and perform a similar argument to the one outlined above. Strictly speaking, they do not construct a barcode in the usual sense, but rather a more general barcode incorporating an action window whose endpoints depend on $t$; see \cite[\S2.1]{rizell_sullivan_persistence_1}. Their persistence results apply in more general settings than ours, as the Chekanov-Eliashberg DGA theory is well-established in $1$-jet spaces by the work of \cite{ekholm_etnyre_sullivan_P}. Hopefully our generating function approach is of independent interest. 





\subsection{Oscillation energy for regular homotopies of Legendrians}
\label{sec:immersions}

The notions of length and oscillation energy can be generalized to families of immersions, i.e., \emph{regular homotopies} of Legendrians, as in \cite{laudenbach_regular_homotopies}. Given a family of Legendrian immersions $i_{t}:\Lambda\to (Y,\alpha)$, we define the contact Hamiltonian $h_{t}(x)=\alpha_{i_{t}(x)}(i_{t}'(x))$, length, and oscillation energy exactly as in \eqref{eq:length_osc_energy}. The difference when compared with embeddings is that regular homotopies cannot generally be extended to ambient contact isotopies.

Our first result about Legendrian immersions is a construction of a Legendrian which can be disjoined by a regular homotopy with arbitrarily small oscillation energy. Introduce the following local model: $B(a)\subset \C$ is the disk of area $a$, and $Q(a)$ is the contactization:
\begin{equation*}
  Q(a)=\R/\Z\times B(a)\text{ with contact form }\alpha=\d t-\lambda,
\end{equation*}
where $\lambda=\frac{1}{2}(x\d y-y\d x)$ is the standard Liouville form. 

\begin{theorem}\label{theorem:immersion_disjoining}
  Let $(Y,\xi)$ be a contact three-manifold which contains an embedded copy of $Q(1+\epsilon)$ for some $\epsilon>0$. Let $\alpha$ be a contact form which agrees with the $\d t-\lambda$ form on $Q(1+\epsilon')$ for some $0<\epsilon'<\epsilon$. Then $(Y,\alpha)$ contains an embedded Legendrian $\Lambda_{0}$ so that there are regular homotopies $\Lambda_{t}$ of arbitrarily small oscillation energy, with no Reeb chords between $\Lambda_{0}$ and $\Lambda_{1}$.
\end{theorem}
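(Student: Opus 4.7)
Take $\Lambda_{0}$ to be the standard Legendrian lift into $Q(1+\epsilon')\subset Y$ of the round embedded circle $\gamma_{0}\subset B(1+\epsilon')$ of radius $r_{0}=1/\sqrt{\pi}$, so $\gamma_{0}$ encloses symplectic area exactly $1$ and $\Lambda_{0}$ is embedded, winding once around the $S^{1}$-factor. The starting point for the estimate is the following identity, obtained by differentiating the Legendrian condition $\partial_{s}\tau=\lambda(\partial_{s}\gamma_{t})$ in $t$, with $h_{t}=\partial_{t}\tau-\lambda(\partial_{t}\gamma_{t})$:
\[
\partial_{s}h_{t}(s)\;=\;\d\lambda(\partial_{t}\gamma_{t},\partial_{s}\gamma_{t}).
\]
Consequently, $\max_{s}h_{t}-\min_{s}h_{t}$ is the largest value, as $[s_{0},s_{1}]$ varies over sub-arcs of the parametrization, of $\abs{\int_{s_{0}}^{s_{1}}\d\lambda(\partial_{t}\gamma_{t},\partial_{s}\gamma_{t})\,\d s}$, i.e.\ the peak algebraic symplectic area swept per unit time by a sub-arc of $\gamma_{t}$.

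For each $\delta>0$, I plan to disjoin $\Lambda_{0}$ by a regular homotopy $\Lambda_{t}^{(\delta)}$ staying inside $Q(1+\epsilon')$, whose terminal projection $\gamma_{1}^{(\delta)}\subset B(1+\epsilon')$ consists of a counterclockwise round loop at radius $r_{0}+\delta$ (immediately disjoint from $\gamma_{0}$) with an attached figure-eight bump placed in the annulus $\set{r_{0}+\delta<r<r_{\max}}$: the clockwise lobe has area $a_{1}=2\pi r_{0}\delta+\pi\delta^{2}+c$ and the counterclockwise lobe has area $a_{2}=c$, for a small auxiliary parameter $c>0$. One checks that the resulting curve has signed area $\pi(r_{0}+\delta)^{2}-a_{1}+a_{2}=1$ and turning number $+1$ (the two lobes of the bump contribute $-1$ and $+1$, cancelling), so by Whitney's theorem it is regularly homotopic to $\gamma_{0}$. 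Explicitly, $\gamma_{t}^{(\delta)}$ is obtained by linearly inflating the main loop, $r(t)=r_{0}+t\delta$, while growing the figure-eight bump through a Reidemeister-II-type move in which a self-tangency is born and resolves into a transverse double point pair; the lobe areas $a_{1}(t),a_{2}(t)$ are scheduled so that $\pi r(t)^{2}-a_{1}(t)+a_{2}(t)\equiv 1$.

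A direct polar-coordinate computation shows $\d\lambda(\partial_{t}\gamma_{t},\partial_{s}\gamma_{t})$ integrates to $2\pi r(t)r'(t)$ over the main arc, to $-a_{1}'(t)$ over the clockwise lobe, and to $+a_{2}'(t)$ over the counterclockwise lobe, with the thin connecting tubes contributing negligibly; signed-area preservation gives $a_{1}'(t)-a_{2}'(t)=2\pi rr'$, so these sum to zero around $\gamma_{t}$. Tracking the running total of $h_{t}$ along $\gamma_{t}$ in the order (main, tube, CW lobe, tube, CCW lobe, tube), one sees the trajectory $h_{0}\mapsto h_{0}+2\pi rr'\mapsto h_{0}+2\pi rr'-a_{1}'\mapsto h_{0}$, so $\max_{s}h_{t}-\min_{s}h_{t}=a_{1}'(t)$. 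Therefore the oscillation energy is $\int_{0}^{1}a_{1}'(t)\,\d t=a_{1}(1)=2\pi r_{0}\delta+\pi\delta^{2}+c$; taking $c=\delta^{2}$ and $\delta\to 0$ makes this arbitrarily small. No Reeb chord connects $\Lambda_{0}$ and $\Lambda_{1}^{(\delta)}$, because both Legendrians lie inside $Q(1+\epsilon')$ where the Reeb field of $\alpha$ is $\partial_{t}$ (which preserves the projection to $B$), and the projections $\gamma_{0}\subset\set{r=r_{0}}$ and $\gamma_{1}^{(\delta)}\subset\set{r>r_{0}}$ are disjoint. The main technical point to verify carefully is that the Reidemeister-II birth of the figure-eight can be executed within an initial time window with oscillation contribution of order $c$, so that it is dominated by the main estimate.
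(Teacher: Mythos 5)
Your construction is essentially the paper's: lift an area-one circle to a Legendrian $\Lambda_0$, then disjoin it through immersed planar loops of fixed signed area $1$ by attaching a small figure-eight whose two lobes absorb and release the area change while preserving the turning number. Your identity $\partial_s h_t = \d\lambda(\partial_t\gamma_t,\partial_s\gamma_t)$ and the resulting bookkeeping of swept lobe areas is a more explicit version of the paper's ``Lagrangian interpretation'' of the oscillation energy (the paper moves the main loop inward rather than outward, but that is cosmetic). The one point you flag but do not finish --- that the Reidemeister-II birth of the figure-eight contributes only $O(c)$ to the oscillation energy --- is exactly what the paper's rescaling lemma supplies: conjugating the local birth model by a contraction $\varphi$ with $\varphi^*\omega = c\,\omega$ scales its oscillation energy by $c$, so the birth can be made as cheap as desired. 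Incorporating that observation closes your acknowledged gap and makes the argument complete.
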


Note that \cite[Corollary 1.25]{ekp} states that every contact three-manifold contains an embedded copy of $Q(a)$ for \emph{every} $a>0$. Thus our theorem applies to all contact three-manifolds.

\subsubsection{On Legendrian non-squeezing}
\label{sec:conjecture}
The Legendrian used in Theorem \ref{theorem:immersion_disjoining} is a lift of the loop $\bd B(1)$ (note that $\R/\Z\times \bd B(1)$ is foliated by Legendrian lifts, and $\R/\Z$ acts freely on the set of lifts). The argument used in \S\ref{sec:proof_immersion} shows the Legendrian can be squeezed inside $\R/\Z\times B(1)$ if one allows immersions. The following result shows this Legendrian cannot be squeezed into $\R/\Z\times B(1)$ through embeddings, strengthening a non-squeezing result of \cite{ekp} (in the 3-dimensional case).
\begin{prop}\label{prop:rizell_sullivan}
  Let $\Lambda_{1}\subset \R/\Z\times \C$ be a Legendrian lift of $\bd B(1)\subset \C$. Then $\Lambda_{1}$ is not isotopic to a Legendrian contained in $\R/\Z\times B(1)$.
\end{prop}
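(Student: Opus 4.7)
Write the Legendrian in the statement as $\Lambda$ (reserving the subscript for parameters). The plan is to argue by contradiction using an equivariant persistence/generating-function invariant inspired by the barcode construction in the proof of Theorem~\ref{theorem:generating_function}. Suppose $\Lambda$ admits a Legendrian isotopy $\{\Lambda_s\}_{s\in[0,1]}$ ending in some $\Lambda_{\mathrm{end}} \subset \mathbb{R}/\mathbb{Z} \times B(1)$; by compactness $\Lambda_{\mathrm{end}}\subset\mathbb{R}/\mathbb{Z}\times B(R)$ for some $R<1/\sqrt{\pi}$.

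The first step is to pass to the universal cover. Under the coordinate change $z=t+xy/2$, $q=y$, $p=x$, the universal cover $\widetilde{Y}$ of $Y$ becomes $J^1(\mathbb{R})$ with deck transformation $z\mapsto z+1$. The isotopy lifts to a $\mathbb{Z}$-equivariant isotopy $\widetilde{\Lambda}_s$ of non-compact Legendrians in $J^1(\mathbb{R})$, each spiraling once per $z$-period. Next I would construct, for each $s$, an equivariant generating function $F_s:\mathbb{R}_q\times\mathbb{R}^N\to\mathbb{R}$ satisfying $F_s(q+1,\eta) = F_s(q,\eta)+1$ and with quadratic-at-infinity behavior in $\eta$, by adapting the stabilization procedures used in Theorem~\ref{theorem:generating_function} (following \cite{chekanov_pushkar,jordan_traynor,bourgeois_sabloff_traynor}) to the $\mathbb{Z}$-equivariant setting. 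Sublevel sets of $F_s$ descend modulo $\mathbb{Z}$ to a circle-valued persistence module whose barcode has bars of length $\le 1$; by a Lipschitz-in-oscillation-energy estimate analogous to the one in Theorem~\ref{theorem:generating_function}, the longest bar length $c(\Lambda)\in(0,1]$ should be a Legendrian isotopy invariant.

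The final step computes $c$ at both endpoints. For the initial Legendrian $\Lambda$, the front of $\widetilde{\Lambda}$ is a periodic wave in the $(q,z)$-plane with $|q|\le 1/\sqrt{\pi}$ and $z$-period $1$; direct inspection yields $c(\Lambda)=1$, realized by the two critical values at the top and bottom of one wave. For $\Lambda_{\mathrm{end}}\subset\mathbb{R}/\mathbb{Z}\times B(R)$ with $R<1/\sqrt{\pi}$, the front lies in the strip $|q|\le R$; an area-type estimate combining this strip confinement with the unavoidable identity $\int_{\gamma_{\mathrm{end}}}\lambda=1$ (forced by the homology class) should show $c(\Lambda_{\mathrm{end}})<1$, yielding the contradiction.

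The main obstacle is the equivariant stabilization step: matching the standard linear-at-infinity hypothesis of Theorem~\ref{theorem:generating_function} with the $\mathbb{Z}$-equivariance in the $q$-direction, and checking that the persistence module descends properly to a circle-valued object whose barcode behaves well under isotopy. An alternative route would bypass generating functions entirely and instead construct the same circle-valued barcode via an equivariant Legendrian contact homology in the spirit of \cite{rizell_sullivan_persistence_1,rizell_sullivan_augmentation}, reading off the bars directly from the Reeb chord actions of $\widetilde{\Lambda}_s$ modulo the Reeb period.
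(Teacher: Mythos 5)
Your approach is genuinely different from the paper's: the paper gives a short knot-theoretic argument, whereas you propose a quantitative persistence/generating-function argument. For the record, the paper's proof identifies $\R/\Z\times B(1)$ with $S^3$ minus a Hopf circle, takes a reference unknot $\Lambda_2$ (a lift of a double cover of $\bd B(1/2)$) with $\mathtt{tb}(\Lambda_2)=-1$, computes $\mathtt{tb}(\Lambda)-\mathtt{tb}(\Lambda_2)=+1$ via an explicit Seifert surface in $\R/\Z\times\C$ following \cite{rizell_sullivan_non_squeezing}, and concludes that any squeezed $\Lambda$ would have $\mathtt{tb}(\Lambda)=0$, contradicting Bennequin's inequality.

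The central gap in your proposal is the invariance claim: you assert that the longest bar length $c(\Lambda)$ of the circle-valued barcode ``should be a Legendrian isotopy invariant'' by a Lipschitz-in-oscillation-energy estimate. A Lipschitz estimate (which is what the barcode argument in the proof of Theorem~\ref{theorem:generating_function} actually delivers) gives \emph{continuity} in $t$ with constant controlled by the oscillation energy; it does not give invariance. Since the isotopy $\Lambda_s$ is allowed arbitrary oscillation energy, the longest-bar length is free to drift from $1$ down to something less than $1$ over the course of the isotopy, and the desired contradiction between $c(\Lambda)=1$ and $c(\Lambda_{\mathrm{end}})<1$ does not follow. To salvage the argument you would need to identify $c(\Lambda)=1$ with a genuinely topological feature --- e.g.\ the presence of an essential (infinite) bar in the unrolled Novikov-type persistence module, which corresponds to a Novikov homology class and \emph{is} an isotopy invariant --- and then prove that squeezing into $\R/\Z\times B(R)$ with $R<1/\sqrt{\pi}$ kills that class. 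Neither the precise formulation (finite vs.\ essential bars) nor the vanishing statement is spelled out. In addition, the $\Z$-equivariant linear-at-infinity stabilization needed to even define $F_s$ is, as you note yourself, not established; this is not a small technicality, since the argument of Lemma~\ref{lemma:GF1} relies on the base being compact. So while the persistence-theoretic route is conceptually in the spirit of contact non-squeezing results proved with generating functions, as written it is a sketch with a broken key step, whereas the paper's Bennequin-inequality argument is complete and elementary.
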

Originally this was stated as a conjecture, but its proof was communicated to me by \cite{rizell_sullivan_private_communication}, using results from \cite{rizell_sullivan_non_squeezing}. The proof is given in \S\ref{sec:proof_conjecture}.



\subsection{Dichotomy for an induced Finsler metric}
\label{sec:dichotomy}
Let $(Y^{2n+1},\alpha)$ be a contact manifold, let $\Lambda^{n}$ be a compact smooth manifold, and let $\mathscr{I}=\mathscr{I}(\Lambda)$ be the space of Legendrian immersions $i:\Lambda\to Y$ modulo reparametrization by diffeomorphisms. The notion of length of regular homotopies equips $\mathscr{I}$ with a Finsler pseudo-metric. More precisely, given two equivalence classes of immersions $[j_{0}]$, $[j_{1}]$, the pseudo-distance between them is:
\begin{equation*}  \mathrm{dist}_{\mathscr{I},\alpha}([j_{0}],[j_{1}])=\mathrm{inf}\set{\mathrm{length}_{\alpha}(i_{t}):i_{0}=j_{0}\circ \sigma\text{ and }i_{1}=j_{1}\circ \sigma'}
\end{equation*}
where $\sigma,\sigma'$ are diffeomorphisms of $\Lambda$. We should note that if $\sigma_{t}$ is an isotopy of diffeomorphisms then $\mathrm{length}_{\alpha}(i\circ \sigma_{t})=0$, and hence, if we want any hope of a non-degenerate metric, we need to at least mod out by diffeomorphisms isotopic to the identity. However, in order to prove our next result, we require modding out by all diffeomorphisms.

Let $\mathscr{E}\subset \mathscr{I}$ be the subset of embeddings, and let $\mathscr{E}=\mathscr{E}_{1}\cup \mathscr{E}_{2}\cup \dots$ be the decomposition into isotopy classes of embeddings. We have the following dichotomy:
\begin{theorem}\label{theorem:imm_dichotomy}
  For each isotopy class $\mathscr{E}_{k}$, the restriction of $\mathrm{dist}_{\mathscr{I},\alpha}$ to $\mathscr{E}_{k}$ is either identically zero or is non-degenerate.
\end{theorem}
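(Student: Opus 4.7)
The plan is to adapt the dichotomy argument of \cite{zhang_rosen} for the embedded Legendrian setting to the larger space $\mathscr{I}$ of immersions. The key ingredient is the quasi-invariance of $\mathrm{dist}_{\mathscr{I},\alpha}$ under contactomorphisms: for any $\phi$ with $\phi^*\alpha = c_\phi\cdot\alpha$, and any $[j_0],[j_1]\in\mathscr{I}$, one has
\[
\mathrm{dist}_{\mathscr{I},\alpha}([\phi\circ j_0],[\phi\circ j_1]) \le (\sup|c_\phi|)\cdot \mathrm{dist}_{\mathscr{I},\alpha}([j_0],[j_1]),
\]
a direct consequence of the transformation law $\alpha_{\phi(y)}(d\phi(v)) = c_\phi(y)\alpha_y(v)$ applied to the time-derivative $v = i'_t(x)$ of a regular homotopy. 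In particular, the zero-distance relation on $\mathscr{I}$ is preserved by contactomorphisms.

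Suppose, toward contradiction, that the restriction of $\mathrm{dist}_{\mathscr{I},\alpha}$ to $\mathscr{E}_k$ is neither identically zero nor non-degenerate: there exist distinct $[j_0],[j_1]\in\mathscr{E}_k$ with $\mathrm{dist}_{\mathscr{I},\alpha}([j_0],[j_1]) = 0$, yet some $[j_2]\in\mathscr{E}_k$ satisfies $\mathrm{dist}_{\mathscr{I},\alpha}([j_0],[j_2]) > 0$. By isotopy extension I would pick a contactomorphism $\phi$ with $\phi(j_0(\Lambda)) = j_2(\Lambda)$; the quasi-invariance then yields $\mathrm{dist}_{\mathscr{I},\alpha}([j_2],[\phi\circ j_1]) = 0$, so every point of $\mathscr{E}_k$ acquires a distinct zero-distance partner. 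I would then define the pseudo-norm $|\phi|_{j_0} := \mathrm{dist}_{\mathscr{I},\alpha}([j_0],[\phi\circ j_0])$ on $\mathrm{Cont}_0(Y,\xi)$. Triangle inequality combined with quasi-invariance gives that its kernel $N_{j_0}$ is a subgroup strictly containing the stabilizer $G_{j_0} = \{\phi : \phi(j_0(\Lambda)) = j_0(\Lambda)\}$, and for any $\chi\in\mathrm{Cont}_0(Y,\xi)$ one has $\chi N_{j_0}\chi^{-1} = N_{\chi\circ j_0}$.

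The hard part will be to deduce $N_{j_0} = \mathrm{Cont}_0(Y,\xi)$, which combined with the transitive action of $\mathrm{Cont}_0(Y,\xi)$ on $\mathscr{E}_k$ gives the desired identically-zero conclusion. Here the essential use of modding out by \emph{all} of $\mathrm{Diff}(\Lambda)$, rather than just the identity component, is critical: the enlarged stabilizer $G_{j_0}$ absorbs Legendrian reparametrizations by every diffeomorphism of $\Lambda$, making the chain of quasi-invariance estimates close. The plan is to write any element of $\mathrm{Cont}_0(Y,\xi)$ as a product of conjugates of elements of $N_{j_0}$, invoking the simplicity of the identity component of the contactomorphism group; the triangle-inequality bound on $|\phi_1\cdots\phi_k|_{j_0}$ should then collapse because the intermediate reparametrizations absorb into the $\mathrm{Diff}(\Lambda)$-quotient, while the flexibility of regular homotopies through non-embedded immersions permits cheap concatenation of the factors in a way that is not available in the purely embedded setting. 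Carrying out this accumulation argument rigorously, exploiting both the immersed geometry of $\mathscr{I}$ and the full diffeomorphism quotient in an essential way, is the principal technical obstacle.
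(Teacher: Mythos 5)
Your approach is genuinely different from the paper's, and it has a real gap which you yourself flag as ``the principal technical obstacle.'' The subgroup $N_{j_0}=\{\phi:\delta([j_0],[\phi\circ j_0])=0\}$ is \emph{not} normal: as you note, $\chi N_{j_0}\chi^{-1}=N_{\chi\circ j_0}$, which is a different subgroup. So simplicity of $\mathrm{Cont}_0(Y,\xi)$ at best tells you that the \emph{normal closure} of $N_{j_0}$ is all of $\mathrm{Cont}_0$. But writing $\phi=\prod_k\chi_k\psi_k\chi_k^{-1}$ with $\psi_k\in N_{j_0}$ and running the triangle inequality on $\delta([j_0],[\phi\circ j_0])$ produces summands of the form $\delta([\chi_k^{-1}j_0],[\psi_k\chi_k^{-1}j_0])$, and membership of $\psi_k$ in $N_{j_0}$ controls $\delta([j_0],[\psi_k j_0])$, not $\delta$ based at the different immersion $\chi_k^{-1}j_0$. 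Quasi-invariance only converts between the two at the cost of a constant $C(\chi_k)$ multiplying a quantity no longer known to vanish, so the sum does not collapse. Relatedly, $|\phi|_{j_0}$ is only quasi-subadditive, $|\phi\psi|_{j_0}\le|\phi|_{j_0}+C(\phi)|\psi|_{j_0}$, not a genuine pseudo-norm, which is another warning sign that naive product decompositions cannot give the bound you need. Also, the claim that the immersed category provides concatenation flexibility not available for embeddings is not what the paper leans on: the same dichotomy holds for embedded Legendrians by the same argument.

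The paper's proof avoids all of this. It reduces Theorem~\ref{theorem:imm_dichotomy} to Lemma~\ref{lemma:chekanov_leg} via the quasi-invariance estimate~\eqref{eq:quasi-invariant}, and proves the lemma by Chekanov's local trick from \cite{chekanov_2000}: supposing $\delta(\Lambda_0,\Lambda_\infty)=0$ with $\Lambda_0\ne\Lambda_\infty$, fix a $1$-jet neighborhood of $\Lambda_0$ covered by small cubes $Q(\epsilon)$, one of which misses $\Lambda_\infty$; for any $C^1$-small $1$-jet graph $\Lambda_1$ supported there, the realizing contactomorphism $\phi$ can be taken supported in that cube, hence it \emph{fixes $\Lambda_\infty$}, so $\delta(\Lambda_1,\Lambda_\infty)\le C(\phi)\delta(\Lambda_0,\Lambda_\infty)=0$ and the triangle inequality through $\Lambda_\infty$ gives $\delta(\Lambda_0,\Lambda_1)=0$. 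Propagating over finitely many cubes, sums of such functions, and using $C^1$ path-connectedness of $\mathscr{E}_k$ completes the argument — no group simplicity, no normal closures, and crucially the conjugating maps never move the anchor $\Lambda_\infty$. If you want to salvage your strategy, the missing ingredient is a localization mechanism of this type, not more group theory.
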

Indeed, observe that for any ambient compactly-supported contactomorphism $\phi$, with $\phi^{*}\alpha=e^{g}\alpha$,
\begin{equation}\label{eq:quasi-invariant}
  \mathrm{dist}_{\mathscr{I},\alpha}(\phi(\Lambda_{0}),\phi(\Lambda_{1}))\le e^{\max g}  \mathrm{dist}_{\mathscr{I},\alpha}(\Lambda_{0},\Lambda_{1}),
\end{equation}
and then Theorem \ref{theorem:imm_dichotomy} follows from:
\begin{lemma}\label{lemma:chekanov_leg}
  Let $\mathscr{E}_{k}$ be an isotopy class of compact Legendrians in $(Y,\xi)$, and suppose that $\delta:\mathscr{E}_{k}\times \mathscr{E}_{k}\to [0,\infty)$ is a pseudo-metric so that for each compactly supported contactomorphism $\phi$ there is $C(\phi)$ so that $\delta(\phi(\Lambda_{0}),\phi(\Lambda_{1}))\le C(\phi)\delta(\Lambda_{0},\Lambda_{1}).$ Then either $\delta=0$ or $\delta$ is non-degenerate.
\end{lemma}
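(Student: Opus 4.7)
The plan is to prove the contrapositive. Assume $\delta$ is degenerate, so there exist $\Lambda_0 \ne \Lambda_1$ in $\mathscr{E}_k$ with $\delta(\Lambda_0,\Lambda_1)=0$; I want to conclude $\delta\equiv 0$ on $\mathscr{E}_k$. The structure follows Chekanov's classical dichotomy for the Lagrangian Hofer distance, adapted to the contact setting: establish a local flexibility statement at $\Lambda_0$, transport it via quasi-invariance to every Legendrian in $\mathscr{E}_k$, then integrate over $\mathscr{E}_k$ by fragmentation.

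For the local flexibility, pick $p\in\Lambda_0\setminus\Lambda_1$ and a Darboux ball $U$ about $p$ with $U\cap\Lambda_1=\emptyset$. For every compactly supported contactomorphism $\phi$ supported in $U$, we have $\phi(\Lambda_1)=\Lambda_1$, so by quasi-invariance,
\[
\delta(\phi(\Lambda_0),\Lambda_1)=\delta(\phi(\Lambda_0),\phi(\Lambda_1))\le C(\phi)\,\delta(\Lambda_0,\Lambda_1)=0.
\]
The triangle inequality then gives $\delta(\Lambda_0,\phi(\Lambda_0))=0$.

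For the transport, let $\Lambda^*\in\mathscr{E}_k$ and $q\in\Lambda^*$. The Legendrian isotopy extension theorem (with compact-support cutoff, as invoked earlier in the excerpt) produces a compactly supported contactomorphism $\psi$ of $Y$ with $\psi(\Lambda_0)=\Lambda^*$; precomposing $\psi$ with an ambient contactomorphism that preserves $\Lambda_0$ setwise and moves $p$ to any prescribed point of the relevant component, one can further arrange $\psi(p)=q$. Applying quasi-invariance to $\psi$ and the previous paragraph gives $\delta(\Lambda^*,\tilde\phi(\Lambda^*))=0$ for every compactly supported $\tilde\phi$ supported in the Darboux ball $V=\psi(U)$ about $q$. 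Shrinking $U$ (while keeping it disjoint from $\Lambda_1$) makes $V$ arbitrarily small.

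Finally, cover the compact Legendrian $\Lambda^*$ by finitely many such balls $V_1,\ldots,V_N$ and invoke a standard fragmentation lemma for contactomorphisms: any compactly supported $\chi$ sufficiently $C^1$-close to $\mathrm{id}_Y$ factors as $\chi=\chi_M\cdots\chi_1$ with each $\chi_j$ supported in one of the $V_i$ (partition-of-unity decomposition of the generating contact Hamiltonian on a sufficiently fine time subdivision). An induction using the triangle inequality together with the transport step re-applied at each intermediate Legendrian $\chi_j\cdots\chi_1(\Lambda^*)$ yields $\delta(\Lambda^*,\chi(\Lambda^*))=0$. By the Weinstein neighborhood theorem, every $\Lambda'\in\mathscr{E}_k$ sufficiently $C^1$-close to $\Lambda^*$ arises this way, so the zero-set of $\delta(\Lambda^*,\cdot)$ is open in $\mathscr{E}_k$; since $\mathscr{E}_k$ is path-connected, this zero-set equals all of $\mathscr{E}_k$. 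The main technical obstacle is the iterated transport step: at each stage one must re-establish flexibility at the perturbed Legendrian with a Darboux ball contained in an initial $V_i$. This is arranged by taking the fragmentation fine enough that all intermediate Legendrians remain in a uniform $C^1$-neighborhood of $\Lambda^*$, so that the ``admissible'' balls produced by the transport step at each intermediate stage can be chosen to sit inside the original cover.
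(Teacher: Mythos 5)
Your argument is correct in outline and arrives at the same conclusion, but it takes a genuinely different route from the paper. Both proofs start from the same seed observation: with $p\in\Lambda_0\setminus\Lambda_\infty$, any compactly supported $\phi$ fixing $\Lambda_\infty$ gives $\delta(\Lambda_0,\phi(\Lambda_0))=0$ via quasi-invariance and the triangle inequality. The approaches diverge at the decomposition step. The paper works entirely in a $1$-jet tubular neighborhood of $\Lambda_0$ and decomposes the \emph{function}: a $C^1$-small $f$ is split by a partition of unity into pieces supported in fixed cubes, and the fiberwise translation $(q,p,z)\mapsto(q,p+\d g(q),z+g(q))$ --- which takes $(\Lambda_0,\Lambda_{f_1})$ to $(\Lambda_{f_2},\Lambda_{f_1+f_2})$ --- converts the additive decomposition into a chain of zero-distance steps without ever leaving a $C^1$-neighborhood of $\Lambda_0$. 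Because the reference Legendrian in this chain is always a $1$-jet of a known small function, there is nothing to ``re-establish'' along the way. You instead decompose the \emph{contactomorphism} by fragmentation, which forces you to track intermediate Legendrians $\chi_{j}\cdots\chi_1(\Lambda^*)$ and to re-derive the local flexibility at each one with balls that still swallow the support of $\chi_{j+1}$. You correctly identify this as the technical crux, but the fix you sketch needs more care than stated: you need a shrunk cover $V_i'\Subset V_i$, supports of the $\chi_j$'s inside the $V_i'$, and a quantitative statement that the conjugating contactomorphisms stay close enough to the identity that the pushed-forward admissible balls contain the $V_i'$ (note the containment must go in that direction, not the one written). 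You also invoke a fragmentation lemma for contactomorphisms with $C^1$-smallness control on the factors; this is true but is itself nontrivial and should be cited or proved, whereas the paper's partition-of-unity step is elementary. In short: both proofs are instances of the Chekanov dichotomy machinery, but the paper's use of the abelian fiberwise structure of $J^1(\Lambda_0)$ sidesteps exactly the iteration subtleties your fragmentation approach must confront, and is cleaner for that reason.
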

The proof of Lemma \ref{lemma:chekanov_leg} is exactly the argument used in \cite[Theorem 2]{chekanov_2000}. See also \cite{zhang_rosen}. This dichotomy suggests the following question:
\begin{question}
  Is $\mathrm{dist}_{\alpha,\mathscr{I}}$ non-degenerate on any isotopy class $\mathscr{E}_{k}$?
\end{question}

\subsection{The oscillation norm and translated points}
\label{sec:translated points}

Shelukhin relates the oscillation norm $\abs{\varphi}^{\mathrm{osc}}_{\alpha}$ to the existence of translated points of the contactomorphism $\varphi$. By a cut-off argument relating contactomorphisms of $Y$ to compactly supported Hamiltonian diffeomorphisms of $SY$, \cite[Theorem B]{shelukhin_contactomorphism} uses earlier results of \cite{leaf_wise_albers_frauenfelder} to conclude:
\begin{equation}\label{eq:shelukhin_conj}
  \abs{\varphi}^{\mathrm{osc}}_{\alpha}<2(\text{spectral gap of $\alpha$})\implies
  \begin{minipage}[c][1.4cm][c]{0.5\linewidth}
    $\varphi$ has at least one translated point, with at least $\dim H_{*}(Y,\Z/2)$ in the case all the translated points are non-degenerate.
  \end{minipage}
\end{equation}
Here the \emph{spectral gap} is the minimal positive action of a Reeb orbit of $(Y,\alpha)$. Because of the Rabinowitz Floer homology used in \cite{leaf_wise_albers_frauenfelder}, Shelukhin's argument assumes that $Y$ admits an exact filling. His result leads naturally to the following conjecture:
\begin{conj}[Conjecture 31 in \cite{shelukhin_contactomorphism}]\label{conj:shelukhin_conj}
  If $\varphi$ is a contactomorphism of an arbitrary compact contact manifold $Y$, then the implication \eqref{eq:shelukhin_conj} holds.
\end{conj}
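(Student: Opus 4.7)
The plan is to follow Shelukhin's strategy from \cite{shelukhin_contactomorphism} but to replace Rabinowitz Floer homology with a Floer-theoretic construction that does not require an exact filling. The starting observation is that translated points of $\varphi$ are in bijection with leaf-wise intersections of a compactly supported Hamiltonian lift $\tilde \varphi$ of $\varphi$ to the symplectization $SY$, so the whole problem can be formulated intrinsically on $SY$. What Rabinowitz Floer homology supplies in the fillable case is (i) a chain complex whose generators are closed Reeb orbits and leaf-wise intersections, (ii) action bounds on those generators in terms of the Hofer/oscillation norm of a generating Hamiltonian for $\tilde \varphi$, and (iii) a non-vanishing class that makes a pigeonhole argument possible. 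The goal is to reproduce all three ingredients for an arbitrary closed contact manifold.

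Ingredients (i) and (ii) are essentially formal. I would write the Rabinowitz action functional on the free loop space of $SY$ perturbed by $\tilde \varphi$; its critical-point analysis and action-window estimates are intrinsic to $SY$ and carry over verbatim from \cite{shelukhin_contactomorphism}. For (iii), I would attempt to construct an SFT-type Rabinowitz Floer homology along the lines of Bourgeois-Oancea or Cieliebak-Frauenfelder, built directly in $SY$ and permitting holomorphic cylinders with asymptotic ends on Reeb orbits of $Y$. A continuation map from the unperturbed problem (whose critical set is a Morse-Bott copy of $Y$) would then produce a non-vanishing fundamental class, after which Shelukhin's pigeonhole argument forces leaf-wise intersections of $\tilde \varphi$, that is, translated points of $\varphi$. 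The lower bound of $\dim H_{*}(Y,\Z/2)$ in the non-degenerate case would come from the standard Morse-type inequality between the Floer complex of $\tilde \varphi$ and the Morse complex of a small function on $Y$.

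The main obstacle, and essentially the reason the conjecture remains open, is compactness of the relevant moduli spaces. In the symplectization of a general closed contact manifold, SFT-compactness breakings can produce Reeb orbits of unbounded action with no filling to obstruct them, so one either needs a virtual perturbation framework (polyfolds, or Pardon's VFC) or one must restrict to a class of contact manifolds where classical transversality suffices (hypertight, asymptotically dynamically convex, or manifolds admitting a positive symplectic cobordism to a fillable one). A reasonable stepping stone, and the first thing I would actually write down, is to prove the conjecture in the hypertight case: the action filtration is well-behaved, Reeb orbits of small action are isolated from the leaf-wise intersections by the spectral gap, and the rest of the argument goes through without virtual techniques. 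Extending this to arbitrary compact $Y$ would then be attempted via a neck-stretching or cobordism comparison, which I expect to be the genuinely hard step.
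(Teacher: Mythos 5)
This statement is a \emph{conjecture} in the paper, not a theorem: the paper offers no proof of it, and indeed explicitly refers to it as an open problem (Conjecture~31 of Shelukhin). So there is no ``paper's own proof'' to compare your attempt against. Your text is honest about this---you correctly identify that the obstruction in the general compact case is SFT-type compactness / transversality in symplectizations without a filling, and you do not claim to have closed that gap. What you have written is a plausible research program (reproduce the Rabinowitz--Floer ingredients intrinsically on $SY$, attempt the hypertight case first), not a proof, and it should be presented as such.

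For context, the paper's own discussion of this conjecture points in a somewhat different direction. Following Oh's Legendrianization, one translates translated points of $\varphi$ into Reeb chords between $\Lambda(\mathrm{id})$ and $\Lambda(\varphi)$ in the contactization $Q(Y) \cong (Y \times SY, \alpha_0 - \lambda_1)$, so that the conjecture would follow if one could show $Q(Y)$ is LDR in the sense of Definition~\ref{defn:LDR}. But the paper's Theorem~\ref{theorem:nonLDR} constructs compact contact manifolds $Y_1$ (overtwisted, with symplectizations containing compact exact Lagrangians) for which the corresponding contactizations are \emph{not} LDR. This is not a disproof of the conjecture---$Q(Y)$ for $Y$ closed and standard may still be LDR---but it shows that the naive ``prove LDR for all contactizations'' strategy cannot work without hypotheses on $Y$, and it pinpoints an obstruction (compact exact Lagrangians in $SY$) that your Floer-theoretic sketch does not engage with. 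If you pursue the Rabinowitz--Floer route, you should at least check that your proposed invariant is sensitive to this phenomenon, since any correct proof must implicitly rule it out for standard closed $Y$.
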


The preprint \cite{oh_legendrian_entanglement} suggests a way to resolve Shelukhin's conjecture via a \emph{Legendrianization} of the problem. 

The crux of Oh's argument is to relate the translated points of a contactomorphism to the Reeb chords of a certain Legendrian. We briefly explain the construction. Recall that any contactomorphism lifts to an equivariant symplectomorphism of $SY$, and hence defines an $\R$-invariant Lagrangian inside $SY\times SY$ with the exact symplectic form $\d \lambda_{0}-\d \lambda_{1}$. It is well-known that $\R$-invariant Lagrangians in an exact symplectic manifold project to Legendrians after quotienting by the Liouville flow. In this case the quotient $Q(Y)=(SY\times SY)/\R$ is a well-defined contact manifold (as the Liouville flow is free and proper) and so the Lagrangian graph of an equivariant lift of a contactomorphism $\varphi$ projects to Legendrian $\Lambda(\varphi)\subset Q(Y)$. This procedure of relating contactomorphisms to Legendrians was previously considered in \cite[\S 5]{lychagin_77}, \cite{chekanovQF}, \cite{bhupal}, \cite{sandon_notes}, \cite{sandon_12}, \cite{colin_sandon} \cite{guogang_liu_positive_loops}.

It is not hard to see that a choice of contact form $\alpha_{0}$ on $Y$ establishes a contactomorphism between $Q(Y)$ and the contactization $(Y\times SY,\alpha_{0}-\lambda_{1})$ described above in \S\ref{sec:non-LDR}. Moreover, it is straightforward to verify that, for this contact form, the translated points of $\varphi$ are in natural bijection with Reeb chords between $\Lambda(\id)$ and $\Lambda(\varphi)$.

This perhaps suggests that Shelukhin's conjecture might follow if one could prove that $Q(Y)$ was LDR. However, our Theorem \ref{theorem:nonLDR} shows that these contactizations are not LDR for certain exotic $Y$.

\subsubsection*{Acknowledgements}
\label{sec:ack}
I want to thank Egor Shelukhin, Octav Cornea, Jakob Hedicke, and Filip Broci{\'c} for useful discussions during the writing of this paper. This work was completed at the University of Montreal with funding from the CIRGET research group.

\section{Proofs of Theorems}
\label{sec:proofs}

Theorems \ref{theorem:mohnke}, \ref{theorem:nonLDR}, \ref{theorem:generating_function}, \ref{theorem:immersion_disjoining}, and \ref{theorem:imm_dichotomy} are proved in \S\ref{sec:theorem_mohnke}, \S\ref{sec:proof_nonLDR}, \S\ref{sec:proof_gf}, \S\ref{sec:proof_immersion}, and \S\ref{sec:imm_dichotomy_proof}, respectively. Proposition \ref{prop:rizell_sullivan} is proved in \S\ref{sec:proof_conjecture}.

\subsection{On the strict LDR property for quasi-fillable contact manifolds}
\label{sec:theorem_mohnke}
Recall the following \emph{Lagrangianization} of a Legendrian $\Lambda\subset (Y,\alpha)$ due to \cite{klaus_chord}. Fix a small $\delta>0$, and, for $k>1$, consider the ``rectangle''
\begin{equation*}
  \Phi:(s,t,x)\in [0,\log(k)]\times [0,T]\times \Lambda\mapsto e^{s}\rho_{t}(x)\in SY,
\end{equation*}
where $\rho_{t}$ is the time $t$ Reeb flow and multiplication by $e^{s}$ denotes the time $s$ Liouville flow in $SY$. Consider $\Lambda$ as sitting in the copy of $Y$ in $SY$ corresponding to the choice of contact form $\alpha$. As a consequence $\Phi^{*}\lambda=e^{s}\d t$.

The restriction of $\Phi$ to the boundary of the rectangle is called a \emph{Lagrangian tetragon} in \cite{tetragons}. We consider a smoothed version of these tetragons, as in \cite{klaus_chord}.

Let $L_{k,\delta}(\Lambda)$ be a Lagrangian obtained by restricting $\Phi$ to a subset of the form $\Gamma\times \Lambda$ where $\Gamma\subset [0,\log(k)]\times [0,T]$ is an embedded loop so $\int_{\Gamma} e^{s}\d t=e^{-\delta}(k-1)T$ (the area of $[0,\log(k)]\times [0,T]$ with the symplectic form $e^{s}\d s\wedge \d t$ is $(k-1)T$). Note that $L_{k,\delta}(\Lambda)$ is embedded if $T$ is smaller than the action of all Reeb chords of $\Lambda$. Of course, if the Reeb flow is not complete, then we may be forced to pick $T$ very small, irrespective of the Reeb chords of $\Lambda$.

The key observation is the following:
\begin{lemma}\label{lemma:key_1}
  If $\phi:Y\to Y$ is a strict contactomorphism which disjoins $\Lambda$, i.e., no chords between $\Lambda$ to $\phi(\Lambda)$, then any lift of $\phi$ to $\phi':SY\to SY$ displaces $L_{k,\delta}(\Lambda)$.
\end{lemma}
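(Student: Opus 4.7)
The plan is to project everything down to $Y$ via the bundle map $\pi:SY\to Y$ and reduce the displacement statement to the no-chord hypothesis. Since $\Phi(s,t,x)=e^{s}\rho_{t}(x)$ satisfies $\pi\circ\Phi(s,t,x)=\rho_{t}(x)$, and $\Gamma$ surjects onto $[0,T]$ under projection to the $t$-axis, the image $\pi(L_{k,\delta}(\Lambda))$ is exactly the ``Reeb slab'' $\bigcup_{t\in[0,T]}\rho_{t}(\Lambda)\subset Y$.

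Next, I would exploit the strictness of $\phi$: the identity $\phi^{*}\alpha=\alpha$ says $\phi$ preserves the Reeb vector field, hence commutes with the Reeb flow, i.e., $\phi\circ\rho_{t}=\rho_{t}\circ\phi$. Any equivariant lift $\phi':SY\to SY$ satisfies $\pi\circ\phi'=\phi\circ\pi$, so
\begin{equation*}
  \pi(\phi'(L_{k,\delta}(\Lambda)))=\phi\Bigl(\bigcup_{t\in[0,T]}\rho_{t}(\Lambda)\Bigr)=\bigcup_{t\in[0,T]}\rho_{t}(\phi(\Lambda)).
\end{equation*}
Importantly, the right-hand side is independent of which equivariant lift was chosen.

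Finally, suppose for contradiction that $p\in L_{k,\delta}(\Lambda)\cap \phi'(L_{k,\delta}(\Lambda))$. Then $\pi(p)$ lies in both Reeb slabs above, so there exist $x_{1},x_{2}\in \Lambda$ and $t_{1},t_{2}\in [0,T]$ with $\rho_{t_{1}}(x_{1})=\rho_{t_{2}}(\phi(x_{2}))$. Assuming $t_{1}\ge t_{2}$, I rearrange to $\rho_{t_{1}-t_{2}}(x_{1})=\phi(x_{2})$, which is a Reeb chord from $\Lambda$ to $\phi(\Lambda)$ of length $t_{1}-t_{2}>0$ (or records an intersection $\Lambda\cap\phi(\Lambda)\ne\emptyset$ if $t_{1}=t_{2}$). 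Either alternative contradicts the hypothesis, forcing $L_{k,\delta}(\Lambda)\cap\phi'(L_{k,\delta}(\Lambda))=\emptyset$.

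I do not anticipate a substantial obstacle: the projection collapses the Liouville coordinate entirely, so the question reduces cleanly to the chord hypothesis and is insensitive to the choice of lift. The only routine point to verify is the commutation $\phi\circ\rho_{t}=\rho_{t}\circ\phi$ arising from strictness, which is a standard consequence of $\phi^{*}\alpha=\alpha$ and the characterization of the Reeb vector field by $\alpha(R)=1$, $\d\alpha(R,\cdot)=0$.
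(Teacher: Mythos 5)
Your proof is correct and follows essentially the same route as the paper's: project a hypothetical intersection point to $Y$ via $\pi$, use strictness to commute $\phi$ with the Reeb flow, and extract a Reeb chord from $\Lambda$ to $\phi(\Lambda)$, contradicting the hypothesis. The extra packaging via the ``Reeb slab'' and the explicit case split on the sign of $t_1-t_2$ are just more verbose framings of the same computation.
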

\begin{proof}
  If $\phi'(L_{k,\delta}(\Lambda))\cap L_{k,\delta}(\Lambda)$ is non-empty, then we can find $(s,t,x)$ and $(s',t',x')$ so that $\phi'(e^{s}\rho_{t}(x))=e^{s'}\rho_{t'}(x').$ Since $\phi'$ is a lift of $\phi$, we conclude that $\phi(\rho_{t}(x))=\rho_{t'}(x')$. The strictness of $\phi$ implies that $\phi\rho_{t}=\rho_{t}\phi$, thus $\phi(x)=\rho_{t'-t}(x')$. But this contradicts the hypothesis that $\phi$ disjoins $\Lambda$. This completes the proof.
\end{proof}

In general, contactomorphisms of $Y$ have unique lifts to equivariant symplectomorphisms of $SY$, and the lift of a contact isotopy is a Hamiltonian isotopy. One of the results of \cite{shelukhin_contactomorphism} is a useful cut-off theorem:
\begin{prop}\label{prop:shelukhin}
  Given a compactly supported contact isotopy $\phi_{t}$ with oscillation energy $E$, so that $H_{t}^{-1}(0)\ne \emptyset$ for each $t$, there is a compactly supported Hamiltonian isotopy $\psi_{t}:SY\to SY$ so that:
  \begin{enumerate}
  \item $\psi_{1}(L_{k,\delta}(\Lambda))=\phi_{1}'(L_{k,\delta}(\Lambda))$, where $\phi_{1}'$ is the equivariant lift of $\phi_{1}$.
  \item the (Hofer) oscillation energy of $\psi_{t}$ is bounded by $e^{\delta}kE$.
  \end{enumerate}
\end{prop}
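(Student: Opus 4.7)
The plan is to build $\psi_{t}$ as the Hamiltonian flow on $SY$ of a compactly supported cutoff of the Hamiltonian generating the equivariant lift $\phi_{t}'$. Identifying $SY \cong Y \times (0,\infty)$ with Liouville form $\lambda = r\alpha$, a direct verification of $\iota_{X} d\lambda = -dK$ shows that $\phi_{t}'$ is the Hamiltonian flow of $K_{t}(y,r) = r H_{t}(y)$, which is unbounded in $r$. I would set $\tilde K_{t}(y,r) = \beta(r) r H_{t}(y)$ for a compactly supported bump $\beta\colon (0,\infty) \to [0,1]$, and take $\psi_{t}$ to be the Hamiltonian flow of $\tilde K_{t}$.

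The bump $\beta$ should satisfy two design constraints: (a) $\beta \equiv 1$ on an open neighborhood of the $r$-projection of $\bigcup_{t \in [0,1]} \phi_{t}'(L_{k,\delta}(\Lambda))$, and (b) $\sup_{r>0} \beta(r) r \leq e^{\delta} k$. Constraint (a) ensures that $\tilde K_{t}$ and $K_{t}$ have the same Hamiltonian vector field along the trajectory of $L_{k,\delta}(\Lambda)$, so $\psi_{t}$ agrees with $\phi_{t}'$ on $L_{k,\delta}(\Lambda)$ for all $t$, giving conclusion (i). Constraint (b) is arranged by tapering $\beta$ smoothly to zero on an outer interval $[k, k e^{\delta}]$ and on an inner interval approaching $0$, so that $\beta(r) r \leq r \leq k e^{\delta}$ throughout $\mathrm{supp}(\beta)$.

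For conclusion (ii), the hypothesis $H_{t}^{-1}(0) \neq \emptyset$ forces $\min_{Y} H_{t} \leq 0 \leq \max_{Y} H_{t}$. Since $\beta(r) r \geq 0$ everywhere, this gives
\begin{equation*}
  \max_{SY} \tilde K_{t} \leq \bigl(\sup_{r} \beta(r) r \bigr)\max_{Y} H_{t}, \qquad \min_{SY} \tilde K_{t} \geq \bigl(\sup_{r} \beta(r) r\bigr) \min_{Y} H_{t},
\end{equation*}
hence $\mathrm{osc}(\tilde K_{t}) \leq \bigl(\sup_{r} \beta(r) r\bigr) \mathrm{osc}(H_{t}) \leq e^{\delta} k \cdot \mathrm{osc}(H_{t})$, and integrating over $t \in [0,1]$ yields the stated Hofer bound $e^{\delta} k E$.

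The main obstacle is reconciling constraints (a) and (b): the trajectory $\bigcup_{t} \phi_{t}'(L_{k,\delta}(\Lambda))$ must fit into the strip $\{r \in (r_{-}, k e^{\delta})\}$, but under the equivariant lift the $r$-coordinate scales by the conformal factor $e^{-g_{t}}$ of $\phi_{t}$, which is not a priori controlled by $E$. I would handle this by first modifying $\phi_{t}$ within its endpoint-preserving homotopy class---replacing it by a path of contactomorphisms with the same endpoints, essentially the same oscillation energy, and a tame conformal factor on the $Y$-projection of $L_{k,\delta}(\Lambda)$---following the standard reparametrization step of the cutoff argument in \cite{shelukhin_contactomorphism}; equivalently one can pre-conjugate $\phi_{t}'$ with a time-dependent Liouville rescaling whose cost is absorbed into the factor $e^{\delta}$. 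Once this adjustment is in place, the cutoff $\tilde K_{t}$ above produces the desired $\psi_{t}$.
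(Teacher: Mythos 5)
Your setup is the right starting point and matches the first half of Shelukhin's argument: the equivariant lift $\phi_t'$ is indeed the flow of $K_t(y,r)=rH_t(y)$, the radial cutoff $\tilde K_t=\beta(r)rH_t(y)$ is the natural candidate, and your oscillation estimate $\operatorname{osc}(\tilde K_t)\le\bigl(\sup_r\beta(r)r\bigr)\operatorname{osc}(H_t)$ using $\min H_t\le 0\le\max H_t$ is correct. But the obstacle you flag in the final paragraph is a genuine gap, and the two fixes you gesture at do not close it. Constraint (a) asks for $\beta\equiv 1$ on the $r$-projection of $\bigcup_t\phi_t'(L_{k,\delta}(\Lambda))$; under the lift, the $r$-coordinate scales by $e^{-g_t}$ where $\phi_t^*\alpha=e^{g_t}\alpha$, and $g_t$ is driven by the Reeb derivative $dH_t(R_\alpha)\circ\phi_t$ -- a quantity completely uncontrolled by the oscillation energy $E=\int(\max H_t-\min H_t)\,dt$. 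So for a general compactly supported isotopy with small $E$ the trajectory can leave any fixed slab $\{r\le e^{\delta}k\}$, and (a), (b) become incompatible. Your "pre-conjugation by a Liouville rescaling" proposal does not rescue this: conjugating a compactly supported Hamiltonian by $L_c\colon(y,r)\mapsto(y,e^{c}r)$ multiplies its Hofer oscillation by exactly $e^{-c}$, and the $c<0$ needed to compress an escaping trajectory into $\{r\le e^{\delta}k\}$ is dictated by the (uncontrolled) conformal factor, not by $\delta$ -- so the cost cannot be "absorbed into $e^{\delta}$," which the application requires to be arbitrarily small. Likewise, "replace $\phi_t$ by an endpoint-preserving path with tame conformal factor and essentially the same oscillation energy" is not a standard manipulation; it is precisely the nontrivial content one would need to prove.

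For comparison, the paper does not attempt a self-contained argument here at all: its proof of this proposition is a one-line citation to \cite[Proposition 41 and Lemma 42]{shelukhin_contactomorphism}, where the interaction between the conformal factor, the radial cutoff, and the Hofer bound is treated carefully. Your attempt is more ambitious in trying to supply the construction directly, and the computational pieces you do provide are sound, but the step that reconciles the escaping trajectory with the sharp Hofer bound $e^{\delta}kE$ is exactly the heart of Shelukhin's lemmas and is missing from your write-up. Until that step is supplied with an actual argument (rather than a pointer to it), the proof is incomplete.
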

\begin{proof}
  This follows from \cite[Proposition 41 and Lemma 42]{shelukhin_contactomorphism}. 
\end{proof}

Next, recall Chekanov's result on the lower bound for the displacement energy of a compact Lagrangian in a tame symplectic manifold:
\begin{prop}[\cite{chekanov_1998}]
  Let $L\subset X$ be a compact Lagrangian in a tame symplectic manifold $X$, and let $\sigma(X,L)$ be the infimum of positive symplectic areas of disks with boundary on $L$. Then any compactly supported Hamiltonian isotopy displacing $L$ has Hofer oscillation energy at least $\sigma(X,L)$.
\end{prop}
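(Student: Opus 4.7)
The plan is to follow Chekanov's filtered Lagrangian Floer homology argument. Suppose for contradiction that there is a compactly supported Hamiltonian isotopy $\phi_{t}$ with generating Hamiltonian $H_{t}$ displacing $L$ and having Hofer oscillation energy $E<\sigma(X,L)$. I will compare $HF(L,L)$, which is non-trivial, to $HF(L,\phi_{1}(L))$, which vanishes since $L\cap \phi_{1}(L)=\emptyset$, and derive a contradiction.

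First I would fix a compatible almost complex structure $J$ adapted to the tame structure of $X$; tameness supplies $C^{0}$ estimates on $J$-holomorphic curves of bounded energy that would otherwise escape to infinity, so compactness of Floer moduli spaces reduces to controlling bubbling. By the definition of $\sigma(X,L)$, any holomorphic disk with boundary on $L$ has area at least $\sigma(X,L)$, and likewise (via Gromov) any holomorphic sphere bubble in $X$ has area at least $\sigma(X,L)$ or can be ignored after suitable generic perturbation in a tame setting.

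Second, I would set up the Floer complex $CF(L,\phi_{1}(L))$ with an action filtration. Perturbing $L$ by a $C^{2}$-small Morse function $f$ identifies $HF(L,L)\cong H_{*}(L)$ in the usual PSS/Floer--Morse way. A standard energy estimate shows that Floer continuation strips interpolating between the two boundary conditions have symplectic area bounded above by $E$ (with appropriate sign conventions using the $\max$ and $-\min$ of the generating Hamiltonian). Restricting attention to an action window of length slightly larger than $E$ but still smaller than $\sigma(X,L)$, no bubble can appear along the continuation, since every bubble costs at least $\sigma(X,L)>E$ of area, exceeding the available budget. Hence the continuation maps in this window are well-defined and induce an isomorphism on the filtered homologies.

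The principal obstacle is obtaining the sharp bound $E$ (the \emph{oscillation} energy) rather than a larger quantity such as the $L^{(1,\infty)}$ norm. This requires Chekanov's trick of reparametrizing the isotopy in time so that $H_{t}$ becomes monotone in a suitable sense, combined with a careful splitting of the a priori area estimate into $\int_{0}^{1}\max_{X} H_{t}\,dt$ and $\int_{0}^{1}(-\min_{X} H_{t})\,dt$ via integration by parts on the Floer strip. With this sharp estimate in hand, the argument concludes by observing that $H_{*}(L)\ne 0$ cannot be isomorphic to the zero homology forced by $L\cap \phi_{1}(L)=\emptyset$, contradicting the assumption $E<\sigma(X,L)$.
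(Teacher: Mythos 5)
The paper gives no proof of this proposition: it is stated as a citation to Chekanov's 1998 Duke paper and used as a black box in the proof of Theorem~\ref{theorem:mohnke}. So there is no in-paper argument for your proposal to match. That said, a few remarks on the sketch itself.

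Your outline is the modern ``filtered Floer homology / continuation map'' packaging of the result, which is closer to how it appears in later expositions (Oh, Usher) than to Chekanov's original argument. Chekanov does not set up a Floer chain complex and compare $HF(L,L)$ with $HF(L,\phi_{1}(L))$; he works more directly with a one-parameter family of moduli spaces of $J$-holomorphic strips with moving Lagrangian boundary conditions, in the spirit of Gromov, and argues via a cobordism/degree count that either an intersection point persists or a bubble of small area is produced. Both routes are legitimate, but you should be aware your account is a reformulation rather than a transcription.

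Two points in your sketch need more care. First, as stated in the paper $\sigma(X,L)$ is the infimum of positive areas of \emph{disks with boundary on $L$}, which does not a priori control holomorphic spheres in the interior of a strip; Chekanov's lower bound is actually the minimum of the minimal sphere area and the minimal disk area, and your phrase ``or can be ignored after suitable generic perturbation in a tame setting'' is not a substitute for that --- sphere bubbling cannot be generically excluded in this generality. Second, the paragraph about ``Chekanov's trick of reparametrizing the isotopy in time so that $H_{t}$ becomes monotone'' is not an accurate description of how the sharp oscillation bound is obtained: the estimate $\mathrm{Area}(u)\leq \int_{0}^{1}(\max H_{t}-\min H_{t})\,\d t$ for strips with one fixed and one moving boundary follows from a direct Stokes/curvature computation for the continuation data, with no reparametrization of time required; invoking a reparametrization trick here is a red herring and should be replaced by the actual energy identity for moving-boundary continuation maps.
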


Theorem \ref{theorem:mohnke} is proved as follows. First assume $Y$ is aspherically quasi-fillable and has a complete Reeb flow. In search of a contradiction, suppose that $\phi_{t}$ is a compactly supported contact isotopy\footnote{If $Y$ is compact, we can change $\phi_{t}$ to $\rho_{st}\circ \phi_{t}$, where $\rho$ is the Reeb flow and $s\in \R$, in order to achieve that $H_{t}^{-1}(0)\ne \emptyset$ for each $t$.} so that $\phi_{1}$ is strict and disjoins $\Lambda$, with oscillation energy $E$ smaller than the action of the shortest Reeb chord of $\Lambda$. Then we can pick $T>E$ in the definition of $L_{k,\delta}(\Lambda)$. 

The minimal positive symplectic area of a disk with boundary on $L_{k,\delta}(\Lambda)$ in $SY$ is $e^{-\delta}(k-1)T$ (this argument is also used in \cite{klaus_chord}). The aspherically quasi-fillable assumption implies an embedding $SY\to X$ into a tame symplectic manifold which satisfies $\pi_{2}(X,SY)=0$. It follows that $\sigma(X,L_{k,\delta}(\Lambda))=e^{\delta}(k-1)T$. On the other hand, Proposition \ref{prop:shelukhin} and Lemma \ref{lemma:key_1} guarantee a Hamiltonian isotopy $\psi_{t}$ so that $\psi_{1}$ displaces $L_{k,\delta}(\Lambda)$ and $\psi_{t}$ has oscillation energy at most $e^{\delta}kE$. Since $E<T$, for $k$ large enough and $\delta$ small enough, we have $e^{\delta}kE<e^{-\delta}(k-1)T$, contradicting Chekanov's result. This contradiction completes the proof of Theorem \ref{theorem:mohnke}.

In the case when $Y$ is not assumed to have a complete Reeb flow, or be aspherically fillable, then proceed as above, and take any of the Lagrangians $L_{k,\delta}(\Lambda)$ for a small $T$. There is a contradiction if $e^{\delta}kE<\sigma(X,L_{k,\delta}(\Lambda))$. Thus we can take:
\begin{equation*}
  C(\Lambda,\alpha)=\frac{\sigma(X,L_{k,\delta}(\Lambda))}{e^{\delta}k}.
\end{equation*}
This constant may be non-optimal, but it depends only on $\Lambda=\Lambda_{0}$ (and some auxiliary choices) and not on any disjoining isotopy $\Lambda_{t}$, as desired.

\subsubsection{Comments on the argument}
\label{sec:comments}

Note that only $\phi_{1}$ is required to be strict, rather than the whole isotopy $\phi_{t}$ being strict. Moreover, we only require that:
\begin{equation}\label{eq:spec_1}
s\in [0,T]\implies \phi_{1}(\rho_{s}(\Lambda))=\rho_{s'}(\phi_{1}(\Lambda))
\end{equation}
for some $s'(s)$.

Given any isotopy $\Lambda_{t}$, there is an extension $\phi_{t}$ so that $\phi_{1}(\rho_{s}(\Lambda))=\rho_{s}(\phi_{1}(\Lambda))$ for $s\in [0,\epsilon]$, where $\epsilon$ depends on the \emph{shortest chord of $\Lambda_{t}$, $t\in [0,1]$}. By preconcatenating $\phi_{t}$ with an isotopy which ``compresses'' $\rho_{[0,T]}(\Lambda)$ into $\rho_{[0,\epsilon]}(\Lambda)$, \eqref{eq:spec_1} can be achieved. However, these compression isotopies appear to require oscillation energy at least $T-\epsilon$. This seems to be right on the borderline and the author was unable to use this to prove that aspherically quasi-fillable contact manifolds are (non-strict) LDR.

\subsection{Proof of Theorem \ref{theorem:nonLDR}}
\label{sec:proof_nonLDR}

The construction has a few steps, broken up into subsections. Let $Y_{0}^{2n+1},Y_{1}^{2m+1}$ be contact manifolds, with a choice of form $\alpha_{0}$ for $Y_{0}$, and let $Q=Y_{0}\times SY_{1}$ be considered as a contact manifold with form $A=\alpha_{0}-\lambda_{1}$. For later use, observe that the Reeb vector field for $(Q,A)$ is just the Reeb field for $(Y_{0},\alpha_{0})$, i.e., is tangent to the fibers of the projection $Q\to SY_{1}$. 


\subsubsection{Lifting exact Lagrangians in $SY_1$ to Legendrians in $Q$.}
\label{sec:lifting}
Suppose $j:L\to SY_{1}$ is an exact Lagrangian embedding, in the sense that $j^{*}\lambda_{1}=\d f$ for a smooth function $f:L\to \R$. The following geometric construction, depending on an auxiliary choice of Legendrian in $Y_{0}$, lifts exact Lagrangians in $SY_{1}$ to Legendrians in $Q$.

\begin{lemma}
  Let $L,j,f$ be as above. Pick a Legendrian embedding $i:\Lambda_{0}\to Y_{0}$, and let $\rho_{t}:Y_{0}\to Y_{0}$ be the time $t$ $\alpha_{0}$-Reeb flow. Then:
  \begin{equation}\label{eq:formula_legendrian}
    J:(x,y)\in \Lambda_{0}\times L \mapsto (\rho_{f(y)}(i(x)),j(y))\in Y_{0}\times SY_{1}=Q
  \end{equation}
  defines a Legendrian embedding into $(Q,A)$.
\end{lemma}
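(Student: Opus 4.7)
The plan is to verify three properties of the map $J(x,y) = (\rho_{f(y)}(i(x)), j(y))$: that its domain has the correct Legendrian dimension, that it is a smooth embedding, and that $J^{*}A = 0$. The dimensions check out immediately, since $\dim(\Lambda_{0}\times L) = n+(m+1) = n+m+1$, which is half of $\dim Q - 1$.

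For embeddedness, I would decompose $J = (\Phi, j\circ\pi_{L})$ where $\Phi(x,y) = \rho_{f(y)}(i(x))$ and $\pi_{L}$ is the projection to $L$. Injectivity follows by reading off the coordinates: the $SY_{1}$-component determines $y$ via injectivity of $j$, hence fixes $f(y)$, after which the $Y_{0}$-component determines $x$ via injectivity of $i$ (using that $\rho_{f(y)}$ is a diffeomorphism). The same coordinate-wise argument shows $\d J$ is injective. Continuity of the inverse on the image is similar, so $J$ is a smooth embedding.

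The contact condition is the main computation. Since $\rho_{t}$ is the Reeb flow of $\alpha_{0}$, it is a strict contactomorphism, so $\rho_{t}^{*}\alpha_{0} = \alpha_{0}$ for each fixed $t$. Differentiating $\Phi$ along a tangent vector $(\xi,\eta)$ gives
\[
\d\Phi(\xi,\eta) = \d\rho_{f(y)}\bigl(\d i(x)\xi\bigr) + \d f(y)(\eta)\cdot R_{\alpha_{0}}\bigl(\Phi(x,y)\bigr),
\]
the second term accounting for the variation of the Reeb time as $y$ varies. Pairing with $\alpha_{0}$ and using $\alpha_{0}(R_{\alpha_{0}})=1$, the strict invariance $\rho_{t}^{*}\alpha_{0}=\alpha_{0}$, and $i^{*}\alpha_{0}=0$ (since $\Lambda_{0}$ is Legendrian), gives $\Phi^{*}\alpha_{0} = \d f$, viewed as a form on $\Lambda_{0}\times L$ via the projection to $L$. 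On the other hand, $J^{*}\lambda_{1} = j^{*}\lambda_{1} = \d f$ by the exact Lagrangian hypothesis. Therefore $J^{*}A = \Phi^{*}\alpha_{0} - j^{*}\lambda_{1} = \d f - \d f = 0$.

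The only subtle step is the Reeb-direction computation of $\d\Phi$; everything else is bookkeeping. The construction of $J$ is really designed so that the $\d f$ produced by varying the Reeb time in the $Y_{0}$ factor exactly cancels the $\d f$ coming from the primitive of $j^{*}\lambda_{1}$, and the sign in $A = \alpha_{0}-\lambda_{1}$ is precisely what makes this cancellation possible.
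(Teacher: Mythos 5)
Your proof is correct. The paper's ``proof'' of this lemma is literally ``Left to the reader,'' and the direct verification you supply --- computing $\d\Phi(\xi,\eta) = \d\rho_{f(y)}(\d i(x)\xi) + \d f(y)(\eta)\,R_{\alpha_{0}}$, using strictness of the Reeb flow and the Legendrian condition $i^{*}\alpha_{0}=0$ to get $\Phi^{*}\alpha_{0} = \pi_{L}^{*}\d f$, which then cancels against $(j\circ\pi_{L})^{*}\lambda_{1}$ in $J^{*}A$ --- is exactly the intended calculation, with the embeddedness and dimension checks being immediate.
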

\begin{proof}
  Left to the reader. 
\end{proof}

\subsubsection{Lifting Hamiltonian isotopies of $SY_1$ to contact isotopies of $Q$}
\label{sec:lifting_ham}

Let $X_{t}$ be a compactly supported time dependent Hamiltonian vector field on $SY_{1}$, and define the Hamiltonian by the formula $X_{t}\intprod \d\lambda_{1}=-\d H_{t}$. Let $\varphi_{t}$ be the time $t$ flow of this non-autonomous system.

\begin{lemma}
  Let $K_{t}:Q\to \R$ be the lift of $-H_{t}$, i.e., $K_{t}=-H_{t}\circ \pr_{1}$. Then the unique contact vector field $N_{t}$ on $Q$ satisfying $N_{t}\intprod A=K_{t}$ generates an $A$-strict contact isotopy lifting $\varphi_{t}$.
\end{lemma}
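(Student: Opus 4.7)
The plan is to explicitly decompose the contact vector field $N_{t}$ into its $Y_{0}$- and $SY_{1}$-components and read off both the strictness and the lifting property from the decomposition. Throughout I will use $\pi:Q\to SY_{1}$ for the projection so that $K_{t}=-H_{t}\circ \pi$, and I will use the fact, already recorded in the text, that the Reeb field $R_{A}$ equals the $\alpha_{0}$-Reeb field $R_{\alpha_{0}}$ on the $Y_{0}$ factor, hence is tangent to the fibers of $\pi$.

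First, I would recall the standard characterization: a contact vector field on $(Q,A)$ with contact Hamiltonian $K_{t}$ is the unique vector field $N_{t}$ satisfying $N_{t}\intprod A=K_{t}$ and $N_{t}\intprod \d A=\d K_{t}(R_{A})\,A-\d K_{t}$, and its Lie derivative is $\mathcal{L}_{N_{t}}A=\d K_{t}(R_{A})\,A$. Since $K_{t}$ is pulled back from $SY_{1}$ and $R_{A}$ lies along the $Y_{0}$ factor, we have $\d K_{t}(R_{A})=0$, so $\mathcal{L}_{N_{t}}A=0$; this gives strictness immediately.

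Next, to identify the flow, I would write $N_{t}=N_{t}^{Y}+N_{t}^{S}$ with $N_{t}^{Y}$ tangent to $Y_{0}$ and $N_{t}^{S}$ tangent to $SY_{1}$, and split the defining equations using $A=\alpha_{0}-\lambda_{1}$ and $\d A=\d\alpha_{0}-\d\lambda_{1}$. The $SY_{1}$-horizontal part of $N_{t}\intprod \d A=-\d K_{t}$ becomes $N_{t}^{S}\intprod \d\lambda_{1}=-\d H_{t}$, so $N_{t}^{S}$ is precisely the lift of $X_{t}$; the $Y_{0}$-horizontal part becomes $N_{t}^{Y}\intprod \d\alpha_{0}=0$, forcing $N_{t}^{Y}$ to be a (function-valued) multiple of $R_{\alpha_{0}}$; and the contact equation $N_{t}\intprod A=K_{t}$ then pins down this multiple as $\lambda_{1}(X_{t})-H_{t}$. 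Since $N_{t}^{Y}$ is tangent to the $Y_{0}$ fibers of $\pi$, the flow of $N_{t}$ projects to the flow of $N_{t}^{S}=X_{t}$, namely $\varphi_{t}$.

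There is no real obstacle here beyond bookkeeping: the whole lemma is a direct calculation once one observes that $R_{A}$ is tangent to the fibers of $\pi$ and that $K_{t}$ is constant along those fibers. The only mildly delicate point is to confirm that the ``$Y_{0}$-horizontal'' and ``$SY_{1}$-horizontal'' parts of the equation $N_{t}\intprod \d A=-\d K_{t}$ genuinely decouple, which uses that $\d\alpha_{0}$ and $\d\lambda_{1}$ live on different factors and annihilate vectors from the opposite factor.
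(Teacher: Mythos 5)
Your argument is correct and matches the paper's proof in essence: both rest on the same two observations, namely that $K_t$ is pulled back from $SY_1$ while $R_A$ is tangent to the $Y_0$ fibers (giving strictness via $\d K_t(R_A)=0$), and both then identify $N_t = (\lambda_1(X_t)-H_t)R + X_t$ and use fiber-tangency of the Reeb component to conclude the flow lifts $\varphi_t$. The only cosmetic difference is that the paper states the formula for $N_t$ and verifies it directly, whereas you derive it by decomposing $N_t$ into its $Y_0$- and $SY_1$-components and decoupling the equations.
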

\begin{proof}
  Recall that $N_{t}$ is contact if and only if:
  \begin{equation}\label{eq:contact-vf}
    \d(N_{t}\intprod A)+N_{t}\intprod \d A=\beta_{t} A,
  \end{equation}
  and $N_{t}$ induces a flow by \emph{strict} contactomorphisms if $\beta_{t}=0$ for all $t$. Since $N_{t}\intprod A$ is pulled back from $SY_{1}$, and the Reeb field for $A$ is tangent to the fibers of $Q\to SY_{1}$, we conclude that $\beta_{t}=0$.

  We claim that:
  \begin{equation}\label{eq:formula_lifted_vector}
    N_{t}=(X_{t}\intprod \lambda_{1}-H_{t})R+X_{t},
  \end{equation}
  where $R$ is the Reeb flow for $Y_{0}$ (and $Q$). Indeed, this satisfies $N_{t}\intprod A=-H_{t}$, and \eqref{eq:contact-vf} follows immediately from $X_{t}\intprod \d\lambda_{t}+\d H_{t}=0$.
  
  Since $R$ is tangent to the fibers of $Q\to SY_{1}$, the flow of $N_{t}$ is a lift of the flow of $X_{t}$. This completes the proof.
\end{proof}

\subsubsection{Lifting Lagrangian isotopies}
\label{sec:lifting_primitive}
For later use, it will be important to answer the following question: if $\Lambda$ is a Legendrian lift of $L$, as constructed in \S\ref{sec:lifting}, $\varphi_{t}$ is a Hamiltonian isotopy, and $\Phi_{t}$ is the lifted contact isoptopy as above, what is the relationship between $\Phi_{1}(\Lambda)$ and $\varphi_{1}(L)$? This is answered with:

\begin{lemma}\label{lemma:lifting_primitive}
  Let $\varphi_{t}$ be the Hamiltonian isotopy of $SY_{1}$ defined by $H_{t}$, and let $\Phi_{t}$ be the lifted contact isotopy with contact Hamiltonian $-H_{t}\circ \pr_{1}$. Suppose that $J$ is the Legendrian lift of  an exact Lagrangian $j:L\to SY_{1}$, using the primitive $f:L\to \R$ and auxiliary Legendrian $i:\Lambda_{0}\to Y_{0}$, as defined in \eqref{eq:formula_legendrian}. Then $\Phi_{t}\circ J$ is a lift of $\varphi_{t}\circ j$ of the form \eqref{eq:formula_legendrian}, using the same auxiliary Legendrian $i$ and the new primitive
  \begin{equation*}
    f_{t}(y)=f(y)+\int_{0}^{t}(X_{\tau}\intprod \lambda_{1}-H_{\tau})\circ \varphi_{\tau}(j(y))\,\d \tau,
  \end{equation*}
  where $X_{t}$ is the infinitesimal generator of $\varphi_{t}$.  
\end{lemma}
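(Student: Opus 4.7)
The plan is to directly integrate the lifted vector field $N_{t}=(X_{t}\intprod \lambda_{1}-H_{t})R+X_{t}$ from \eqref{eq:formula_lifted_vector} starting at a point of the form $J(x,y)=(\rho_{f(y)}(i(x)),j(y))$, and then verify that the resulting curve has the structure \eqref{eq:formula_legendrian} for the data $(\varphi_{t}\circ j,\,i,\,f_{t})$. Since $N_{t}$ splits as a Reeb multiple on the $Y_{0}$-factor plus the Hamiltonian lift of $X_{t}$ on the $SY_{1}$-factor, the two components of the flow decouple, and I would compute them separately.

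First I would observe that, because $R$ is tangent to the fibers of $Q\to SY_{1}$, the $SY_{1}$-projection of $\Phi_{t}(J(x,y))$ is simply $\varphi_{t}(j(y))$. For the $Y_{0}$-component, the Reeb coefficient $(X_{t}\intprod \lambda_{1}-H_{t})(\varphi_{t}(j(y)))$ depends only on $y$ and $t$, not on $x$, and $R$ is autonomous, so the group law $\rho_{a}\circ \rho_{b}=\rho_{a+b}$ lets me integrate the Reeb speed to obtain
\begin{equation*}
\Phi_{t}(J(x,y))=\bigl(\rho_{f(y)+G_{t}(y)}(i(x)),\,\varphi_{t}(j(y))\bigr),
\end{equation*}
where $G_{t}(y)=\int_{0}^{t}(X_{\tau}\intprod \lambda_{1}-H_{\tau})\circ \varphi_{\tau}(j(y))\,\d\tau$. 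This already matches the formula \eqref{eq:formula_legendrian} applied to the Lagrangian $\varphi_{t}\circ j$, the same auxiliary Legendrian $i$, and the candidate primitive $f_{t}=f+G_{t}$.

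The remaining task is to check that $f_{t}$ really is a primitive of $(\varphi_{t}\circ j)^{*}\lambda_{1}$, so that $\varphi_{t}\circ j$ qualifies as an exact Lagrangian and the lift formula \eqref{eq:formula_legendrian} is legitimate. For this I would appeal to Cartan's formula together with the Hamiltonian identity $X_{\tau}\intprod \d\lambda_{1}=-\d H_{\tau}$, which yields $L_{X_{\tau}}\lambda_{1}=\d(X_{\tau}\intprod \lambda_{1}-H_{\tau})$. Differentiating $\varphi_{t}^{*}\lambda_{1}$ in $t$ and integrating from $0$ to $t$ gives $\varphi_{t}^{*}\lambda_{1}=\lambda_{1}+\d G_{t}$; pulling back by $j$ and using $j^{*}\lambda_{1}=\d f$ then gives $(\varphi_{t}\circ j)^{*}\lambda_{1}=\d f_{t}$, as desired. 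No real obstacle appears — the lemma is essentially bookkeeping, and the conceptual point is that the choice $K_{t}=-H_{t}\circ \pr_{1}$ in \S\ref{sec:lifting_ham} was engineered precisely so that the Reeb-direction correction in $N_{t}$ agrees with the infinitesimal change of the Lagrangian primitive under $\varphi_{\tau}$.
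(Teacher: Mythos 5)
Your proof is correct and takes essentially the same approach as the paper: you integrate the lifted field $N_{t}$ forward from $J(x,y)$ using the decoupling of the $Y_{0}$- and $SY_{1}$-components and the Reeb group law, whereas the paper writes down the candidate $J_{t}(x,y)=(\rho_{f_{t}(y)}(i(x)),\varphi_{t}(j(y)))$ and differentiates to verify $\partial_{t}J_{t}=N_{t}\circ J_{t}$ — two directions of the identical computation. Both arguments then invoke the same Cartan-formula identity to confirm that $f_{t}$ is a valid primitive for $(\varphi_{t}\circ j)^{*}\lambda_{1}$.
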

\begin{proof}
  Use Cartan's magic formula to deduce that:
  \begin{equation*}
    \varphi_{t}^{*}\lambda_{1}=\lambda_{1}+\d\int_{0}^{t}(X_{\tau}\intprod \lambda_{1}-H_{\tau})\circ \varphi_{\tau}\, \d \tau,
  \end{equation*}
  and thus $f_{t}$ is a valid primitive for $\varphi_{t}\circ j$. Moreover, if we construct
  \begin{equation*}
    J_{t}(x,y)=(\rho_{f_{t}(y)}(i(x)),\varphi_{t}\circ j (y)),
  \end{equation*}
  then differentiating with respect to $t$ yields:
  \begin{equation*}
    \pd{}{t}J_{t} = ((X_{t}\intprod \lambda_{1}-H_{t}) R+X_{t})\circ J_{t}=N_{t}\circ J_{t},
  \end{equation*}
  where we have used \eqref{eq:formula_lifted_vector}. Since $N_{t}$ generates $\Phi_{t}$ we conclude $J_{t}=\Phi_{t}\circ J$, as desired.
\end{proof}

\subsubsection{Compact exact Lagrangians in exotic symplectizations}
\label{sec:exotic}
Let $Y_{1}$ be \emph{exotic} in the sense that $SY_{1}$ contains a compact exact Lagrangian $L$, and write $\lambda_{1}|_{L}=\d f$. According to \cite{murphy_closed_exact} there exist such compact $Y_{1}$ with $\dim(Y_{1})\ge 5$. See \S\ref{sec:non-LDR} for more details.

Let $e^{s}L$ be the translated Lagrangian in $SY_{1}$ (i.e., flow $L$ by time $s$ using the Liouville flow). Let $F_{s}(x)=e^{s}x$ for $x\in L$, and observe that $F_{s}^{*}\lambda_{1}=e^{s}F_{0}^{*}\lambda_{1}=e^{s}\d f$. Thus $F_{s}$ is an \emph{exact isotopy}, and hence by the isotopy extension theorem, $F_{s}$ can be extended to a Hamiltonian isotopy of $SY_{1}$.

\begin{lemma}[due to Sikorav, see \cite{chekanov_2000}]
  Suppose that $s$ is large enough that $e^{s}L$ and $L$ are disjoint. Then, for any $\epsilon>0$, there exist compactly supported Hamiltonians $H_{t}$ with
  \begin{equation*}
    \text{oscillation energy}=\int_{0}^{1}(\max H_{t}-\min H_{t})\d t<\epsilon,
  \end{equation*}
  and the time $1$ flow induced by $H_{t}$ takes $L$ to $e^{s}L$.
\end{lemma}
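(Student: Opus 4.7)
The plan is to implement Sikorav's argument, detailed in \cite[\S 4]{chekanov_2000}. The idea is to exploit the conformal scaling of the Liouville flow $F_\sigma$ on $SY_1$, which satisfies $F_\sigma^*\omega_1 = e^\sigma \omega_1$, in order to rescale the oscillation energy of a displacement by an arbitrarily small factor $e^{-\sigma}$.

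First I would record the rescaling identity: for any compactly supported Hamiltonian $G_t$ on $SY_1$, a short computation with the Hamiltonian equation shows that the conjugated flow $F_\sigma^{-1} \circ \phi^G_t \circ F_\sigma$ is the Hamiltonian flow of $K_t := e^{-\sigma} G_t \circ F_\sigma$, with $\mathrm{osc}(K_t) = e^{-\sigma}\mathrm{osc}(G_t)$. As a consequence, if $\phi^G_1$ sends $e^\sigma L$ to $e^{\sigma+s} L$, then $\phi^K_1$ sends $L$ to $e^s L$.

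The plan is then to construct, for each $\sigma > 0$, a compactly supported Hamiltonian $G^{(\sigma)}_t$ with $\phi^{G^{(\sigma)}}_1(e^\sigma L) = e^{\sigma+s} L$, and set $H_t := e^{-\sigma}G^{(\sigma)}_t \circ F_\sigma$. This gives the desired displacement $\phi^H_1(L) = e^s L$ with oscillation energy $e^{-\sigma}\mathrm{osc}(G^{(\sigma)})$; provided $\mathrm{osc}(G^{(\sigma)}) = o(e^\sigma)$ as $\sigma \to \infty$, the oscillation of $H$ tends to $0$, proving the lemma.

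The main obstacle is producing $G^{(\sigma)}$ with sub-exponential oscillation in $\sigma$. The naive realization of the Liouville isotopy $u \mapsto e^{\sigma+us} L$ (an exact Lagrangian isotopy with primitive $e^{\sigma+us} f$) via a Weinstein tubular neighborhood extension, as in Lemma~\ref{lemma:lifting_primitive}, yields $\mathrm{osc}(G^{(\sigma)}) \leq (e^s-1)e^\sigma\mathrm{osc}(f)$, which scales exactly like $e^\sigma$; after conjugation this leaves only the $\sigma$-independent bound $(e^s-1)\mathrm{osc}(f)$, not the desired arbitrary smallness. Sikorav's improvement is to construct $G^{(\sigma)}$ as a concatenation of many small Hamiltonian pieces, each supported in a Weinstein neighborhood of an intermediate Lagrangian $e^{\sigma+us} L$, with the cutoffs arranged so that the integrated oscillation is bounded by a constant independent of $\sigma$. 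This sub-exponential construction is available precisely because $SY_1$ is exotic: in a tame symplectic manifold Gromov's theorem would force compact exact Lagrangians to be non-displaceable, giving a Floer-theoretic lower bound $\mathrm{osc}(G^{(\sigma)}) \gtrsim e^\sigma$ that rules out the argument, but no such obstruction exists here. Carrying out the construction of $G^{(\sigma)}$ precisely is the crux, and the detailed bookkeeping is given in \cite[\S 4]{chekanov_2000}.
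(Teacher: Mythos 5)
You correctly identify the conformal-rescaling half of the argument: conjugation by the Liouville map $F_\sigma$ rescales the Hofer oscillation by $e^{-\sigma}$, so it suffices to produce compactly supported Hamiltonians $G^{(\sigma)}_t$ taking $e^{\sigma}L$ to $e^{\sigma+s}L$ with $\mathrm{osc}(G^{(\sigma)})=o(e^{\sigma})$. This part matches the paper's ``key observation.''

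However, the way you propose to supply $G^{(\sigma)}$ is not what Sikorav/Chekanov do, and as stated it cannot work. Any compactly supported Hamiltonian isotopy $\phi^{G}_u$ whose intermediate Lagrangians follow the Liouville path $u\mapsto e^{\sigma+us}L$ must, on that path, restrict (up to a $u$-dependent constant) to the function $-s\,e^{\sigma+us}f$, where $f$ is the primitive of $\lambda|_L$. Adding constants does not change the oscillation over the Lagrangian, so $\mathrm{osc}(G_u)\ge s\,e^{\sigma+us}\,\mathrm{osc}(f)$, and integrating over $u\in[0,1]$ gives a lower bound $e^{\sigma}(e^{s}-1)\,\mathrm{osc}(f)$. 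Since $L$ is compact, $f$ cannot be constant, so this is exactly the exponential growth you are trying to avoid — cutting the path into many small Weinstein-neighborhood pieces does not circumvent it because the lower bound is additive over the pieces. No arrangement of cutoffs produces a sub-exponential realization of the Liouville path.

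What the paper does instead is avoid realizing the Liouville path altogether. Because $L\cup e^{s}L$ is itself an exact Lagrangian, the family $r\mapsto e^{r}(L\cup e^{s}L)$ is an exact Lagrangian isotopy and hence extends to a compactly supported Hamiltonian diffeomorphism $\chi$ that simultaneously maps $L\mapsto e^{\sigma}L$ and $e^{s}L\mapsto e^{\sigma+s}L$. Taking any \emph{fixed} compactly supported Hamiltonian isotopy $\varphi_t$ with $\varphi_1(L)=e^{s}L$ and finite oscillation $E$, the conjugate $\chi\circ\varphi_t\circ\chi^{-1}$ takes $e^{\sigma}L$ to $e^{\sigma+s}L$, and by conjugation invariance of the Hofer norm its oscillation is still $E$ — independent of $\sigma$. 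That is the $G^{(\sigma)}$ your plan needs, and it has bounded (not just sub-exponential) oscillation. The intermediate Lagrangians of this isotopy are $\chi(\varphi_t(L))$, which have nothing to do with the Liouville flow; this is precisely the idea your Weinstein-neighborhood concatenation misses. Your closing remark about Gromov's theorem correctly explains why no Floer-theoretic obstruction blocks the argument, but it does not supply the construction. You need to add the exactness-of-$L\cup e^{s}L$ plus Hofer-conjugation-invariance step to close the gap.
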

In particular, the Chekanov-Hofer distance on the space of Lagrangians is degenerate in this exotic symplectization.
\begin{proof}
  The argument is rather non-constructive. Consider $K=L\cup e^{s}L$ as an exact Lagrangian, and observe that $r\mapsto e^{r}K$ is an exact isotopy. Thus we can find a Hamiltonian diffeomorphism simultaneously taking $L$ to $e^{r}L$ and $e^{s}L$ to $e^{s+r}L$. Thus, by conjugation invariance of the Hofer norm, if we can go from $L$ to $e^{s}L$ with oscillation energy $E$, then we can go from $e^{r}L$ to $e^{s+r}L$ with oscillation energy $E$.

  The key observation is that if we can go from $e^{r}L$ to $e^{s+r}L$ with oscillation energy $E$, \emph{then we can go from $L$ to $e^{s}L$ with oscillation energy $e^{-r}E$}. Indeed, given a Hamiltonian isotopy $\varphi_{t}$ taking $e^{r}L$ to $e^{r+s}L$ with oscillation energy $E$, we simply conjugate with the expanding map $\psi:x\mapsto e^{r}x$, i.e., $\psi^{-1}\circ \varphi_{t}\circ \psi$ takes $L$ to $e^{s}L$ with oscillation energy $e^{-r}E$. Combining this with the previous paragraph's observation and taking $r\to\infty$ implies the desired result. 
\end{proof}

\subsubsection{Disjoining Legendrians with arbitrarily small energy}
\label{sec:disjoining}

Fix $\epsilon>0$, and let $Y_{1}$ be exotic so that there is a compact exact Lagrangian $L$ in $SY_{1}$. Using \S\ref{sec:lifting}, construct a Legendrian lift $\Lambda\subset Q$ of $L$. By \S\ref{sec:exotic}, there is a compactly supported Hamiltonian isotopy of $SY_{1}$ of oscillation energy at most $\epsilon$ which displaces $L$. Using the results of \S\ref{sec:lifting_ham}, this lifts to a contact isotopy of $Q$. Moreover, since the contact Hamiltonian is\footnote{If $Y_{0}$ is non-compact then we need to cut-off but this will not change the oscillation energy.} the lift of the Hamiltonian from $SY_{1}$, the oscillation energy of the lift is at most $\epsilon$. The lifted isotopy disjoins $\Lambda$ from itself. This completes the proof of the first part of Theorem \ref{theorem:nonLDR}.

\subsubsection{Degeneracy of the pseudo-metric}
\label{sec:degeneracy}
To prove the pseudo-metric is degenerate, we will find two distinct Legendrians $\Lambda_{1},\Lambda_{2}$, and isotopies of arbitrarily short $A$-length joining $\Lambda_{1},\Lambda_{2}$. We continue to work in $(Q,A)$ where $Q=Y_{0}\times SY_{1}$ for exotic $Y_{1}$, and suppose $j:L\to SY_{1}$ is an exact Lagrangian with primitive $f$. Again pick $s>0$ so $L,e^{s}L$ are disjoint.

Naively, $\Lambda_{1}$ will be a Legendrian lift of $L$, $\Lambda_{2}$ will be a lift of $e^{s}L$, and the isotopies will be lifts of Hamiltonian isotopies taking $L$ to $e^{r}L$. There is a slight technical point: if $\varphi_{t}^{k}$ is a sequence of Hamiltonian isotopies taking $L$ to $e^{s}L$ with oscillation energy tending to zero, and $\Phi_{t}^{k}$ are the lifts to contact isotopies, then $\Phi_{1}^{k}(\Lambda_{1})$ may be \emph{distinct} lifts of $e^{s}L$, and thus there is no \emph{fixed} $\Lambda_{2}$ we can use to establish the degeneracy of the pseudo-metric. To resolve this, we analyze how the primitives of $\varphi_{t}^{k}\circ j$ change along the isotopies generated by Sikorav's argument.

First we prove the following, slightly ad hoc, result, and in the next section we'll show that its hypotheses can be achieved with Sikorav's argument.
\begin{lemma}\label{lemma:ad_hoc}
  Let $j:L\to SY_{1}$ be an exact Lagrangian, as above. Suppose that $\varphi_{t}^{k}$, $k=1,2,\dots$ is a sequence of Hamiltonian isotopies so that $\varphi_{1}^{k}(j(y))=e^{s}j(y)$ (i.e., $\varphi_{1}^{k}$ is fixed on the image of $L$, and agrees with the rescaling by $e^{s}$ diffeomorphism). Let $H_{t}^{k}$ and $X_{t}^{k}$ be the (compactly supported) Hamiltonian and vector field for $\varphi_{t}^{k}$. Suppose that:
  \begin{equation*}
    \sup_{k}\sup_{y\in L}\abs{\int_{0}^{1}(X_{t}^{k}\intprod \lambda_{1}-H_{t}^{k})\circ \varphi^{k}_{t}(y)\,\d t}<\infty
  \end{equation*}
  and the oscillation energy $\int_{0}^{1}\max H_{t}^{k}-\min H_{t}^{k}\,\d t$ converges to zero as $k\to\infty$. Then there exist fixed Legendrian lifts $\Lambda_{1},\Lambda_{2}$ in $Q$ of $L$, $e^{s}L$, which can be joined by isotopies of arbitrarily small $A$-length.
\end{lemma}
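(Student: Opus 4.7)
The plan is to identify distinguished lifts $\Lambda_1, \Lambda_2$ of $L, e^s L$ using a subsequential limit of the primitive constants from Lemma \ref{lemma:lifting_primitive}, and then connect them by isotopies built in two pieces: a modified lift of $\varphi_t^k$ whose $A$-length is controlled by the oscillation of $H_t^k$, plus a terminal Reeb correction whose length must be shown to vanish.

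First I would unpack hypothesis 1 using Lemma \ref{lemma:lifting_primitive}. Fix an auxiliary Legendrian $i:\Lambda_0\to Y_0$ and let $\Phi_t^k$ be the lifted contact isotopy generated by $-H_t^k\circ \pr_1$. It carries the lift $J$ of $j:L\to SY_1$ (with primitive $f$) to the Legendrian of the form \eqref{eq:formula_legendrian} with primitive
\begin{equation*}
  f_t^k(y)=f(y)+\int_0^t (X_\tau^k\intprod \lambda_1-H_\tau^k)\circ \varphi_\tau^k(j(y))\,\d\tau.
\end{equation*}
Since $\varphi_1^k\circ j=e^s\,j$, we have $(\varphi_1^k\circ j)^*\lambda_1=e^s\,\d f$, and hence $f_1^k=e^s f+c_k$ for a constant $c_k$ on each connected component of $L$. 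Hypothesis 1 then reads as $\sup_k |c_k|<\infty$, so we may pass to a subsequence with $c_k\to c_\infty$. Define $\Lambda_1$ and $\Lambda_2$ via \eqref{eq:formula_legendrian} with the common auxiliary $i$ and primitives $f$ and $e^s f+c_\infty$ respectively.

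Second, I would build the connecting isotopy from a modified lift. Adding a purely time-dependent function $a_t^k$ to $H_t^k$ leaves $\varphi_t^k$ unchanged but, using that the Reeb field of $A$ commutes with the lifted vector field \eqref{eq:formula_lifted_vector}, composes the lifted contact flow with the Reeb translation $\rho_{-\int_0^t a_\tau^k\,\d\tau}$ on the $Y_0$ factor. Choosing $a_t^k=-\tfrac12(\max_v H_t^k+\min_v H_t^k)$ gives $|H_t^k+a_t^k|\le\tfrac12(\max H_t^k-\min H_t^k)$ pointwise, so the contact Hamiltonian of the modified lifted isotopy on the moving Legendrian has $A$-length at most half the oscillation energy of $H_t^k$, tending to zero by hypothesis 2. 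Appending a terminal Reeb segment of duration $\delta_k:=c_\infty-c_k-\int_0^1 a_t^k\,\d t$ then lands exactly on $\Lambda_2$ and contributes $|\delta_k|$ to the total $A$-length.

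The crux is to verify $\delta_k\to 0$ along a further subsequence, and this is the step I expect to be the main obstacle. Using hypothesis 1 one has the pointwise identity
\begin{equation*}
  \int_0^1 H_t^k\circ \varphi_t^k(j(y))\,\d t=\int_0^1 X_t^k\intprod \lambda_1\circ \varphi_t^k(j(y))\,\d t - (e^s-1)f(y)-c_k,
\end{equation*}
and averaging over $y\in L$ against a probability measure (using that the oscillation of $H_t^k$ on the moving Lagrangian compares $(\max+\min)/2$ to the average) reduces uniform control of $\int_0^1\tfrac12(\max H_t^k+\min H_t^k)\,\d t$ to uniform control of the symplectic areas swept between $L$ and $e^sL$ by the family of paths $t\mapsto\varphi_t^k(j(y))$, measured against the Liouville reference path $t\mapsto e^{st}j(y)$ on which $\lambda_1$ integrates to zero. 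Once these swept areas are bounded in $k$, a diagonal extraction yields a sub-subsequence along which $\int a_t^k\,\d t$ converges; its limit can be absorbed by redefining $c_\infty$, and the total $A$-length $\tfrac12\int_0^1(\max H_t^k-\min H_t^k)\,\d t+|\delta_k|$ then tends to zero, as required.
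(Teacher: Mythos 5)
Your overall strategy coincides with the paper's: apply Lemma~\ref{lemma:lifting_primitive} to track the primitive, observe that the hypotheses force the constants $c_{k}$ to admit a convergent subsequence, fix $\Lambda_{2}$ via the limiting constant, and build the connecting isotopy as the lifted flow followed by a terminal Reeb segment. The problem is in what you call the crux. You introduce a centering $a_{t}^{k}=-\tfrac12(\max H_{t}^{k}+\min H_{t}^{k})$, which leaves a residual Reeb correction $\int_{0}^{1}a_{t}^{k}\,\d t$, and you then treat ``$\delta_{k}\to 0$'' as the main obstacle, proposing an argument via averaging over $y\in L$ and a bound on ``swept symplectic areas.'' This step is both unnecessary and not actually completed: the $\max/\min$ defining $a_{t}^{k}$ are taken over all of $SY_{1}$, whereas the pointwise identity you wrote down controls $H_{t}^{k}$ only along the moving Lagrangian, so the claimed ``reduction to swept areas'' does not follow; and the boundedness of those swept areas is simply asserted, not established.

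The missing observation, which the paper uses and which dissolves the difficulty entirely, is that $H_{t}^{k}$ is \emph{compactly supported} on the non-compact manifold $SY_{1}$, hence takes the value $0$, hence $\max H_{t}^{k}\ge 0\ge \min H_{t}^{k}$. This gives directly $\sup|H_{t}^{k}|\le \max H_{t}^{k}-\min H_{t}^{k}$, so the $A$-length of the (unmodified) lifted isotopy $\Phi_{t}^{k}\circ J$ is bounded by the oscillation energy of $H_{t}^{k}$ and tends to zero with no centering at all; the terminal Reeb correction is then just $|c_{\infty}-c_{k}|\to 0$ along the subsequence, and one is done. (The same inequality also shows $\bigl|\int_{0}^{1}a_{t}^{k}\,\d t\bigr|\le \tfrac12\int_{0}^{1}(\max H_{t}^{k}-\min H_{t}^{k})\,\d t\to 0$, so even your centered version closes instantly once this is noticed.) As written, the proposal does not prove the statement because the argument offered for $\delta_{k}\to 0$ is incomplete and does not hold together.
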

\begin{proof}
  Suppose $\Lambda_{1}$ is described by $J$ from \eqref{eq:formula_legendrian}. Recall the result of Lemma \ref{lemma:lifting_primitive}: if $\Phi^{k}_{t}$ is the lift of $\varphi^{k}_{t}$, then $\Phi^{k}_{1}\circ J$ is also given by \eqref{eq:formula_legendrian}, using $\varphi^{k}_{1}\circ j$ and the primitive:
  \begin{equation*}
    f_{k}(y)=f(y)+\int_{0}^{1}(X_{t}^{k}\intprod \lambda_{1}-H_{t}^{k})\circ \varphi_{t}^{k}(y)\,\d t.
  \end{equation*}
  Since $\varphi^{k}_{1}\circ j=\varphi^{1}_{1}\circ j$, by assumption, $f_{k}(y)-f_{1}(y)=C_{k}$
  is a constant. Referring to \eqref{eq:formula_legendrian}, conclude that $\Phi_{1}^{k}\circ J$ and $\Phi_{1}^{1}\circ J$ \emph{differ by a Reeb flow of time $C_{k}$}.

  By assumption, $C_{k}$ is bounded in $k$. In particular, there exists a subsequence which converges, say to $C$. Let $\Lambda_{2}$ be the time $C$ Reeb flow of $\Phi^{1}_{1}\circ J$. The preceding observation implies that $\Phi^{k}_{1}\circ J$ differs from a time-$\epsilon_{k}$ Reeb flow from $\Lambda_{2}$ where $\epsilon_{k}\to 0$; therefore $\Phi^{k}_{1}\circ J$ can be be joined to $\Lambda_{2}$ by an isotopy of $A$-length $\epsilon_{k}$ (take a cut-off of the time-$\epsilon_{k}$ Reeb flow).

  The result now follows: since the isotopy $\Phi^{k}_{t}\circ J$ has arbitrarily small oscillation energy as $k\to\infty$, it also has $A$-length converging to zero (note that the oscillation energy bounds the $A$-length since the contact Hamiltonian takes the value $0$). Concatenate with the cut-off of the time-$\epsilon_{k}$ Reeb flow to join $\Phi^{k}_{1}\circ J$ to the fixed Legendrian $\Lambda_{2}$. The result joins $\Lambda_{1}$ to $\Lambda_{2}$ with arbitrarily small $A$-length as $k\to\infty$.
\end{proof}

\subsubsection{Achieving the bound on the primitive}
\label{sec:achieving_boundedness}

In this section, we analyze Sikorav's argument more carefully and show that the hypotheses of Lemma \ref{lemma:ad_hoc} can be achieved.

Let $j':L\sqcup L\to SY_{1}$ be the coproduct of $j$ and $e^{s}j$ (so the image is $L\cup e^{s}L$), as in \S\ref{sec:exotic}. For $r\in [0,1]$ consider the Lagrangian isotopy $j'_{r}(y)=e^{r}j'(y)$; this isotopy slides $L\cup e^{s}L$ up to $e^{1}L\cup e^{s+1}L$ in time $1$.

The Lagrangian isotopy extension theorem implies there is an ambient Hamiltonian isotopy $g_{r}$ so that $g_{r}(y)=j_{r}'(y)$ for $y\in L\cup e^{s}L$.

Let $\psi_{r}:SY_{1}\to SY_{1}$ be the global expanding map $\psi_{r}(z)=e^{r}z$, so $\kappa_{r}=g_{r}^{-1}\circ \psi_{r}$ satifies:
\begin{enumerate}
\item $\kappa_{1}^{*}\omega=e^{1}\omega$,
\item $\kappa_{r}(y)=y$ for $y\in L\cup e^{s}L$.
\end{enumerate}
Reparametrize $r=r(t)$ via a map $[0,1]\to [0,1]$, where $r'(t)=0$ for $t$ near $0$ and $1$, rewrite $\kappa_{t}=\kappa_{r(t)}$, and smoothly extend $\kappa$ to all $t>0$ by $\kappa_{t+1}=\kappa_{t}\kappa_{1}$.

Now let $\varphi_{t}$ be any Hamiltonian isotopy taking $L$ to $e^{s}L$ in time $1$, and define
\begin{equation*}
  \varphi^{k}_{t}=\kappa_{k}^{-1}\circ \varphi_{t}\circ\kappa_{k}.
\end{equation*}
We claim that $\varphi^{k}_{t}$ satisfies the hypotheses of Lemma \ref{lemma:ad_hoc}.

If $X_{t}$ is the generating vector field of $\varphi_{t}$, then the generator of $\varphi^{k}_{t}$ is
\begin{equation*}
  X_{t}^{k}=\d \kappa_{k}^{-1}\circ X_{t}\circ \kappa_{k},
\end{equation*}
and inserting this into $\omega$ we obtain:
\begin{equation*}
  X_{t}^{k}\intprod \omega=e^{-k}X_{t}^{k}\intprod \kappa_{k}^{*}\omega=e^{-k}\kappa_{k}^{*}(X_{t}\intprod \omega)=\d(e^{-k}H_{t}\circ \kappa_{k}),
\end{equation*}
where $H_{t}$ is the Hamiltonian for $\varphi_{t}$. In particular, $\varphi_{t}^{k}$ is Hamiltonian, and, as $k\to\infty$, the oscillation energy of $\varphi^{k}_{t}$ tends to zero.

It remains only to prove the bound on $\int_{0}^{1}(X_{t}^{k}\intprod \lambda_{1}-H^{k}_{t})\circ \varphi_{t}^{k}(y)\,\d t$. To do so, introduce the notation $\gamma^{k}_{y}(t)=\varphi_{t}^{k}(y)$, and consider $\gamma^{k}_{y}$ as a path which starts on $L$ and ends on $e^{s}L$. Note that as $k$ varies, $\gamma^{k}_{y}$ varies relative its endpoints, i.e., $\gamma^{k}_{y}(0)=y$ and $\gamma^{k}_{y}(1)=e^{s}y$. Recognizing $\ud{}{t}\gamma^{k}_{y}(t)=X_{t}^{k}\circ \varphi_{t}^{k}(y),$ conclude:
\begin{equation*}
  \int_{0}^{1}(X_{t}^{k}\intprod \lambda_{1}-H^{k}_{t})\circ \varphi_{t}^{k}(y)\,\d t=\int_{0}^{1} (\gamma_{y}^{k})^{*}\lambda_{1}-H_{t}^{k}\circ \gamma_{y}^{k}(t)\,\d t=:a_{k}(y).
\end{equation*}
This quantity is the Hamiltonian perturbed symplectic action for paths. We claim that:
\begin{equation*}
  \pd{a_{k}}{k}(y)=\int_{0}^{1}\pd{H^{k}_{t}}{k}\circ \gamma^{k}_{y}(t).
\end{equation*}
Indeed, the other terms in the derivative of $a_{k}$ vanish because $\gamma_{y}^{k}$ is a critical point for the action with respect to variations in the path relative its endpoints. Referring to the formulas $H^{k}_{t}=e^{-k}H_{t}\circ \kappa_{k}$ and $\varphi^{k}_{t}=\kappa_{k}^{-1}\circ \varphi_{t}\circ \kappa_{k}$ to obtain:
\begin{equation*}
  \pd{a_{k}}{k}(y)=\int_{0}^{1}(-e^{-k}H_{t}+e^{-k}\d H_{t}\cdot \pd{\kappa_{k}}{k}\circ \kappa_{k}^{-1})\circ \varphi_{t}(y)\,\d t.
\end{equation*}
It is clear that:
\begin{equation}\label{eq:implication}
  \int_{0}^{\infty}\sup_{y\in L}\abs{\pd{a_{k}}{k}(y)}\d k<\infty\implies \sup_{k>0}\sup_{y\in L}\abs{a_{k}(y)}<\infty,
\end{equation}
and the right hand side is what we want to show. Because of the $e^{-k}$ factor, the left hand side of \eqref{eq:implication} will follow from:
\begin{equation}\label{eq:final_1}
  \sup_{k>0}\sup_{y\in L} \sup_{t\in [0,1]}\abs{\pd{\kappa_{k}}{k}\circ \kappa_{k}^{-1}}\circ \varphi_{t}(y)<\infty,
\end{equation}
as the other terms are bounded independently of $y,k$. However, by the time periodicity $\kappa_{k+1}=\kappa_{k}\kappa_{1}$, it suffices to bound the quantity in \eqref{eq:final_1} for $k\in [0,1]$. Since the union of $\varphi_{t}(L)$, $t\in [0,1]$ is contained in some compact subset of $SY_{1}$, we conclude \eqref{eq:final_1} holds for $k\in [0,1]$. Thus we can apply Lemma \ref{lemma:ad_hoc} to complete the proof of Theorem \ref{theorem:nonLDR}.

\subsection{Proof of Theorem \ref{theorem:generating_function}}
\label{sec:proof_gf}
Suppose that $\Lambda_{0}\subset J^{1}(B)$ is a Legendrian which admits a linear-at-infinity generating function $F:B\times \R^{N}\to \R$. Since linear functions have no critical points, $\Lambda_{0}$ is compact. Begin with a reduction: it suffices to prove Theorem \ref{theorem:generating_function} in the case when $B$ is compact. For any isotopy $\Lambda_{t}$, there is a compact subdomain with boundary $B_{0}\subset B$ so that $\Lambda_{t}\subset J^{1}(B_{0}^{\mathrm{int}})$ and $F(x,\eta)=\ell(\eta)$ is linear whenever $x\not\in B_{0}$. Doubling $B_{0}$ to a closed manifold completes the reduction. 

\subsubsection{Review of linear-at-infinity generating functions}
\label{sec:review_GF}
This section reviews the necessary results from the theory of generating functions.

\begin{lemma}\label{lemma:GF1}
  Let $B$ be a compact manifold, and suppose that $\Lambda_{0}\subset J^{1}(B)$ admits a linear-at-infinity generating function. For any Legendrian isotopy $\Lambda_{t}$, there is a smooth family $F_{t}$ of linear-at-infinity generating functions $F_{t}:B\times \R^{N}\to \R$ so that $F_{t}$ generates $\Lambda_{t}$. Moreover, we can arrange so that $F_{t}$ agrees with $F_{0}$ on the complement of a compact set.
\end{lemma}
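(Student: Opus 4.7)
The plan is to lift the Legendrian isotopy to an ambient compactly supported contact isotopy, construct a generating function for that ambient isotopy, and then ``compose'' it with $F_0$. By the Legendrian isotopy extension theorem recalled in the introduction, choose a compactly supported contact isotopy $\varphi_t$ of $J^1(B)$ with $\varphi_0 = \id$ and $\varphi_t(\Lambda_0) = \Lambda_t$. This reduces the problem to finding generating functions that transform well under ambient compactly supported contact isotopies.

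Next, invoke the classical Chaperon--Sikorav theorem in the compact-base setting: every compactly supported contact isotopy $\varphi_t$ of $J^1(B)$ starting at the identity admits a smooth family $S_t : B \times B \times \R^M \to \R$ of linear-at-infinity generating functions for the Legendrian graph of $\varphi_t$ (inside a suitable contactization of $T^*(B \times B)$), with $S_0$ equal to a standard reference function and $S_t - S_0$ compactly supported. The standard proof partitions $[0,1]$ finely enough that each short-time piece $\varphi_{t_{i+1}} \circ \varphi_{t_i}^{-1}$ is $C^1$-close to the identity, so its graph is the $1$-jet of a genuine function on $B \times B$; iterated symplectic composition then yields $S_t$, and the linear-at-infinity shape propagates because the linear tails simply add under composition.

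Finally, define
\begin{equation*}
  G_t(y;\, x, \eta, \zeta) := F_0(x, \eta) + S_t(x, y, \zeta),
\end{equation*}
viewed as a function of $y \in B$ with fiber variables $(x, \eta, \zeta) \in B \times \R^{N+M}$. The fiberwise critical equations $\partial_\eta G_t = 0$, $\partial_\zeta G_t = 0$, and $\partial_x G_t = 0$ respectively cut out $\Lambda_0$, cut out the graph of $\varphi_t$, and match the cotangent fibers over $x$, so $G_t$ generates $\varphi_t(\Lambda_0) = \Lambda_t$. To put this into the required form $F_t : B \times \R^{N'} \to \R$ with Euclidean fiber, apply the standard stabilization trick: Whitney-embed $B \hookrightarrow \R^k$ and extend $G_t$ by adding a large nondegenerate quadratic penalty in the normal directions, which preserves the generated Legendrian and the linear-at-infinity shape. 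Since $S_t - S_0$ is compactly supported, so is $F_t - F_0$, which gives the last clause of the lemma.

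The only genuinely hard step is the second one: constructing the smooth family $S_t$ with uniformly linear-at-infinity fiber behavior while maintaining compactness of support in $t$. The compactness of $B$ (after the reduction in the paragraph preceding this lemma) is essential so that the partition-and-compose construction terminates with uniform bounds and the ``linear tails'' can be kept identically fixed away from a compact set.
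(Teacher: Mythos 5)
Your overall architecture matches the paper's: extend the Legendrian isotopy to an ambient contact isotopy, produce a generating family by composing $F_0$ with a family of generating functions for the ambient isotopy (this is exactly ``Chekanov's formula,'' cited in the paper via Jordan--Traynor and Traynor's helix-links paper), then fix up the fiber space and the shape at infinity. However, there is a genuine gap in the last step, and a related weak spot in the second.

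The final sentence of your third paragraph asserts that Whitney-embedding $B$ and adding a large nondegenerate quadratic penalty in the normal directions ``preserves \ldots the linear-at-infinity shape.'' This is false: a nondegenerate quadratic grows quadratically in the new fiber directions, so the resulting function is at best \emph{quadratic-linear-at-infinity}, not linear-at-infinity in the sense of the paper's definition (compactly supported plus nonzero linear). The same issue already lurks in your step two: the iterated composition in Chaperon--Sikorav inserts quadratic pairing terms in the auxiliary fiber variables, so the natural output is a quadratic-at-infinity (or quadratic-linear-at-infinity) family, and your remark that ``linear tails simply add under composition'' does not account for these quadratic pieces. What is missing in both places is the passage from quadratic-linear-at-infinity to linear-at-infinity, which requires a fiber-preserving diffeomorphism of $B\times \R^{N}$ straightening out the quadratic tail into a linear one. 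This is precisely what the paper invokes via \cite[Lemma~3.8]{sabloff_traynor_obstructions}, and it is the step that makes the lemma true as stated. Without it your construction produces a generating family, but not a linear-at-infinity one, which is essential for the barcode argument that follows (compactness of sublevel sets relative to $\{K<-R\}$ fails for quadratic tails of mixed index). Aside from this, your appeal to Legendrian isotopy extension, the composition formula, and stabilization are all in line with the paper's route, which proceeds more directly by stabilizing $F_0$ with a quadratic form $Q(\eta_2)$, extending via Chekanov's formula to $\bar F_t$ agreeing with $\bar F_0$ outside a compact set, and only then applying the fiber-preserving diffeomorphism to recover linearity at infinity.
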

\begin{proof}
  This is a combination of \cite[Theorem 3.1]{jordan_traynor} and \cite[Lemma 3.8]{sabloff_traynor_obstructions}. Begin with a linear-at-infinity function $F(x,\eta_{1})$ for $\Lambda_{0}$, and then define the stabilization:
  \begin{equation*}
    \bar{F}_{0}(x,\eta_{1},\eta_{2})=F(x,\eta_{1})+Q(\eta_{2})
  \end{equation*}
  where $Q$ is a quadratic form (depending on the isotopy $\Lambda_{t}$). For suitably chosen $Q$, there exists a smooth extension $\bar{F}_{t}(x,\eta_{1},\eta_{2})$ so that (i) $\bar{F}_{t}$ generates $\Lambda_{t}$, and (ii) $\bar{F}_{t}$ agrees with $\bar{F}_{0}$ on the complement of a compact set. The extension is constructed via Chekanov's formula, see \cite{chekanovQF,traynor_helix_links}. Let $\R^{N}$ be the space of possible pairs $(\eta_{1},\eta_{2})$. Applying \cite[Lemma 3.8]{sabloff_traynor_obstructions}, one concludes the existence of a fiber-preserving diffeomorphism $\varphi:B\times \R^{N}\to B\times \R^{N}$ so that $\bar{F}_{0}\circ \varphi$ is linear-at-infinity (rather than ``quadratic-linear-at-infinity''). Letting $F_{t}:=\bar{F}_{t}\circ \varphi$ completes the proof, as each $F_{t}$ will also be linear-at-infnity (and fiber-preserving diffeomorphisms do not change the generated Legendrian).
\end{proof}

The time-derivative of a family of generating functions is bounded by the contact Hamiltonian; see \cite[Lemma 3.1]{rizell_sullivan_persistence_1} for a related result:
\begin{lemma}\label{lemma:contact_ham_t_derivative}
  Suppose $F_{t}(x,\eta)$ is a family of generating functions inducing a Legendrian isotopy $\Lambda_{t}$. If $h_{t}$ is the contact Hamiltonian for the isotopy, $\Sigma_{t}$ is the fiberwise critical set of $F_{t}$, and $F_{t}'(x,\eta)$ is the time-derivative, then $$F_{t}'(x,\eta)\in [\min h_{t},\max h_{t}]$$ holds for all $(x,\eta)\in \Sigma_{t}$.
\end{lemma}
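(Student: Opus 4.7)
The plan is to observe that the contact Hamiltonian $h_t$ is independent of the choice of parametrization of the isotopy $\Lambda_t$, so we are free to compute it using the parametrization coming from the generating function. More precisely, given any $(x_0,\eta_0)\in \Sigma_{t_0}$, I will produce a parametrization of a neighborhood of $\iota_{t_0}(x_0,\eta_0)\in \Lambda_{t_0}$, where $\iota_t(x,\eta)=(x,\bd_x F_t(x,\eta),F_t(x,\eta))$, and use it to show that $h_t$ evaluated at the corresponding point of $\Lambda$ equals $F_t'(x_0,\eta_0)$. This forces $F_t'(x_0,\eta_0)\in [\min h_t,\max h_t]$.

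First I would use the implicit function theorem: since $0$ is a regular value of $\bd_\eta F_t$, the set $\{(s,x,\eta): \bd_\eta F_s(x,\eta)=0\}$ is a smooth submanifold, so we can find a smooth path $s\mapsto (x(s),\eta(s))$ defined near $s=t_0$ with $(x(t_0),\eta(t_0))=(x_0,\eta_0)$ and $(x(s),\eta(s))\in \Sigma_s$ for all $s$. The curve $s\mapsto \iota_s(x(s),\eta(s))$ then lies in the image of $\Lambda_s$ for each $s$; composing with the inverse of the Legendrian embedding $i_{t_0}$ gives a point $y\in \Lambda$, and by following a smooth family of such constructions we may realize $s\mapsto \iota_s(x(s),\eta(s))$ as $s\mapsto i_s(y)$ for the appropriate parametrization (or equivalently, exploit the fact that $h_t(y)$ depends only on the tangent vector $i'_t(y)$ to $\Lambda_t$).

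Next I compute the time derivative of $\iota_s(x(s),\eta(s))$ at $s=t_0$. In the standard contact form $\alpha=\d z-p\,\d x$ on $J^1(B)$, the contact Hamiltonian is $\alpha$ applied to this tangent vector. The $z$-component of the derivative is
\begin{equation*}
\ud{}{s}\bigg|_{s=t_0}F_s(x(s),\eta(s)) = F_{t_0}'(x_0,\eta_0) + \bd_x F_{t_0}(x_0,\eta_0)\cdot x'(t_0) + \bd_\eta F_{t_0}(x_0,\eta_0)\cdot \eta'(t_0),
\end{equation*}
and the last term vanishes since $(x_0,\eta_0)\in \Sigma_{t_0}$. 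The $p\,\d x$ term contributes $\bd_x F_{t_0}(x_0,\eta_0)\cdot x'(t_0)$, which cancels exactly. Thus $\alpha$ applied to the tangent vector equals $F_{t_0}'(x_0,\eta_0)$, so the contact Hamiltonian satisfies $h_{t_0}(y)=F_{t_0}'(x_0,\eta_0)$.

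Since $(x_0,\eta_0)\in \Sigma_{t_0}$ was arbitrary, we conclude that the restriction of $F_t'$ to $\Sigma_t$ agrees (under the Legendrian correspondence) with the contact Hamiltonian $h_t$ on $\Lambda$, and therefore takes values in $[\min h_t,\max h_t]$. The main (minor) obstacle is the parametrization issue—since $\Sigma_t$ moves with $t$, one cannot naively differentiate $F_t(x,\eta)$ with fixed $(x,\eta)$ and expect it to parametrize motion along $\Lambda_t$; the cancellation in the $\alpha$ computation is precisely what makes the answer independent of the auxiliary choice of path $(x(s),\eta(s))$, reflecting the parametrization-independence of $h_t$.
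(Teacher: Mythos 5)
Your proposal is correct and takes essentially the same approach as the paper: pick a smooth path $(x(s),\eta(s))\in\Sigma_s$ through a given $(x_0,\eta_0)$, form the curve $s\mapsto\iota_s(x(s),\eta(s))\in\Lambda_s$, and compute $\alpha$ on its velocity to find that the $x'$-terms cancel and the $\eta'$-term vanishes, leaving exactly $F_{t_0}'(x_0,\eta_0)$. The only (cosmetic) difference is that the paper first reduces, by composing with a family of fiberwise diffeomorphisms, to the case where $\eta(t)$ is constant—whereas you note directly that the $\eta'(t_0)$ contribution already dies because $\bd_\eta F_{t_0}(x_0,\eta_0)=0$ on $\Sigma_{t_0}$, which is slightly more direct.
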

\begin{proof}
  Let $\varphi_{t,x}$ be a family of diffeomorphisms of the fiber space. It suffices to prove the result for $F_{t}(x,\varphi_{t,x}(\eta))$. The values of the contact Hamiltonian at time $t_{0}$ are the values of $\alpha_{\gamma(t_{0})}(\gamma'(t_{0}))$ for curves $\gamma(t)\in \Lambda_{t}$ (see \eqref{eq:contact_hamiltonian}). Pick a point $(x,\eta)$ in the fiberwise critical set $\Sigma_{t_{0}}$, and extend $(x(t),\eta(t))$ to a path for $t\in [t_{0},t_{0}+\epsilon)$. By picking $\varphi_{t,x}$ appropriately we may assume that $\eta(t)=\eta$ is independent of $t$.

  Let $j_{t}:B\times \R^{N}\to J^{1}(B)$ be the map returning the value and horizontal derivative (the restriction of $j_{t}$ to $\Sigma_{t}$ parametrizes $\Lambda_{t}$). It is easy to show that:
  \begin{equation*}
    j_{t}^{*}\alpha=\sum_{i}\pd{F_{t}}{\eta_{i}}\d\eta_{i}\text{ and }\alpha_{j_{t}(x,\eta)}(j_{t}'(x,\eta))=F'_{t}(x,\eta).
  \end{equation*}
  To compute the contact Hamiltonian, take the path $\gamma(t)=j_{t}(x(t),\eta)$. Then:
  \begin{equation*}
    \alpha_{\gamma(t)}(\gamma'(t))=F_{t}'(x(t),\eta)+(j_{t}^{*}\alpha)(\pd{x(t)}{t})=F_{t}'(x(t),\eta)+0.
  \end{equation*}
  Evaluating at $t=t_{0}$ shows that $F_{t_{0}}'(x,\eta)\in [\min h_{t_{0}},\max h_{t_{0}}]$, as desired.
\end{proof}

The next result concerns the fiberwise difference of two generating functions:
\begin{lemma}\label{lemma:GF2}
  Let $F_{1}(x,\eta_{1})$ and $F_{2}(x,\eta_{2})$ be two generating functions, and let $\Lambda_{1}$ and $\Lambda_{2}$ be the generated (immersed) Legendrians. Then the critical points of the fiberwise difference function:
  \begin{equation*}
    \Delta(x,\eta_{1},\eta_{2})=F_{1}(x,\eta_{1})-F_{2}(x,\eta_{2})
  \end{equation*}
  are in bijection with Reeb chords from $\Lambda_{1}$ to $\Lambda_{2}$ and the corresponding critical value is the action of the chord (including chords of zero and negative action). 
\end{lemma}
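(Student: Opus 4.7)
The plan is to unpack the critical point equations of $\Delta$ and recognize them as the defining equations of a Reeb chord in $J^{1}(B)$. Recall that the Reeb field of the standard contact form $\alpha = \d z - \sum p_{i}\d x_{i}$ on $J^{1}(B)$ is $\partial_{z}$, so a Reeb chord between $\Lambda_{1}$ and $\Lambda_{2}$ is a vertical segment, i.e., a pair of points $\ell_{1}\in \Lambda_{1}$, $\ell_{2}\in \Lambda_{2}$ with the same $(x,p)$-coordinates and (generally) differing $z$-coordinates; the action of such a chord is $z_{2}-z_{1}$ up to sign.

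First I will write down the three block critical-point equations for $\Delta$. The $\eta_{1}$ and $\eta_{2}$ equations say precisely that $(x,\eta_{1})\in \Sigma_{1}$ and $(x,\eta_{2})\in \Sigma_{2}$, where $\Sigma_{i}$ is the fiberwise critical set of $F_{i}$. The $x$-equation reads $\partial_{x}F_{1}(x,\eta_{1}) = \partial_{x}F_{2}(x,\eta_{2})$. Under the parametrizations $j_{i}:\Sigma_{i}\to J^{1}(B)$ sending $(x,\eta_{i})\mapsto (x,\partial_{x}F_{i}(x,\eta_{i}),F_{i}(x,\eta_{i}))$, this is exactly the condition that the two points $j_{1}(x,\eta_{1})$ and $j_{2}(x,\eta_{2})$ have equal $(x,p)$-coordinates, hence lie on a single Reeb chord.

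Conversely, given any Reeb chord from $\ell_{1}\in \Lambda_{1}$ to $\ell_{2}\in \Lambda_{2}$, the preimages $(x,\eta_{1})\in \Sigma_{1}$ and $(x,\eta_{2})\in \Sigma_{2}$ together form a critical point of $\Delta$ by reversing the above reasoning. This defines mutually inverse maps between the critical set $\mathrm{Crit}(\Delta)$ and the set of Reeb chords (where, if the $j_{i}$ are merely immersions, "Reeb chord" means a pair of sheets with matching $(x,p)$). Finally, the critical value is
\begin{equation*}
  \Delta(x,\eta_{1},\eta_{2}) = F_{1}(x,\eta_{1}) - F_{2}(x,\eta_{2}) = z_{1} - z_{2},
\end{equation*}
which, up to the fixed sign convention for chord action, is the action of the corresponding chord. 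Zero-action chords correspond to intersection points of $\Lambda_{1}$ and $\Lambda_{2}$, and negative-action chords to chords traversed in the opposite direction, so they appear naturally in the same bijection.

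There is no substantive obstacle here: the lemma is essentially a definition-chase, and the only minor point of care is bookkeeping the sign convention for action and noting that in the immersed setting the bijection is with pairs of sheets (so each transverse self-intersection contributes two critical points, one for each ordering), which matches the usual convention for Reeb chords of an immersed Legendrian. Everything else follows directly from the formulas $j_{i}^{*}\alpha = \sum_{k}(\partial F_{i}/\partial \eta_{i,k})\,\d\eta_{i,k}$ used in Lemma \ref{lemma:contact_ham_t_derivative}.
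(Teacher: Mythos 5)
Your proof is correct and unpacks precisely the "straightforward computation" that the paper's one-line proof alludes to: the $\eta_1,\eta_2$ critical equations place the point on the fiberwise critical loci, the $x$-equation matches the $(q,p)$-coordinates under the Lagrangian projection, and the critical value is the $z$-difference. The paper simply cites the special case $F_1=F_2$ from Sabloff--Traynor and leaves the rest to the reader, so your argument is the same route with the details filled in; your remark about the sign convention and the immersed-case bookkeeping is appropriate but not a substantive issue.
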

\begin{proof}
  This is a straightforward computation, recalling that Reeb chords are the intersection points of the Lagrangian projection, and the action is the difference in the $z$-values. See \cite[Proposition 3.1]{sabloff_traynor_obstructions} for the case when $F_{1}=F_{2}$.
\end{proof}

The next result shows that the Reeb chords from $\Lambda_{0}$ to $\Lambda_{t}$ are determined by a family of linear-at-infinity functions:
\begin{lemma}\label{lemma:main_3_prop}
  Let $\Lambda_{0}\subset J^{1}(B)$ be a Legendrian admitting a linear-at-infinity generating function and suppose that $\Lambda_{t}$ is an isotopy starting at $\Lambda_{0}$. There is a family of linear-at-infinity functions $K_{t}:B\times \R^{N}\to \R$ so that:
  \begin{enumerate}
  \item the critical points of $K_{t}$ are in bijection with the Reeb chords between $\Lambda_{0}$ and $\Lambda_{t}$ identifying the critical value with the action of the chord,
  \item the set of critical points of value $0$ of $K_{0}$ is a Morse-Bott submanifold diffeomorphic to $\Lambda_{0}$, and,
  \item if $h_{t}$ is the contact Hamiltonian for the isotopy, the time derivative $K_{t}'(x,\eta)$ lies in the interval $[\min h_{t},\max h_{t}]$ for all critical points $(x,\eta)$ of $K_{t}$.
  \end{enumerate}
\end{lemma}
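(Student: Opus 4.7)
The plan is to construct $K_t$ as a fiberwise difference. First I would apply Lemma \ref{lemma:GF1} to obtain a smooth family $F_t:B\times \R^M\to \R$ of linear-at-infinity generating functions for $\Lambda_t$, chosen so that $F_t=F_0$ off a compact set (hence with a common linear-at-infinity tail $\ell$). Then define
\[
K_t(x,\eta_1,\eta_2)=F_t(x,\eta_1)-F_0(x,\eta_2):B\times \R^{2M}\to \R.
\]
Its linear-at-infinity tail is $\ell(\eta_1)-\ell(\eta_2)\ne 0$. Strictly speaking the non-linear part $f_t(x,\eta_1)-f_0(x,\eta_2)$ is not compactly supported on the doubled fiber, but it is uniformly bounded and compactly supported in each fiber variable separately; if the strict definition is required, one may multiply by a cutoff outside a large ball containing the (compact) locus where all critical points occur.

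Property (i) then follows directly from Lemma \ref{lemma:GF2} applied with $F_1=F_t$, $F_2=F_0$: the critical points of $K_t$ are in bijection with Reeb chords between $\Lambda_0$ and $\Lambda_t$, with critical values equal to the actions.

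For property (ii), at $t=0$ we have $K_0(x,\eta_1,\eta_2)=F_0(x,\eta_1)-F_0(x,\eta_2)$, and the value-zero critical locus corresponds to zero-action chords of $\Lambda_0$. Since $\Lambda_0$ is embedded these are the identity chords, so the critical set is the diagonal $\{(x,\eta,\eta):(x,\eta)\in \Sigma_0\}\cong \Lambda_0$. To verify Morse-Bott nondegeneracy I would compute the Hessian of $K_0$ along the diagonal and show its kernel equals the tangent space of the diagonal. Writing $A=\partial^2_{\eta\eta} F_0$ and $B=\partial^2_{x\eta} F_0$ at a point of $\Sigma_0$, this reduces to the linear-algebra fact that $\ker A\cap \ker B^T=\{0\}$, which follows from the hypothesis that $0$ is a regular value of $\partial_\eta F_0$.

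For property (iii), since $F_0$ is time-independent we have $K_t'(x,\eta_1,\eta_2)=F_t'(x,\eta_1)$. At any critical point of $K_t$ the first factor satisfies $(x,\eta_1)\in \Sigma_t$, so Lemma \ref{lemma:contact_ham_t_derivative} yields $K_t'\in [\min h_t,\max h_t]$. The main obstacle is the linear-at-infinity technicality noted above, since the fiberwise difference of two linear-at-infinity functions is not compactly supported on the doubled fiber in the strict sense; this is routine to handle by a cutoff localized around the compact critical locus or by adopting a slightly more permissive ``bi-fiberwise'' convention standard in the generating family literature. All other steps are direct applications of Lemmas \ref{lemma:GF1}, \ref{lemma:GF2}, and \ref{lemma:contact_ham_t_derivative}.
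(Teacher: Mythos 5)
Your proposal matches the paper's proof step for step: apply Lemma \ref{lemma:GF1} to get the family $F_t$; form the fiberwise difference $\Delta_t(x,\eta_1,\eta_2)=F_t(x,\eta_1)-F_0(x,\eta_2)$; use Lemma \ref{lemma:GF2} for (i); use the action gap together with transversality of the fiberwise critical locus for (ii); and deduce (iii) from Lemma \ref{lemma:contact_ham_t_derivative}. Both you and the paper flag the same technical obstruction, namely that $\Delta_t$ fails to be linear-at-infinity because its nonlinear part is supported on the noncompact cross $(C\times\R^M)\cup(\R^M\times C)$ in the doubled fiber. Your direct Hessian computation for the Morse--Bott condition in (ii) is a fine substitute for the paper's citation of \cite[Lemma 3.3]{sabloff_traynor_slice}.

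Where you diverge from the paper is the repair of the linear-at-infinity defect, and here your proposal has a real gap. You propose multiplying the nonlinear part by a cutoff $\chi$ outside a large ball and assert this is ``routine,'' but this glosses over exactly the content that needs to be supplied: the transition annulus of $\chi$ could, a priori, contain new critical points, and one must rule this out. On the $\eta_2$-arm of the cross (say $|\eta_1|\le C$, $|\eta_2|\approx R$) one has $\partial_{\eta_2}K^\chi_t=g_t\,\partial_{\eta_2}\chi-\ell$, and since $g_t$ is uniformly bounded while $\|\partial\chi\|=O(1/R)$, taking $R$ large makes this nonvanishing; a symmetric estimate handles the $\eta_1$-arm. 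You also need $\chi$ to be $t$-independent (possible since the critical loci are uniformly compact in $t$) so that $\partial_t K^\chi_t=\chi\,F_t'(x,\eta_1)$ and your argument for (iii) remains valid. The paper avoids all of these verifications by instead composing $\Delta_t$ with a fiber-preserving diffeomorphism $\varphi_t$ built via a two-stage Moser argument (first a fiberwise $\psi_t$ making $\Delta_t\circ\psi_t=\Delta_0$ outside a compact set, then a fiberwise $\kappa$ making $\Delta_0\circ\kappa$ linear-at-infinity). Precomposition with a diffeomorphism automatically preserves critical sets, critical values, and Hessians, so properties (i)--(iii) transfer with no estimate at all. Your cutoff approach can be made to work, but as written it is incomplete, and the diffeomorphism approach is the cleaner route.
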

\begin{proof}
  Let $F_{t}$ be a family of linear-at-infinity generating functions for $\Lambda_{t}$ as furnished by Lemma \ref{lemma:GF1}. The fiberwise difference function:
  \begin{equation*}
    \Delta_{t}(x,\eta_{1},\eta_{2})=F_{t}(x,\eta_{1})-F_{0}(x,\eta_{2}),
  \end{equation*}
  agrees with $\Delta_{0}$ whenever $\eta_{1}$ lies outside of some compact set, and satisfies (i) by Lemma~\ref{lemma:GF2}. It is well-known that there is an action gap for Reeb chords between $\Lambda_{0}$ and itself, i.e., chords of sufficiently small action are constant chords, and hence the set of critical points of value $0$ is diffeomorphic to $\Lambda_{0}$. This critical manifold is Morse-Bott because $\bd_{\eta}F_{0}(x,\eta)=0$ is cut out transversally, see \cite[Lemma 3.3]{sabloff_traynor_slice} for more details. Property (iii) follows from Lemma \ref{lemma:contact_ham_t_derivative}.

  Unfortunately, $\Delta_{t}$ is not linear-at-infinity. Indeed, if $F_{t}$ is defined on $B\times \R^{M}$, then $\Delta_{t}$ could fail to be linear on $B\times (C\times \R^{M}\cup  \R^{M}\times C)$ where $C\subset \R^{M}$ is a compact set. Let $K_{t}:=\Delta_{t}\circ \varphi_{t}$ where $\varphi_{t}$ is fiber preserving diffeomorphism of $B\times \R^{N}$ (and $N=2M$). By picking $\varphi_{t}$ appropriately, we can arrange that $K_{t}$ is linear at infinity. Properties (i), (ii), and (iii) are unaffected by such a modification. One uses a Moser-type deformation argument. The first step is to prove there is a fiberwise $\psi_{t}$ so that $\Delta_{t}\circ \psi_{t}=\Delta_{0}$ outside a compact set; this holds if and only if the generator $X_{t}$ of $\psi_{t}$ satisfies:
  \begin{equation*}
    X_{t}\intprod \d\Delta_{t}+\Delta_{t}'=0\text{ outside a compact set}.
  \end{equation*}
  This can be solved for $X_{t}$. More precisely, we can pick $X_{t}=-\Delta_{t}'V$ where $V$ is a constant vector field, pointing in the $\eta_{2}$ directions, satisfying $V\intprod \d\Delta_{t}=1$ whenever $\eta_{2}$ lies outside a compact set (bearing in mind that $\Delta_{t}'=0$ when $\eta_{1}$ lies outside a compact set). The second step is to prove there is a fiberwise $\kappa$ so that $\Delta_{0}\circ \kappa$ is linear-at-infinity. This can be constructed in a similar fashion by finding $\kappa_{s}$ so that
  \begin{equation*}
    [(1-s)\Delta_{0}(x,\eta_{1},\eta_{2})+s(\ell(\eta_{1})-\ell(\eta_{2}))]\circ \kappa_{s}=\Delta_{0}\text{ outside a compact set}.
  \end{equation*}
  Then $\varphi_{t}=\psi_{t}\circ \kappa_{1}^{-1}$ ensures that $\Delta_{t}\circ \varphi_{t}$ is linear-at-infinity. 
\end{proof}

\subsubsection{Singular Legendrian diagram associated to a family of functions}
\label{sec:singular}

To any a family of functions $K_{t}:B\times \R^{N}\to \R$ we associate a singular Legendrian diagram in $J^{1}([0,1])$ as follows: let $S_{t}$ be the set of critical points of $K_{t}$, and let $S$ be the union of $S_{t}$ over $t\in [0,1]$. There is a canonical map $S\to J^{1}([0,1])$ whose $z,q,p$ coordinates are given by:
\begin{equation*}
  z(t,x,\eta)=K_{t}(x,\eta)\hspace{1cm}q(t,x,\eta)=t\hspace{1cm}p(t,x,\eta)=K_{t}'(t,x,\eta).
\end{equation*}
Indeed, this map extends to the whole space $[0,1]\times B\times \R^{N}$ and the extension satisfies $\d z-p\d q=0$ on $S$. Thus, if $S$ is cut transversally, the canonical map parametrizes an immersed Legendrian. Of course, this is just a special case of the generating function framework. Let us call the image of the canonical map the \emph{singular Legendrian diagram} associated to the family $K_{t}$, and denote it by $\mathscr{D}$. The front of the singular Legendrian diagram is the Cerf diagram of the family $K_{t}$, see \cite[\S5]{cerf_stratification}.

\begin{lemma}\label{lemma:cerf_ctnty}
  Let $K:[0,1]\times B\times \R^{N}\to \R$ be a family of linear-at-infinity functions. Then for any open set $U$ around the singular Legendrian diagram, there is a $C^{1}$-neighborhood of $K$ whose elements have singular Legendrian diagrams in $U$.
\end{lemma}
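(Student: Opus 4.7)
The plan is to use the linear-at-infinity condition to confine critical points to a compact region of $[0,1]\times B\times \R^{N}$, and then conclude by a continuity and compactness argument.

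Writing $K_{t}=f_{t}+\ell$ with $f_{t}$ compactly supported and $\ell\ne 0$, one has $\bd_{\eta}K_{t}=\ell$ outside a compact set $W\subset [0,1]\times B\times \R^{N}$, so the total fiberwise critical set $S$ is contained in some compact enlargement $W^{+}$ of $W$, and $\norm{\bd_{\eta}K_{t}}\ge \tfrac{1}{2}\norm{\ell}$ outside $W^{+}$. Writing $\Psi:(t,x,\eta)\mapsto (t,K_{t}(x,\eta),K_{t}'(x,\eta))$ for the canonical map, so $\mathscr{D}=\Psi(S)$, I would use that $\Psi^{-1}(U)$ is an open set containing the compact set $S$ to pick a relatively compact open neighborhood $V$ of $S$ with $\overline{V}\subset \Psi^{-1}(U)$, together with a constant $\delta>0$ satisfying $\norm{\bd_{\eta}K_{t}}\ge \delta$ on the compact set $W^{+}\setminus V$.

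The next step is to show that any $\tilde{K}$ sufficiently $C^{1}$-close to $K$ (in the natural topology on linear-at-infinity families, which keeps the linear part essentially fixed and controls the $1$-jet uniformly on a common compact region) has its fiberwise critical set $\tilde{S}$ contained in $V$. Indeed, if $\norm{\bd_{\eta}\tilde{K}_{t}-\bd_{\eta}K_{t}}<\min(\delta,\norm{\ell})/2$ everywhere, then critical points of $\tilde{K}$ are ruled out off $W^{+}$ (via the $\ell$ bound) and on $W^{+}\setminus V$ (via the $\delta$ bound). Consequently $\tilde{S}\subset V$ and is itself compact. To finish, I would exploit that $\tilde{\Psi}$ depends continuously on the $1$-jet of $\tilde{K}$: the restriction $\tilde{\Psi}|_{\overline{V}}$ converges to $\Psi|_{\overline{V}}$ in $C^{0}$ as $\tilde{K}\to K$ in $C^{1}$, and since $\Psi(\overline{V})$ is a compact subset of the open set $U$, this gives $\tilde{\Psi}(\overline{V})\subset U$ for all $\tilde{K}$ in a sufficiently small $C^{1}$-neighborhood of $K$, hence $\tilde{\Psi}(\tilde{S})\subset U$ as required.

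The main subtlety is not the geometry but a bookkeeping question: one must commit to a precise $C^{1}$-topology on linear-at-infinity families so that all four of the relevant estimates (the lower bound on $\bd_{\eta}\tilde{K}_{t}$ off $W^{+}$, the $\delta$ bound on $W^{+}\setminus V$, and the $C^{0}$-bounds on $\tilde{K}$ and $\tilde{K}'$ over $\overline{V}$) hold simultaneously. The most convenient choice is uniform $C^{1}$-closeness of $\tilde{K}-\ell$ to $K-\ell$ on a common fixed compact support region, which immediately yields all of the above.
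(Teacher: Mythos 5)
The paper itself declares this lemma ``elementary, left to the reader'' and gives only the hint that the singular Legendrian diagram is compact thanks to the linear-at-infinity hypothesis. Your write-up is exactly the kind of compactness-plus-continuity argument the author had in mind, and it is structured correctly: confine the critical locus $S$ to a fixed compactum using the linear tail, sandwich $S$ by a relatively compact open $V$ with $\overline{V}\subset\Psi^{-1}(U)$, rule out critical points of nearby $\tilde K$ outside $V$, and finish by uniform continuity of $\tilde\Psi$ on $\overline{V}$. So you have the right route.

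One inaccuracy to repair. You claim $\norm{\bd_{\eta}K_{t}}\ge\delta$ on $W^{+}\setminus V$, but this need not hold: $V$ is a neighborhood of $S=\set{\d_{(x,\eta)}K_{t}=0}$, the \emph{full} critical set, whereas $\bd_{\eta}K_{t}$ (the fiber derivative over $B$ alone) can vanish at points where $\bd_{x}K_{t}\ne 0$, i.e., at points of $W^{+}\setminus V$. For instance one can cook up a compactly supported $f$ so that $K_{t}=f+\ell$ equals $x$ on a region, making $\bd_{\eta}K_{t}=0$ there without any critical point. The correct statement is $\norm{\d_{(x,\eta)}K_{t}}\ge\delta$ on the compact set $W^{+}\setminus V$, which does follow since that set is disjoint from $S$. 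Your $C^{1}$-closeness hypothesis already controls $\bd_{x}(\tilde K-K)$ as well as $\bd_{\eta}(\tilde K-K)$, so the fix is immediate; but as written the $\delta$-bound is an unjustified (and in general false) claim. The two regimes then cleanly become: off $W$ the fiber derivative is the fixed nonzero $\ell$, and on $W^{+}\setminus V$ you invoke the lower bound on the full differential. Everything else is fine.
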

\begin{proof}
  This is elementary, and is left to the reader. Bear in mind that the singular Legendrian diagram is compact because of the linear-at-infinity hypothesis.
\end{proof}

The rest of the proof can be summarized as follows. Lemma \ref{lemma:main_3_prop} gives a family of linear-at-infinity functions $K_{t}$ whose critical points are in bijection with Reeb chords from $\Lambda_{0}$ to $\Lambda_{t}$. The singular Legendrian diagram $\mathscr{D}$ satisfies the constraint that:
\begin{equation*}
  \mathscr{D}\cap \set{q=t}\subset \set{p\in [\min h_{t},\max h_{t}]}.
\end{equation*}
In words, there is a bound on the $p$ coordinates of the singular Legendrian diagram. In the coming sections we will show that, if $\int_{0}^{1}\max h_{t}-\min h_{t}\d t$ is smaller than the longest bar appearing in the \emph{barcode} of $K_{0}$, then $\mathscr{D}\cap \set{q=1}$ will be non-empty, and therefore there will be at least one Reeb chord between $\Lambda_{0}$ and $\Lambda_{1}$.

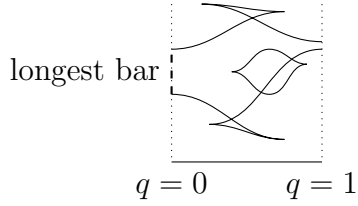
\begin{figure}[H]
  \centering
  \begin{tikzpicture}
    \draw (0,1.4) to[out=0,in=180] (1.5,0.8) to[out=180,in=0] (0.5,1) to[out=0,in=180] (2,2);
    \draw (0,2) to[out=0,in=180] (1.5,2.5) to[out=180,in=0] (0.4,2.6) to[out=0,in=180] (2,2.1);
    \draw[shift={(0.3,0.7)}] (0.5,1) to[out=0,in=180] (1,1.3) to[out=0,in=180] (1.5,1.1) to[out=180,in=0] (1,0.7) to[out=180,in=0] (0.5,1);
    \draw[dotted] (0,2.6)--(0,0.5) (2,2.6)--(2,0.5);
    \draw[thick,dashed] (0,1.4)--node[left]{longest bar}(0,2);
    \draw (0,0.5)node[below]{$q=0$}--(2,0.5)node[below]{$q=1$};
  \end{tikzpicture}
  \caption{The front projection of the singular Legendrian diagram $\mathscr{D}$ associated to a family of functions $K_t$. The $p$ coordinate is the slope of the front.}
  \label{fig:singular_leg_diagram}
\end{figure}

\subsubsection{Barcodes of linear-at-infinity functions}
\label{sec:barcodes}
Fix a family of linear-at-infinity functions $K_{t}$, defined on $B\times \R^{N}$, and suppose that $K_{t}(x,\eta)=\ell(\eta)$ outside some compact set for some $\ell\ne 0$, as in Lemma \ref{lemma:main_3_prop}. Choose $R$ large enough that $K_{t}(x,\eta)=\ell(\eta)$ when $K_{t}<-R$ and $K_{t}>R$. For each $t$, associate the \emph{persistence module} given by:
\begin{equation*}
  V_{t}(s)=H_{*}(K_{t}<s,K_{t}<-R;\Z/2).
\end{equation*}
The persistence maps $V_{t}(s_{1})\to V_{t}(s_{2})$ for $s_{1}\le s_{2}$ are induced by inclusions. It is clear that $V_{t}(s)=0$ for $s>R$, since the inclusion of $\ell<-R$ into $\ell<R$ is a deformation retract. 

The \emph{barcode} $B_{t}$ is the unique multiset of intervals $I_{1}^{t},\dots,I^{t}_{d}$ so that
\begin{equation*}
  V_{t}(s)=P(I_{1})\oplus \dots \oplus P(I_{d}),
\end{equation*}
where $P(I)$ is the trivial persistence module supported on the interval $I$ (equal to $\Z/2$ when $s\in I$, and $0$ otherwise). It is well-known that the endpoints of the bars in $B_{t}$ are critical values of $K_{t}$, and, if $K_{t}$ is Morse, then every critical value appears as an endpoint of a bar. Roughly speaking, we can think of $B_{t}$ as being a pairing of strands in the front of the singular Legendrian diagram $\mathscr{D}$; this is the perspective taken in \cite[\S12]{chekanov_pushkar}.

It is a fundamental fact that the barcode $B_{t}$ depends \emph{continuously} on $t$. This can be stated precisely by using the \emph{bottleneck distance}, see \cite{cohen_steiner_edelsbrunner_harer}. For our purposes, we require only the following statement. Let $L(t)$ be the length of the longest bar in $B_{t}$ (with $L(t)=0$ if there are no bars). Then for any $t_{0}\in (0,1)$:
\begin{equation}\label{eq:ctnty_barcode}
  \liminf_{t\to t_{0}-} L(t) = \liminf_{t\to t_{0}+}L(t).
\end{equation}
While the barcode can be defined for any family $K_{t}$, we prefer to perturb $K_{t}$ slightly to $\bar{K}_{t}$ so that the following property holds: there is finite set $0<t_{1}<\dots<t_{k}<1$ so that for $t\ne t_{j}$, $\bar{K}_{t}$ is a \emph{strong Morse function}. Here ``strong'' means that each critical value is attained at a single critical point.

As explained in \cite{barannikov, chekanov_pushkar, fuchs_rutherford, pushkar_tyomkin}, the critical values of a family of strong Morse function vary smoothly $c_{1}(t)<\dots<c_{M}(t)$, and, if $x_{1}(t),\dots,x_{M}(t)$ are the corresponding critical points, there is an upper triangular change of basis 
\begin{equation*}
  y_{j}=\sum_{i\le j} a_{ij}x_{i}\text{ so that }d y_{j}=y_{\pi(j)}\text{ or }d y_{\pi(j)}=y_{j},
\end{equation*}
for a unique fixed-point-free involution $\pi:\set{1,\dots,M}\to \set{1,\dots,M}$, and where $d$ is the Morse differential (with $\Z/2$ coefficients). This decomposition is known as the \emph{Barannikov decomposition}, and it is straightforward to show that $\set{c_{j}(t),c_{\pi(j)}(t)}$ form the endpoints of a bar in the barcode of $\bar{K}_{t}$.

Recall from Lemma \ref{lemma:main_3_prop} that $K_{0}$ has a compact Morse-Bott critical manifold of value $0$, and all the other critical values have absolute value at least the minimal positive action of a Reeb chord of $\Lambda_{0}$. Call this minimal positive action $A$, and pick $\epsilon$ much smaller than $A$.

The small perturbation $\bar{K}_{0}$ will have a collection of critical values $\epsilon$-close to zero, and the rest will be $\epsilon$-close to numbers at least $A$ in absolute value. Let us call the critical values of the first kind ``Type 1'' and the others ``Type 2.'' In the Barannikov decomposition of $\bar{K}_{0}$, there must be a pair consisting of a Type 1 and a Type 2 critical value, otherwise the Morse homology of the cobordism $\set{-2\epsilon\le K\le 2\epsilon}$ would vanish, contradicting the well-known fact that the homology is isomorphic to the homology of the Morse-Bott submanifold of critical value $0$ (potentially with a grading shift).

Thus $\bar{K}_{0}$ has a bar of length $A-2\epsilon$ in its barcode, say attained by $c_{i}(0)<c_{j}(0)$. For $t<t_{1}$ the interval $[c_{i}(t),c_{j}(t)]$ is in the barcode for $\bar{K}_{t}$.

Applying Lemma \ref{lemma:cerf_ctnty}, we assume the singular Legendrian diagram $\bar{D}$ for $\bar{K}_{t}$ satisfies $\bar{D}\cap \set{q=t}\subset \set{p\in (\min h_{t}-\epsilon,\max h_{t}+\epsilon)}$. Clearly $(z,q,p)=(c_{i}(t),t,c_{i}'(t))$ lies in the singular Legendrian diagram for $\bar{K}_{t}$, and similarly for $j$, thus:
\begin{equation*}
  \ud{}{t}(c_{j}(t)-c_{i}(t))> -(\max h_{t}-\min h_{t})-2\epsilon.
\end{equation*}
In particular, if $L(t)$ denotes the length of the longest bar, 
\begin{equation*}
  \liminf_{t\to t_{1}-}L(t)> A-2\epsilon-2\epsilon t_{1}-\int_{0}^{t_{1}}\max h_{t}-\min h_{t}\,\d t.
\end{equation*}
Applying \eqref{eq:ctnty_barcode}, we conclude that as $t\to t_{1}+$ there is a bar of a similar length (say up to $2^{-1}\epsilon$). Repeating the argument for each singular time $t_{p}$ (with $2^{-1}\epsilon$ replaced by $2^{-p}\epsilon$), it can be shown that:
\begin{equation*}
  L(1) > A-5\epsilon-\int_{0}^{1}\max h_{t}-\min h_{t}\, \d t.
\end{equation*}
Since $\epsilon$ was arbitrary, we conclude that \emph{if the oscillation energy of $\Lambda_{t}$ is less than $A$, then $L(1)>0$ and hence there is a Reeb chord from $\Lambda_{0}$ to $\Lambda_{1}$}. This completes the proof of Theorem \ref{theorem:generating_function}.

\subsection{Proof of Theorem \ref{theorem:immersion_disjoining}}
\label{sec:proof_immersion}
Observe that any loop $j:\R/\Z\to B(1+\epsilon)$ which bounds area $1$ can be lifted to a Legendrian in $Q(1+\epsilon)$. Indeed, if $j$ has area $1$, then there is a function $f:\R/\Z\to \R/\Z$ so that $f^{*}\d t=j^{*}\lambda$, and $(f,j)$ is the desired Legendrian lift.

Let $\Lambda_{0}$ be the Legendrian lift corresponding to an embedded loop $j$ with area $1$ inside of $B(1+\epsilon)$ (e.g., one could take a lift of $\bd B(1)$). The idea is to deform $j$ through the space of immersed loops bounding signed area $1$ according to Figure \ref{fig:1} and \ref{fig:2}. Basically, one does the local deformation in Figure \ref{fig:1} to add the loop of area $A_{3}$ in Figure \ref{fig:2}. For the second part of the process, one expands the loop of area $A_{3}$ while simultaneously contracting the piece of area $A_{1}$ in such a way which keeps the total area $1$. The net process brings the new loop slightly within the original loop, with as small oscillation energy as desired; see \S\ref{sec:interpretation} for more details. This completes the proof of Theorem \ref{theorem:immersion_disjoining}.

\subsubsection{Lagrangian interpretation of the oscillation energy}
\label{sec:interpretation}

As in the above argument, let $(f_{t},j_{t}):\R/\Z\to \R/\Z\times \C$ be a deformation through immersed Legendrians, i.e., $j_{t}^{*}\lambda=f_{t}^{*}\d\theta$ for all $t$. Suppose $f_{t}$ always has degree $1$. Since the area bounded by $j_{t}$ is constant, $j_{t}$ is an \emph{exact deformation} of Lagrangians, i.e., there exist $\R$-valued $F_{t}$ so that $\omega_{j_{t}(x)}(j_{t}'(x),\d j_{t,x}(-))=\d F_{t,x}(-)$.

One defines the \emph{Lagrangian} oscillation energy as $\int_{0}^{1}\text{max}_{x}F_{t}(x)-\min_{x} F_{t}(x)\d t$. It is a straightforward exercise in the calculus of differential forms to show that the Lagrangian oscillation energy agrees with the Legendrian oscillation energy of $(f_{t},j_{t})$.

For arbitrary exact deformations of Lagrangian curves (not necessarily loops), if we change $j_{t}$ by $\varphi \circ j_{t}$ where $\varphi:\C\to \C$ is a contracting map $\varphi^{*}\omega=c\omega$, $c<1$, then it is easy to show that the Lagrangian oscillation energy changes by a factor of $c$. This observation implies that we can do the local deformation in Figure \ref{fig:1} with as small energy as desired (i.e., do the process at a normal scale, and then compose with $\varphi$ with a very small $c$). The details are left to the reader.

\begin{figure}[H]
  \centering
  \begin{tikzpicture}
    \node [inner sep=0pt, outer sep =0pt] at (0,0) {\includegraphics[scale=0.17]{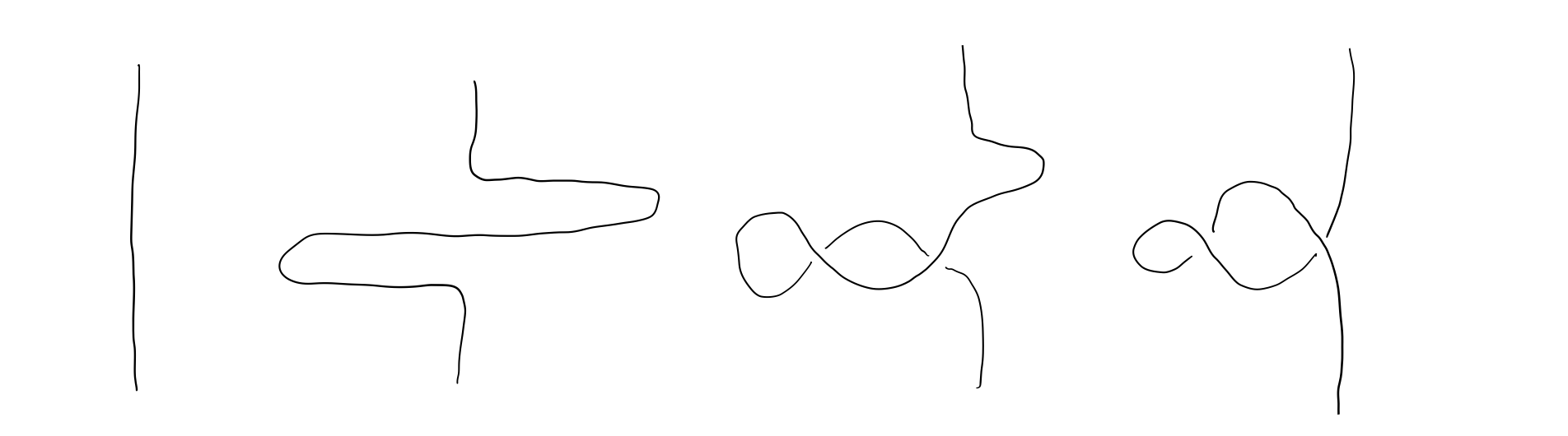}};
  \end{tikzpicture}
  \caption{A particular local model of an area preserving deformation of a curve in the plane. Note that the rightmost crossing \emph{changes its sign} in going from the third to the fourth stage.}
  \label{fig:1}
\end{figure}

\begin{figure}[H]
  \centering
  \begin{tikzpicture}
    \node at (0,0) [inner sep=0pt, outer sep =0pt] {\includegraphics[scale=0.17]{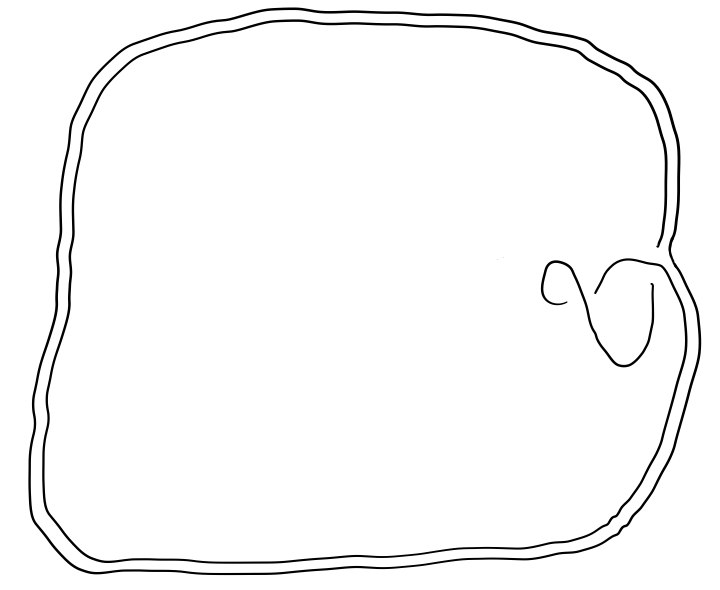}};
    \node (X0) at (2.5,0.8) [right] {$\phantom{A_{0}}$};
    \node (X1) at (X0.south west) [below right] {$A_{1}$};
    \node (X2) at (X1.south west) [below right] {$\phantom{A_{2}}$};
    \node (X3) at (X2.south west) [below right] {$A_{3}$};
    \draw [-Latex](X1) -- (1.2,0.6);
    \draw [-Latex](-2.3,-0.3) node[left]{$A_{2}$}-- (1.2,0.05);
    \draw [-Latex](X3) -- (1.5,-0.2);
  \end{tikzpicture}
  \caption{Isotoping a loop through immersed loops with area $1$. The process is area preserving provided that $A_{1}+2A_{3}-A_{2}$ equals the area of the original loop. By enlarging $A_{3}$ and shrinking $A_{1}$ we can bring the loop strictly inside the region bounded by the original loop.}
  \label{fig:2}
\end{figure}
\subsection{Proof of Proposition \ref{prop:rizell_sullivan}}
\label{sec:proof_conjecture}
This section is inspired by \cite{rizell_sullivan_non_squeezing,rizell_sullivan_private_communication}.

Observe that $\R/\Z\times B(1)$ with form $\d t-\lambda$ is contactomorphic\footnote{Note that the diffeomorphism $(t,z)\mapsto (t,\bar{z})$ is a contactomorphism between the two forms $\d t\pm \lambda$.} to the standard $S^{3}$ minus a Hopf circle. Indeed, $S^{3}$ is the prequantization of $S^{2}$ with an area form of total area $1$, and so a symplectomorphism $B(1)\to S^{2}\setminus \set{\mathrm{pt}}$ extends to a strict contactomorphism between the total spaces of the prequantization bundles.

A Legendrian lift $\Lambda_{2}$ of a double cover $\R/\Z\to \bd B(1/2)$ is a standard unknot ($\mathtt{tb}(\Lambda_{2})=-1$) in $S^{3}$. This can be arranged by picking the embedding $B(1)\to S^{2}$ so $\bd B(1/2)$ is sent to the equator, recalling that the standard unknot in $S^{3}$ projects to a double cover of an equator in $S^{2}$.

Suppose that $\Lambda_{1}$ is Legendrian isotopic to a knot $\Lambda$ which lies in $\R/\Z\times B(1)\subset S^{3}$. The difference $\mathtt{tb}(\Lambda)-\mathtt{tb}(\Lambda_{2})$ can be computed as in \cite[\S3]{rizell_sullivan_non_squeezing}: if $\Sigma$ is a Seifert surface for $\Lambda-\Lambda_{2}$, then the homological intersection number of the Reeb push-off $\rho_{t}(\Lambda+\Lambda_{2})$ with $\Sigma$, as $t\to 0+$, equals $\mathtt{tb}(\Lambda)-\mathtt{tb}(\Lambda_{2})$. Indeed, we only require $\Sigma$ to be a bordism class with boundary $\Lambda-\Lambda_{2}$. 

The advantage of this approach is that $\Lambda-\Lambda_{2}$ is null-bordant in $\R/\Z\times \C$ (rather than just in $S^{3}$). It follows from \cite[Lemma 3.1]{rizell_sullivan_non_squeezing} that:
\begin{equation}\label{eq:diff}
  \mathtt{tb}(\Lambda)-\mathtt{tb}(\Lambda_{2})=\text{intersection number of $\rho_{t}(\Lambda_{1}+\Lambda_{2})$ with $\Sigma$, $\bd\Sigma=\Lambda_{1}-\Lambda_{2}$.}
\end{equation}
We claim that this difference equals $+1$. Then $\mathtt{tb}(\Lambda_{2})=-1$ implies $\mathtt{tb}(\Lambda)=0$, contradicting Bennequin's inequality \cite{bennequin} stating that $\mathtt{tb}(\Lambda)<0$ for all Legendrian unknots in $S^{3}$.

To compute the right hand side of \eqref{eq:diff}, let $r_{1}=1/\sqrt{\pi}$, $r_{2}=1/(\sqrt{2\pi})$, and consider the map:
\begin{equation*}
  f(s,t)=(t,(1-s)r_{2}e^{4\pi it}+sr_{1}e^{2\pi it})\in \R/\Z\times \C.
\end{equation*}
It is straightforward to check that $f$ is an embedding and hence defines a Seifert surface with $\bd f=\Lambda_{1}-\Lambda_{2}$. Moreover, the \emph{radial push-off} $\Lambda_{i}^{\epsilon}=e^{\epsilon}\Lambda_{i}$ induces the same framing as the Reeb push-off, and hence the intersection number of $\Lambda_{1}^{\epsilon}+\Lambda_{2}^{\epsilon}$ with $f$ computes \eqref{eq:diff}.

Clearly $f(s,t)\not\in \Lambda_{1}^{\epsilon}$ for all $s,t$. On the other hand:
\begin{equation*}
  f(s,t)\in \Lambda_{2}^{\epsilon}\iff t=0\text{ and }(1-s)r_{2}+sr_{1}=e^{\epsilon}r_{2}.
\end{equation*}
With a bit of thought, one sees that $T\Lambda_{2}^{\epsilon},\bd_{s}f,\bd_{t}f$ forms a positively oriented frame (using the contact orientation), and hence the right hand side of \eqref{eq:diff} equals $+1$. As explained above, this contradicts Bennequin's inequality and concludes the proof.

\subsection{Proof of Theorem \ref{theorem:imm_dichotomy}}
\label{sec:imm_dichotomy_proof}

As mentioned in \S\ref{sec:dichotomy}, the proof reduces to establishing Lemma \ref{lemma:chekanov_leg}. The reader is encouraged to refer to \cite[pp.\ 609]{chekanov_2000}, as our argument is essentially a direct copy of his. The argument goes as follows: suppose that $\delta$ is a pseudo-metric on $\mathscr{E}_{k}$ and $\delta(\Lambda_{0},\Lambda_{\infty})= 0$ for $\Lambda_{0}\ne \Lambda_{\infty}$. The goal is to show that $\delta$ is identically zero.

Pick a $1$-jet tubular neighborhood around $\Lambda_{0}$. This neighborhood of $\Lambda_{0}$ is covered by a union of cubes $Q(\epsilon)$ defined by $\abs{q}<\epsilon,\abs{p}<\epsilon,\abs{z}<\epsilon^{2}$, using local canonical coordinates $q,p,z$.

Shrinking $\epsilon$ if necessary, $\Lambda_{\infty}$ is disjoint from one of these cubes $Q(\epsilon)$. If $\Lambda_{1}$ is the $1$-jet of a function $z=f(q)$, $p=\d f(q)$ where $f$ is $C^{1}$ small and supported in $\abs{q}<\epsilon$, then \emph{we can find an ambient contactomorphism $\phi$ so that $\phi(\Lambda_{0})=\Lambda_{1}$ and $\phi(\Lambda_{\infty})=\Lambda_{\infty}$}. The desired contactomorphism is an extension of the Legendrian isotopy obtained by taking $1$-jets of $tf$ as $t$ ranges from $[0,1]$, which we can take to be supported in $Q(\epsilon)$.

Using $\delta(\Lambda_{1},\Lambda_{\infty})\le C(\phi)\delta(\Lambda_{0},\Lambda_{\infty})=0$, the triangle inequality yields $\delta(\Lambda_{0},\Lambda_{1})=0$.

Fix this special cube $Q(\epsilon)$. By applying canonical transformations (diffeomorphisms of the base lifted to contactomorphisms of $1$-jet space), we can cover a neighborhood of $\Lambda_{0}$ by finitely many contactomorphic images of $Q(\epsilon)$. If $\Lambda_{1}$ is the $1$-jet of a $C^{1}$ small function supported in \emph{any} of these cubes, then we conclude that $\delta(\Lambda_{0},\Lambda_{1})=0$.

Let $\Lambda_{f}$ be denote the $1$-jet of $f$. Given two $C^{1}$ small functions $f_{1},f_{2}$, there is a contactomorphism $\phi$ taking $\Lambda_{0},\Lambda_{f_{1}}$ to $\Lambda_{f_{2}},\Lambda_{f_{2}+f_{1}}$. Therefore, if $f_{1},f_{2}$ are supported in the above cubes, we have:
\begin{equation*}
  \delta(\Lambda_{0},\Lambda_{f_{1}+f_{2}})\le \delta(\Lambda_{0},\Lambda_{f_{2}})+\delta(\Lambda_{f_{2}},\Lambda_{f_{1}+f_{2}})\le \delta(\Lambda_{0},\Lambda_{f_{2}})+C(\phi)\delta(\Lambda_{0},\Lambda_{f_{1}})=0.
\end{equation*}
By iterating this argument, we conclude that \emph{any $1$-jet of a $C^{1}$ small function} has zero $\delta$-distance to $\Lambda_{0}$. Since $\mathscr{E}_{k}$ is path connected (in the $C^{1}$ topology), we conclude that $\delta=0$ on $\set{\Lambda_{0}}\times \mathscr{E}_{k}$, which implies the desired result.

\bibliography{citations}
\bibliographystyle{alpha}

\end{document}